\begin{document}

\newcommand\lprime{l}
\newcommand\llprime{l'}
\newcommand\Mand{\ \text{and}\ }
\newcommand\Mor{\ \text{or}\ }
\newcommand\Mfor{\ \text{for}\ }
\newcommand\Real{\mathbb{R}}
\newcommand\RR{\mathbb{R}}
\newcommand\im{\operatorname{Im}}
\newcommand\re{\operatorname{Re}}
\newcommand\sign{\operatorname{sign}}
\newcommand\sphere{\mathbb{S}}
\newcommand\BB{\mathbb{B}}
\newcommand\HH{\mathbb{H}}
\newcommand\CH{\Cx\HH}
\newcommand\dS{\mathrm{dS}}
\newcommand\ZZ{\mathbb{Z}}
\newcommand\codim{\operatorname{codim}}
\newcommand\Sym{\operatorname{Sym}}
\newcommand\Ann{\operatorname{Ann}}
\newcommand\End{\operatorname{End}}
\newcommand\Span{\operatorname{span}}
\newcommand\Ran{\operatorname{Ran}}
\newcommand\ep{\epsilon}
\newcommand\Cinf{\cC^\infty}
\newcommand\dCinf{\dot \cC^\infty}
\newcommand\CI{\cC^\infty}
\newcommand\dCI{\dot \cC^\infty}
\newcommand\Cx{\mathbb{C}}
\newcommand\Nat{\mathbb{N}}
\newcommand\dist{\cC^{-\infty}}
\newcommand\ddist{\dot \cC^{-\infty}}
\newcommand\pa{\partial}
\newcommand\Card{\mathrm{Card}}
\renewcommand\Box{{\square}}
\newcommand\Ell{\mathrm{Ell}}
\newcommand\WF{\mathrm{WF}}
\newcommand\WFh{\mathrm{WF}_\semi}
\newcommand\WFb{\mathrm{WF}_\bl}
\newcommand\Vf{\mathcal{V}}
\newcommand\Vb{\mathcal{V}_\bl}
\newcommand\Vsc{\mathcal{V}_\scl}
\newcommand\Vz{\mathcal{V}_0}
\newcommand\Lb{L_{\bl}}
\newcommand\Hb{H_{\bl}}
\newcommand\Hbpm{H_{\bl,+-}}
\newcommand\Hbpmpm{H_{\bl,\pm\pm}}
\newcommand\Hbpp{H_{\bl,++}}
\newcommand\Hbmm{H_{\bl,--}}
\newcommand\Ker{\mathrm{Ker}}
\newcommand\Coker{\mathrm{Coker}}
\newcommand\Range{\mathrm{Ran}}
\newcommand\Hom{\mathrm{Hom}}
\newcommand\Id{\mathrm{Id}}
\newcommand\sgn{\operatorname{sgn}}
\newcommand\ff{\mathrm{ff}}
\newcommand\tf{\mathrm{tf}}
\newcommand\esssupp{\operatorname{esssupp}}
\newcommand\supp{\operatorname{supp}}
\newcommand\vol{\mathrm{vol}}
\newcommand\Diff{\mathrm{Diff}}
\newcommand\Diffsc{\mathrm{Diff}_\scl}
\newcommand\Diffb{\mathrm{Diff}_\bl}
\newcommand\DiffbI{\mathrm{Diff}_{\bl,I}}
\newcommand\Diffz{\mathrm{Diff}_0}
\newcommand\TbC{{}^{\bl,\Cx} T}\
\newcommand\Tb{{}^{\bl} T}
\newcommand\Tbc{{}^{\bl} \overline{T}}
\newcommand\Sb{{}^{\bl} S}
\newcommand\Tsc{{}^{\scl} T}
\newcommand\Ssc{{}^{\scl} S}
\newcommand\Nb{{}^{\bl} N}
\newcommand\SNb{{}^{\bl} SN}
\newcommand\Lambdab{{}^{\bl} \Lambda}
\newcommand\zT{{}^{0} T}
\newcommand\Tz{{}^{0} T}
\newcommand\zS{{}^{0} S}
\newcommand\dom{\mathcal{D}}
\newcommand\cA{\mathcal{A}}
\newcommand\cB{\mathcal{B}}
\newcommand\cE{\mathcal{E}}
\newcommand\cG{\mathcal{G}}
\newcommand\cH{\mathcal{H}}
\newcommand\cU{\mathcal{U}}
\newcommand\cO{\mathcal{O}}
\newcommand\cF{\mathcal{F}}
\newcommand\cM{\mathcal{M}}
\newcommand\cQ{\mathcal{Q}}
\newcommand\cR{\mathcal{R}}
\newcommand\cI{\mathcal{I}}
\newcommand\cL{\mathcal{L}}
\newcommand\cK{\mathcal{K}}
\newcommand\cC{\mathcal{C}}
\newcommand\cX{\mathcal{X}}
\newcommand\cY{\mathcal{Y}}
\newcommand\cP{\mathcal{P}}
\newcommand\cS{\mathcal{S}}
\newcommand\cZ{\mathcal{Z}}
\newcommand\cW{\mathcal{W}}
\newcommand\Ptil{\tilde P}
\newcommand\ptil{\tilde p}
\newcommand\chit{\tilde \chi}
\newcommand\yt{\tilde y}
\newcommand\zetat{\tilde \zeta}
\newcommand\xit{\tilde \xi}
\newcommand\taut{{\tilde \tau}}
\newcommand\phit{{\tilde \phi}}
\newcommand\mut{{\tilde \mu}}
\newcommand\sigmat{{\tilde \sigma}}
\newcommand\sigmah{\hat\sigma}
\newcommand\zetah{\hat\zeta}
\newcommand\etah{\hat\eta}
\newcommand\loc{\mathrm{loc}}
\newcommand\compl{\mathrm{comp}}
\newcommand\reg{\mathrm{reg}}
\newcommand\bl{{\mathrm b}}
\newcommand\scl{{\mathrm{sc}}}
\newcommand{\sH}{\mathsf{H}}
\newcommand{\cte}{\digamma}
\newcommand\cl{\operatorname{cl}}
\newcommand\Div{\operatorname{div}}
\newcommand\hilbert{\mathfrak{X}}

\newcommand\Hh{H_{\semi}}

\newcommand\bM{\bar M}

\newcommand\xib{{\underline{\xi}}}
\newcommand\etab{{\underline{\eta}}}
\newcommand\zetab{{\underline{\zeta}}}

\newcommand\xibh{{\underline{\hat \xi}}}
\newcommand\etabh{{\underline{\hat \eta}}}
\newcommand\zetabh{{\underline{\hat \zeta}}}

\newcommand\rhot{{\tilde\rho}}

\newcommand\hM{\hat M}

\newcommand\Op{\operatorname{Op}}
\newcommand\Oph{\operatorname{Op_{\semi}}}

\newcommand\innr{{\mathrm{inner}}}
\newcommand\outr{{\mathrm{outer}}}
\newcommand\full{{\mathrm{full}}}
\newcommand\semi{\hbar}

\newcommand\elliptic{\mathrm{ell}}
\newcommand\even{\mathrm{even}}

\newcommand\past{\mathrm{past}}
\newcommand\future{\mathrm{future}}

\newcommand{\sfs}{\mathsf{s}}
\newcommand{\sC}{\mathsf{C}}
\newcommand{\sK}{\mathsf{K}}
\newcommand{\Ham}{\mathsf{H}}
\newcommand{\Hamb}{\mathsf{H}_\bl}

\newcommand\utilde{\tilde u}
\newcommand\wtilde{\tilde w}

\newcommand\bbb{\overline{\mathbb{R}^n}}
\newcommand\scTstar{ {}^{\text{sc}} T^*}
\newcommand\scTstarc[1]{\overline{ {}^{\text{sc}} T^* #1}}
\newcommand\sw{\ell}
\newcommand\smo{k}
\newcommand{\q}{\mathsf{q}}

\newcommand{\sfl}{\mathsf{l}}
\newcommand{\sfr}{\mathsf{l}}
\newcommand \p {\partial}
\newcommand \absv [1]{\left \lvert #1 \right \rvert }
\newcommand \wt [1]{\widetilde{#1}}
\newcommand{\lp}{\left(}
\newcommand{\rp}{\right)}
\newcommand{\la}{\langle}
\newcommand{\ra}{\rangle}
\newcommand{\norm}[2][]{\left \| #2 \right \|_{#1} }
\newcommand \lra {\longrightarrow}
\newcommand{\set}[1]{\left\{ #1 \right\} }
\newcommand \specb {spec_\bl}
\newcommand \ov {\overline{u}}
\newcommand \notg {h}
\newcommand\cXpm{\mathcal{X}_{+-}}
\newcommand\cYpm{\mathcal{Y}_{+-}}
\newcommand\cXmp{\mathcal{X}_{-+}}
\newcommand\cYmp{\mathcal{Y}_{-+}}
\newcommand\rhob{\rho_{\base}}
\newcommand\rhof{\rho_{\fiber}}
\newcommand\SymDom {\p (\Tscc^* M)}
\newcommand\Tscc{{}^{\scl} \overline{T}}
\newcommand\mmuu{\gamma}

\theoremstyle{plain}
\newtheorem{definition}{Definition}
\newcommand*{\defeq}{\mathrel{\vcenter{\baselineskip0.5ex \lineskiplimit0pt

                     \hbox{\scriptsize.}\hbox{\scriptsize.}}}%
                     =}

\setcounter{secnumdepth}{3}
\newtheorem{lemma}{Lemma}[section]
\newtheorem{prop}[lemma]{Proposition}
\newtheorem{thm}[lemma]{Theorem}
\newtheorem{cor}[lemma]{Corollary}
\newtheorem{result}[lemma]{Result}
\newtheorem*{thm*}{Theorem}
\newtheorem*{prop*}{Proposition}
\newtheorem*{cor*}{Corollary}
\newtheorem*{conj*}{Conjecture}
\numberwithin{equation}{section}
\theoremstyle{remark}
\newtheorem{rem}[lemma]{Remark}
\newtheorem*{rem*}{Remark}
\theoremstyle{definition}
\newtheorem{Def}[lemma]{Definition}
\newtheorem*{Def*}{Definition}
\newcommand{\fiber}{\mathrm{fib}}
\newcommand{\base}{\mathrm{base}}

\newcommand{\mar}[1]{{\marginpar{\sffamily{\scriptsize #1}}}}
\newcommand\ah[1]{\mar{AH:#1}}
\newcommand{\jgr}[1]{{\mar{JGR:#1}}}
\newcommand{\js}[1]{{\mar{JS:#1}}}
\newcommand{\jy}[1]{{\mar{JY:#1}}}

\newcommand\ran{\mathop{\rm ran}}
\newcommand{\R}{\mathbb{R}}

\newcommand\ubar{\overline{u}}
\newcommand\ang[1]{\langle #1 \rangle}
\renewcommand\Im{\operatorname{Im}}
\renewcommand\Re{\operatorname{Re}}
\newcommand\Rnbar{\overline{\mathbb{R}^n}}

\renewcommand{\theenumi}{\roman{enumi}}
\renewcommand{\labelenumi}{(\theenumi)}

\title{Existence and asymptotics of nonlinear Helmholtz eigenfunctions}
\author{Jesse Gell-Redman}
\address{School of Mathematics and Statistics, University of Melbourne}
\email{jgell@unimelb.edu.au}
\author{Andrew Hassell}
\address{Mathematical Sciences Institute, Australian National University}
\email{Andrew.Hassell@anu.edu.au}
\author{Jacob Shapiro}
\address{Mathematical Sciences Institute, Australian National
  University}
\email{Jacob.Shapiro@anu.edu.au}
\author{Junyong Zhang}
\address{Department of Mathematics,  Beijing Institute of Technology and \\ Cardiff University, UK}
\email{zhang\_junyong@bit.edu.cn; ZhangJ107@cardiff.ac.uk}



\thanks{We thank Andr\'{a}s Vasy for helpful conversations. 
This work was supported by the Australian Research Council through grant DP180100589. J. Shapiro was supported by an AMS-Simons travel grant. J. Zhang was supported by NSFC Grants (11771041, 11831004) and H2020-MSCA-IF-2017(790623). }

\begin{abstract}
We prove the existence and asymptotic expansion of a large class of solutions to nonlinear Helmholtz equations of the form 
\begin{equation*}
(\Delta - \lambda^2) u = N[u], 
\end{equation*}
where $\Delta = -\sum_j \partial^2_j$ is the Laplacian on $\RR^n$ with
sign convention that it is positive as an operator, $\lambda$ is a
positive real number, and $N[u]$ is a nonlinear operator that is a
sum of monomials of degree $\geq p$ in $u$, $\ubar$ and their derivatives of order up to
two, for some $p \geq 2$. Nonlinear Helmholtz eigenfunctions with  $N[u]= \pm |u|^{p-1} u$ were first considered by Guti\'errez \cite{Gut}.  Such equations are of interest in part because, for certain nonlinearities $N[u]$, they furnish standing waves for nonlinear
evolution equations, that is, solutions that are time-harmonic. 

We show that, under the condition $(p-1)(n-1)/2 > 2$ and $\smo > (n-1)/2$, for every $f \in H^{\smo+2}(\mathbb{S}^{n-1})$ of sufficiently small norm, there is a nonlinear Helmholtz function taking the form 
\begin{equation*} 
u(r, \omega) = r^{-(n-1)/2} \Big( e^{-i\lambda r} f(\omega) + e^{+i\lambda r}
g(\omega) + O(r^{-\epsilon}) \Big), \text{ as } r \to \infty, \quad \epsilon > 0, 
\end{equation*}
for some $g \in H^{\smo}(\mathbb{S}^{n-1})$. Moreover, we prove the result in the general setting of asymptotically conic manifolds. The proof uses an elaboration of anisotropic Sobolev spaces $\mathcal{X}^{s, \sfr_\pm}$, $\mathcal{Y}^{s, \sfr_{\pm}}$, defined by Vasy \cite{va18},  between which the Helmholtz operator $\Delta - \lambda^2$ acts invertibly. These spaces have a variable spatial weight $\sfr_\pm$, varying in phase space and distinguising between the two `radial sets' corresponding to incoming oscillations, $e^{-i\lambda r}$, and outgoing oscillations $e^{+i\lambda r}$.  
Our spaces have, in addition, module regularity with respect to two different `test modules', and have  algebra (or pointwise multiplication) properties which allow us to treat nonlinearities $N[u]$ of the form specified above. 
\end{abstract}

\maketitle

\section{Introduction} 
In this article we prove the existence and asymptotic expansion of a large class of solutions to nonlinear Helmholtz equations of the form 
\begin{equation}\label{nonlinear-efn}
(\Delta - \lambda^2) u = N[u], 
\end{equation}
where $\Delta = -\sum_j \partial^2_j$ is the Laplacian on $\RR^n$ with the 
sign convention that it is positive as an operator, $\lambda$ is a
positive real number, and $N[u]$ is a nonlinear operator that is a
monomial in $u$, $\ubar$ and their derivatives of order up to
two. Such equations are of interest in part because, for certain nonlinearities $N[u]$, they furnish standing waves for nonlinear
evolution equations, that is, solutions that are time-harmonic. Indeed
this is the case whenever $N[e^{i\theta} u] = e^{i\theta} N[u]$, for
all $\theta \in \RR$. For example, if $N[u] = \alpha |u|^{2q} u$, then
$\Psi(z, t) = u(z) e^{i\lambda^2 t}$ solves the nonlinear
Schr\"odinger equation
\begin{equation}\label{NLS}
-i \partial_t \Psi = \Delta \Psi - \alpha |\Psi|^{2q} \Psi, 
\end{equation}
while if $N[u] = |\nabla u|^2 u$, then $v(z, t) = u(z) e^{i\lambda t}$ solves the nonlinear wave equation
\begin{equation}
(\partial_t^2  + \Delta) v = |\nabla v|^2 v. 
\end{equation}

%
%
%
%
%

In this article, we will study the existence and asymptotic behaviour of `small' solutions to 
equation \eqref{nonlinear-efn}. Moreover, we shall do this not just for the standard Laplacian on $\RR^n$ but for potential and/or metric perturbations of this Laplacian, and even more generally for the Laplacian on asymptotically conic manifolds. However, in this introduction we shall mostly discuss the flat Euclidean case, as our results are new even in this setting. 

Since the linearization of this equation at $u=0$ is just the standard Helmholtz equation, 
\begin{equation}\label{Helmholtz}
(\Delta - \lambda^2) u = 0,
\end{equation}
it is intuitively clear that `small' nonlinear eigenfunctions should
behave similarly to linear Helmholtz eigenfunctions. The structure
of these is well known.   The
space of Helmholtz eigenfunctions of polynomial growth is parametrized
by distributions on the `sphere at infinity', $\mathbb{S}^{n-1}$.  We shall only consider those eigenfunctions associated to smooth functions on the sphere at infinity. Given $f \in C^\infty(\mathbb{S}^{n-1})$, there is a unique Helmholtz eigenfunction satisfying (in standard polar coordinates, $r = |z|$, $\omega = z/|z|$)
\begin{equation}
(\Delta - \lambda^2) u_0 = 0, \quad u_0 = r^{-(n-1)/2} \Big( e^{-i\lambda r} f(\omega) + e^{+i\lambda r} g_0(\omega) + O(r^{-1}) \Big), \text{ as } \ r \to \infty, 
\label{linear-efn}\end{equation}
where $g_0 \in C^\infty(\mathbb{S}^{n-1})$ is determined by $f$. (In fact, in
the simple case of the flat Laplacian, $g_0(\omega) = i^{(n-1)/2}
f(-\omega)$, but in the presence of metric or potential perturbations,
$g_0$ is not so explicit, and is indeed related to the
scattering matrix of the perturbed operator.)  We call $f$ the
`incoming  data' or `incoming radiation pattern' for the eigenfunction
$u$, while $g_0$ is referred to as the `outgoing data' or `outgoing
radiation pattern'. It is an arbitrary choice whether to parametrize
eigenfunctions by their incoming or their outgoing data; each
determines the other. 

\subsection{Main results}

Our main result, at least as it applies to the flat Laplacian on
$\RR^n$, is that `small' nonlinear eigenfunctions can be parametrized
in a similar way.  We state our result first for the equation
\begin{equation}\label{nonlinear-efn-power}
(\Delta - \lambda^2) u = \alpha |u|^{2q} u.
\end{equation}

\begin{thm}[Main Theorem, Euclidean case]\label{thm:main1}
Let $q \in \mathbb{N}$, $p = 2q + 1$, and assume that
\begin{equation}\label{pn}
(p-1) \frac{n-1}{2} > 2. 
\end{equation}
Let $\smo$ be an integer greater than $(n-1)/2$. 
There exist  $\epsilon, \epsilon' > 0$
sufficiently small, such that 
for every $f \in H^{\smo+2}(\mathbb{S}^{n-1})$ with $\| f
\|_{H^{\smo+2}(\mathbb{S}^{n-1})} < \epsilon$, there is a solution $u$ to equation \eqref{nonlinear-efn-power}, satisfying
\begin{equation} \label{u-form}
u = r^{-(n-1)/2} \Big( e^{-i\lambda r} f(\omega) + e^{+i\lambda r}
g(\omega) + O(r^{-\epsilon'}) \Big), \text{ as } r \to \infty,
\end{equation}
for some $g \in H^{\smo}(\mathbb{S}^{n-1})$. 

Moreover, uniqueness holds in the
following sense. Fix a $C^\infty$ function $\chi(r)$ equal to zero for $r$ small and $1$ for $r$ large, and let $\sw = -1/2- \delta$ for any $\delta$ satisfying $0 < \delta \leq (4p)^{-1}$. 
Let $u_- = \chi(r) r^{-(n-1)/2} e^{-i \lambda r} f(\omega)$. Then given $f$ with $\| f
\|_{H^{\smo+2}(\mathbb{S}^{n-1})}$ sufficiently small,   there is exactly one nonlinear
eigenfunction $u$ of the form \eqref{u-form},  with the property that $u - u_-$ has small norm in the Hilbert
space $H^{2, \sw; 1, \smo}_+$  defined in \eqref{eq:double-module-sob}.  \end{thm}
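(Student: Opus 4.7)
The strategy is to reduce \eqref{nonlinear-efn-power} to a contraction mapping problem in the Hilbert space $H^{2,\sw;1,\smo}_+$. I would set $u = u_- + v$, where $u_-(r,\omega) = \chi(r)\, r^{-(n-1)/2} e^{-i\lambda r} f(\omega)$ is the prescribed incoming parametrix. Substituting into \eqref{nonlinear-efn-power} yields
\begin{equation*}
(\Delta - \lambda^2) v \;=\; -(\Delta - \lambda^2) u_- \;+\; \alpha\, |u_- + v|^{2q}(u_- + v).
\end{equation*}
Because $u_-$ already carries the oscillation $e^{-i\lambda r}$ with amplitude exactly $r^{-(n-1)/2}$, a direct stationary phase–type computation shows that $(\Delta - \lambda^2) u_-$ gains a power of $r^{-1}$ beyond the naive bound, and lies in the target space of the outgoing resolvent with norm controlled by $\|f\|_{H^{\smo+2}(\mathbb{S}^{n-1})}$.

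Next, I would invoke the invertibility theorem for $\Delta - \lambda^2$ of Vasy \cite{va18}, extended to the double–module variants developed in the paper, between the anisotropic spaces $\cX^{\bullet}_+$ and $\cY^{\bullet}_+$, where the subscript $+$ designates the outgoing radial set. Denoting this inverse by $R_+(\lambda)$, the equation becomes $v = T(v)$, where
\begin{equation*}
T(v) \;\defeq\; R_+(\lambda)\Big[ -(\Delta - \lambda^2) u_- + \alpha |u_- + v|^{2q}(u_- + v) \Big].
\end{equation*}
A Banach contraction argument on a small ball $\{ \|v\|_{H^{2,\sw;1,\smo}_+} \leq \eta \}$ will then produce a unique fixed point, provided $\|f\|_{H^{\smo+2}(\mathbb{S}^{n-1})}$ and $\eta$ are chosen small enough.

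The heart of the proof — and the main obstacle — is the nonlinear mapping estimate: the map $u \mapsto |u|^{2q} u$ must send $H^{2,\sw;1,\smo}_+$ into the target space on which $R_+(\lambda)$ is bounded, with a Lipschitz constant that is small when the input is small. Two features enter here. First, the module regularity order $\smo > (n-1)/2$ supplies an algebra property via Sobolev embedding on $\mathbb{S}^{n-1}$, so that products of $p = 2q+1$ factors preserve the module regularity with a favourable constant; this is where the assumption on $\smo$ is used. Second, the decay condition $(p-1)(n-1)/2 > 2$ ensures that raising a function of $b$-weight $\sw = -1/2 - \delta$ to the $p$-th power produces extra spatial decay of at least $(p-1)(n-1)/2 > 2$ powers of $r^{-1}$, which is exactly what is needed to land in the target space of $R_+(\lambda)$, since passing from source to target costs two $b$-weights. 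Making this rigorous for the double-module spaces — in particular controlling the interaction between the two test modules, which encode incoming and outgoing oscillatory behaviour separately — is the technical core of the argument, and rests on the algebra and pointwise multiplication properties advertised in the abstract.

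Finally, once a fixed point $v \in H^{2,\sw;1,\smo}_+$ exists, the asymptotic expansion \eqref{u-form} is read off from the structure of elements of this space: the subscript $+$ confines the oscillatory behaviour of $v$ at infinity to the outgoing radial set, forcing a leading term of the form $r^{-(n-1)/2} e^{+i\lambda r} g(\omega)$ with $g \in H^{\smo}(\mathbb{S}^{n-1})$, while the extra $b$-decay $\delta > 0$ below the threshold weight $-1/2$ converts into a remainder of order $r^{-(n-1)/2-\epsilon'}$ for some $\epsilon' > 0$ depending on $\delta$ and the module regularity. Combined with the explicit incoming asymptotic of $u_-$, this yields the asserted form of $u$, and uniqueness in the small ball is immediate from the Banach fixed point theorem.
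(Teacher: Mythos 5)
Your decomposition $u = u_- + v$ and the fixed-point map $T$ are literally identical to the paper's $\Phi$: the paper writes $\Phi(w) = u_+ + R(\lambda+i0)N[u_-+w]$ with $u_+ := u_0 - u_-$, but then shows $u_+ = -R(\lambda+i0)(Pu_-)$, so your streamlined $T$, which bypasses the explicit construction of the linear eigenfunction $u_0$, is the same operator. The roles you assign to the hypotheses $\smo > (n-1)/2$ (for the algebra property), $(p-1)(n-1)/2 > 2$ (to recover the lost order of decay after applying the resolvent), and the small choice of $\delta$ are also exactly as in the paper, as is the Banach fixed-point step.

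There is, however, one genuine gap: the claim that the asymptotic expansion \eqref{u-form} is ``read off from the structure of elements of'' $H^{2,\sw;1,\smo}_+$. This is not true. The space is $L^2$-based with $\kappa=1$ and spatial weight $\sw<-1/2$; a generic element of it has no pointwise leading coefficient $g\in H^{\smo}(\mathbb{S}^{n-1})$ at $\mathcal{R}_+$, let alone a remainder $O(r^{-\epsilon'})$. What produces the expansion is the equation together with \emph{excess} decay of the source: the multiplicative estimate actually puts $N[u]$ in $H_+^{0,\,3/4;\,1,\smo}$, strictly better than the threshold $H_+^{0,\,\sw+1;1,\smo}$ needed to run the contraction. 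The paper then proves a separate lemma (the one following Proposition \ref{prop:expansions}): one splits $w = Bw + (\Id-B)w$ with a microlocal cutoff $B$ near $\mathcal{R}_+$, factors $P = (D_r+\lambda)\big(D_r-\lambda - \tfrac{i(n-1)}{2r}\big) + \dots$, inverts the microlocally elliptic factor $D_r+\lambda$ on $\WF'(B)$, and integrates the resulting first-order ODE in $r$ to obtain the limit $g$ in $H^{\smo}(\partial M)$ and the rate $O(r^{-\epsilon'})$; the piece $(\Id-B)w$ contributes nothing because $\mathcal{M}_+$ is elliptic away from $\mathcal{R}_+$. You cannot avoid some such argument using the PDE; membership in the solution space alone does not give \eqref{u-form}.
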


\begin{rem}
The solution is a scattering type solution, not a $L^2$ solution. From the Pohozaev identity, it is known that the sign of $\alpha$ plays an important role in the existence of finite energy solution to \eqref{nonlinear-efn-power},
while the sign of $\alpha$ plays no role in Theorem \ref{thm:main1}.
\end{rem}

\begin{rem}
As mentioned above, $\Psi(z, t) = u(z) e^{i\lambda^2 t}$ is a global-in-time solution which solves \eqref{NLS} but it is time-periodic without any decay.
This is quite different from the classical finite-energy solution to \eqref{NLS}.
\end{rem}

Our proof of Theorem~\ref{thm:main1} principally makes use of the
asymptotically conic structure of $\mathbb{R}^n$ near infinity; in 
particular it uses neither the translation symmetries of
$\mathbb{R}^n$ nor exact formulae for resolvent kernels.  The more
general version of our main result is valid in the setting
of asymptotically conic manifolds.
To prepare for the definition of such spaces, let us recall that, given a compact Riemannian manifold $(\boldsymbol N, g_N)$, the metric cone over $\boldsymbol N$ is the Riemannian manifold $\boldsymbol N \times (0, \infty)_r$ with metric of the form $dr^2 + r^2 g_N$. This space is incomplete as $r \to 0$; it can be completed topologically by adding a single point at $r=0$ (the `cone point'), but it is usually not a manifold then; and if it is, the metric is usually singular at $r=0$. The sole exception is when $\boldsymbol N$ is the sphere $\mathbb{S}^{n-1}$ with its standard metric, in which case the cone over $(\boldsymbol N,g_N)$ is $\RR^n$ minus the origin, and adding the cone point recovers the missing point. 

We define an \emph{asymptotically conic manifold} to be the interior
$M^\circ$ of a compact manifold with boundary $M$, with Riemannian
metric $g$ taking a particular form near the boundary. To specify
this, let $x$ be a boundary defining function for $\partial M$ (that
is, the boundary $\partial M$ is given by $x=0$, where $x$ vanishes to
first order at $\partial M$ and $x > 0$ on $M^\circ$) and let $y =
(y_1, \dots, y_{n-1})$ be local coordinates on $\partial M$ extended
to a collar neighbourhood $\{ x \leq c \}$ of the
boundary, where $c > 0$ is some small fixed positive number. 
We assume that the metric $g$ has the property that, near any point on $\partial M$, there are coordinates $(x, y_1, \dots, y_{n-1})$ as above such that, in  this coordinate patch, $g$ takes the
form
\begin{equation}\label{ac-metric}
g = \frac{dx^2}{x^4} + \frac{h(x, y, dy)}{x^2} ,
\end{equation}
where $h$ is a smooth $(0,2)$-tensor that restricts to a metric on $\partial M$. 
This definition is better understood by passing to the variable $r = 1/x$, which goes to infinity at the boundary of $M$. The metric then takes the form 
\begin{equation}\label{ac-metric2}
g = dr^2 + r^2 h(\frac1{r}, y, dy). 
\end{equation}
If $h$ is independent of $x$ for small $x$, then this is precisely a
conic metric for large $r$, where $k = h(0, y, dy)$.  More generally,
it is asymptotic to this conic metric (smoothness of $h$ in $x$ is
equivalent to having an asymptotic expansion in powers of $1/r$ as $r
\to \infty$.)  In particular, the metric is always
complete, as the
boundary is at $r = \infty$ which is infinite distance from any
interior point. Thus, we can think of an asymptotically conic manifold
as a complete noncompact Riemannian manifold that is asymptotic, at
infinity, to the `large end of a cone', but having no conic
singularity (as a true cone usually does at $r=0$). Such spaces have
curvature tending to zero at infinity, and local injectivity radius
tending to infinity, so balls of a fixed size are asymptotically
Euclidean as their centre tends to infinity. For this reason, they are
sometimes called `asymptotically Euclidean spaces', although
`asymptotically locally Euclidean' would perhaps be a better term, as
the global structure of `infinity', that is, the boundary $\partial M$
of the compactification $M$, can be quite different from $\mathbb{S}^{n-1}$.

Particular instances of asymptotically conic manifolds include flat Euclidean space, or any compact metric perturbation of the flat metric on Euclidean space. In this case, $M$ is the radial compactification of $\RR^n$, given by the union of $\RR^n$ with the `sphere at infinity', $\mathbb{S}^{n-1}$. Connected sums of such manifolds are also asymptotically conic. The topology and geodesic dynamics on such manifolds can be intricate. For example, any convex co-compact hyperbolic manifold can have its metric modified near infinity to be asymptotically conic; while this is an artificial construction, it provides a very large class of asymptotically conic spaces with complicated topology and hyperbolic trapped set. 

\begin{thm}[Main Theorem, asymptotically conic case]\label{thm:main2}
Let $(M^\circ, g)$ be an asymptotically conic manifold of dimension
$n$, and let $V$ be a smooth function on $M$ vanishing to second order
at $\partial M$ (that is, in the `noncompact' picture, $V$ is
$O(r^{-2})$ as $r \to \infty$ with an expansion at infinity in
negative powers of $r$, obeying symbolic derivative estimates). Let $H
= \Delta_g + V$ where $\Delta_g$ is the Laplace-Beltrami operator on
$(M^\circ, g)$. Let $N(u, \ubar, \nabla u, \nabla \ubar, \nabla^{(2)}
u, \nabla^{(2)} \ubar)$ be a sum of monomial terms, each of which has
degree not less than $p$ in
$u$ and $\ubar$ and their derivatives up to order two, with
coefficients smooth on $M$, and assume that $p$ satisfies \eqref{pn}.
Let $\smo$ be an integer greater than $(n-1)/2$. There exist $\epsilon, \epsilon' > 0$
sufficiently small, such that 
for every $f \in H^{\smo+2}(\partial M)$ with $\| f
\|_{H^{\smo+2}(\partial M)} < \epsilon$,  there is a function $u$ on $M^\circ$ satisfying 
\begin{equation*}
(H - \lambda^2) u = N(u, \ubar, \nabla u, \nabla \ubar, \nabla^{(2)} u, \nabla^{(2)} \ubar)
\end{equation*}
with asymptotics 
\begin{equation}
u = r^{-(n-1)/2} \Big( e^{-i\lambda r} f(\omega) + e^{+i\lambda r} g(\omega) + O(r^{-\epsilon'}) \Big), \text{ as } r \to \infty, 
\label{nonlinear-efn-main2}\end{equation}
for some $g \in H^{\smo}(\partial M)$. Moreover, uniqueness holds in same sense as in Theorem~\ref{thm:main1}. \end{thm}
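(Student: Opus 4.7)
The plan is to reduce the problem to a Banach fixed-point argument on the doubly module-regular refinement of Vasy's anisotropic Sobolev spaces $\cX^{\sfs,\sfr_+;1,\smo}_+$. I would begin by constructing a model incoming wave that encodes the boundary data $f$: let $u_- := \chi(r) r^{-(n-1)/2} e^{-i\lambda r} f(\omega)$ with $\chi$ as in Theorem~\ref{thm:main1}, and then look for the solution in the form $u = u_- + v$. Since $r^{-(n-1)/2} e^{-i\lambda r}$ is an exact incoming half-density eigenfunction of the model conic Laplacian at infinity, one checks that the forcing term $F_0 := (H-\lambda^2) u_-$ gains a full power of $x = 1/r$ over a generic outgoing function and satisfies $\|F_0\|_{\cY^{\sfs-1,\sfr_+-1;1,\smo}_+} \lesssim \|f\|_{H^{\smo+2}(\partial M)}$. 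The equation for $v$ then reads
\[
(H-\lambda^2) v = -F_0 + N(u_- + v, \ubar_- + \ov, \nabla u_- + \nabla v, \ldots).
\]

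The central ingredient is the invertibility of $H - \lambda^2$ between the outgoing Vasy-type spaces, i.e.\ the limiting-absorption statement for this operator. I would invoke (or derive by propagating module regularity in the linear theory) that
\[
H - \lambda^2 \;:\; \cX^{\sfs,\sfr_+;1,\smo}_+ \;\longrightarrow\; \cY^{\sfs-1,\sfr_+-1;1,\smo}_+
\]
is an isomorphism. The $+$ subscript encodes the outgoing Sommerfeld radiation condition microlocally, as radial-point regularity at the outgoing radial set only, while incoming singularities (in particular those carried by $u_-$) are permitted. The variable spatial weight $\sfr_+$ is chosen as $\sw = -\tfrac12 - \delta$ with $0 < \delta \leq (4p)^{-1}$, just below the radial-point threshold, so that the radial propagation estimate closes in both directions. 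Let $G$ denote the inverse. The problem becomes the fixed-point equation $v = \Phi(v) := G\bigl(-F_0 + N(u_- + v, \ldots)\bigr)$ on a small closed ball $B_\eta \subset \cX^{\sfs,\sfr_+;1,\smo}_+$.

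To show $\Phi$ contracts $B_\eta$ for $\eta \sim \epsilon$, the decisive tool is the algebra/multiplication property of these doubly module-regular spaces. Since $\smo > (n-1)/2$, Sobolev embedding on the boundary sphere combines with two orders of module regularity (with respect to the two test modules referenced in the abstract) to give schematically
\[
\|w_1 w_2\|_{\cY^{\sfs-1,\sfr_+-1;1,\smo}_+} \;\lesssim\; \|w_1\|_{\cX^{\sfs,\sfr_+;1,\smo}_+} \|w_2\|_{\cX^{\sfs,\sfr_+;1,\smo}_+},
\]
including for products involving up to two derivatives of each factor, with coefficients that are smooth on $M$ causing no harm. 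The decay gain by one power of $x$ on the target side is supplied by the $p$-fold product in $N$ precisely because $(p-1)(n-1)/2 > 2$, which is condition \eqref{pn}. Iterating yields $\|N[u_-+v]\|_\cY \lesssim (\|u_-\|_\cX + \|v\|_\cX)^p$ and the analogous Lipschitz bound $\|N[w_1]-N[w_2]\|_\cY \lesssim (\|w_1\|_\cX+\|w_2\|_\cX)^{p-1}\|w_1-w_2\|_\cX$. Combined with boundedness of $G$ and $\|F_0\|_\cY \lesssim \epsilon$, this gives $\|\Phi(v)\|_\cX \leq C\epsilon + C\eta^p$ and $\|\Phi(v_1)-\Phi(v_2)\|_\cX \leq C\eta^{p-1}\|v_1-v_2\|_\cX$; taking $\eta = 2C\epsilon$ and $\epsilon$ sufficiently small makes $\Phi$ a contraction on $B_\eta$ with a unique fixed point, yielding both existence and the uniqueness clause.

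Finally, I would read off the asymptotics \eqref{nonlinear-efn-main2} from the function-space membership of $v$. Since $v$ lies in the outgoing space, no additional incoming component beyond $u_-$ can survive, and propagation at the outgoing radial set (equivalently, an outgoing parametrix construction) combined with module regularity of order $\smo$ peels off a leading outgoing piece $r^{-(n-1)/2} e^{i\lambda r} g(\omega)$ with $g \in H^{\smo}(\partial M)$, leaving a remainder of size $O(r^{-(n-1)/2 - \epsilon'})$ for some $\epsilon' > 0$ controlled by the gap between $\sfr_+$ and the threshold $-1/2$. I expect the hard part to be establishing the double-module algebra estimate above, since one must track how the two test modules interact with the radial-point microlocal structure and still obtain a product law that gains enough decay to absorb the order loss $\sfr_+ \to \sfr_+ - 1$ incurred by $G$; it is precisely this estimate that forces the dimensional hypothesis \eqref{pn} and drives the design of the function spaces.
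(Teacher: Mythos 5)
Your proposal follows the same structural skeleton as the paper's proof: write $u = u_- + v$ with $u_- = \chi\, r^{-(n-1)/2}e^{-i\lambda r}f$ carrying the incoming data, invoke the invertibility of $H-\lambda^2$ between module-regular Vasy-type spaces, use the algebra/multiplication property of those spaces together with condition \eqref{pn} to gain the spatial decay lost by the resolvent, run a Banach fixed-point argument, and then extract the expansion from the function-space membership of the fixed point. The idea that $u_-$'s incoming oscillation is annihilated by $\mathcal{M}_-$ (making $F_0 = Pu_-$ well-decaying), and that $v$ lives in the $+$-space where no incoming component survives, is exactly the mechanism used in the paper. The last paragraph correctly identifies the asymptotics as coming from the microlocal structure at $\mathcal{R}_+$; the paper executes this by factoring $P$ through $D_r\pm\lambda$, inverting one factor microlocally, and integrating in $r$.

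That said, you have the function-space indices wrong in a way that matters. The map should be
$$
H - \lambda^2 : \mathcal{X}^{s,\sw;\,1,\smo}_+ \longrightarrow \mathcal{Y}^{s-2,\,\sw+1;\,1,\smo}_+ ,
$$
not $\cY^{s-1,\sfr_+-1;1,\smo}_+$. The differential order drops by \emph{two} because $P$ is second order, and the spatial weight on the target goes \emph{up} by one (higher weight means more decay in this paper's convention: $f\in H^{s,\sw}$ iff $\ang{D}^s\ang{z}^\sw f\in L^2$). Consequently the resolvent $R(\lambda+i0)$ gains two derivative orders but \emph{loses} one order of decay; the product estimate must then gain at least one order of decay. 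Your exposition has this sign flipped, which, if taken at face value, would wrongly suggest the resolvent improves decay. Relatedly, the module-regular spaces $H^{s,\sw;\kappa,\smo}_\pm$ in the paper use a \emph{constant} spatial weight $\sw=-\frac12-\delta$, not the variable order $\sfr_+$; the variable-order spaces $H^{s,\sfr_\pm}$ appear only in the intermediate Fredholm theory, and the containment $H^{s,\sw;\kappa,\smo}_+\subset H^{s,\sfr_+}$ (Lemma~\ref{thm:containment-mod-var}) is what ties them together. Finally, the multiplication estimate in the paper (Corollary~\ref{cor:prod}) incurs a loss of $\kappa$ in the spatial weight when changing between the modules $\mathcal{M}_{\mmuu_j}$; this is precisely why the proof works at $\kappa=1$, and your schematic algebra estimate glosses over this point. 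These are not just typos: the balance $\sw+1 \leq p\sw + (p-1)n/2 - \kappa$ (yielding \eqref{pn} and the constraint $\delta \leq (4p)^{-1}$) depends on getting all three of these shifts exactly right.
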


\begin{rem}\label{rem:clarify} We first clarify the meaning of a ``monomial of degree not less than $p$ in $u$ and $\ubar$ and their derivatives up to order two, with coefficients smooth on $M$''. 
These derivatives are understood to be taken with respect to a frame of vector fields that are uniformly bounded with respect to the metric $g$. Thus, as $r \to \infty$ we could take $\partial_r$ and $r^{-1} \partial_{y_j}$, for example; these are the natural analogues of the gradient in the Euclidean sense, written with respect to polar coordinates. For example, if $p=3$, then on Euclidean $\RR^n$ the nonlinear term $N$ could take the form $$|u|^2 u  + |\nabla u|^2 u + |\nabla^{(2)} u|^2 u + \frac{\partial^2 u}{\partial z_1^2} \frac{\partial u}{\partial z_2} \ubar^2 + u^5.$$ 
\end{rem}

\begin{rem}The first result along the lines of Theorem~\ref{thm:main1} was obtained by Guti\'errez \cite{Gut}. Curiously, the set of pairs $(n, p)$ treated in that paper is almost disjoint to ours: it covers the case $n=3, 4$ and $p=3$ for example, but higher $n$ and $p$ are excluded, while our method works most easily with large $n$ and $p$. In fact, in view of the condition \eqref{pn} in our two theorems we can treat $p \geq 6$ when $n=2$, $p \geq 4$ when $n=3$, $p \geq 3$ when $n =4, 5$ and $p \geq 2$ for $n \geq 6$.  We discuss previous literature more fully below. 
\end{rem}

\subsection{Strategy of the proof}\label{subsec:strategy}

The basic strategy of our proof of Theorem~\ref{thm:main1} is  fixed point argument which is similar to \cite{Gut}. Given incoming data $f$, Guti\'errez formed the linear eigenfunction $u_0$ and showed that the map
\begin{equation}\label{Phi1}
\Phi : u \mapsto u_0 + (\Delta - (\lambda + i0)^2)^{-1} \alpha |u|^{p-1} u 
\end{equation}
is a contraction map on some Banach space, provided that the norm of $u$ is sufficiently small. 
Guti\'errez used $L^q$ spaces, for example $L^4$ when $p=3$ and $n = 3, 4$. Given $u \in L^4$, it is clear that the cubic term $|u|^2 u$ lies in $L^{4/3}$, while uniform resolvent bounds of Kenig-Ruiz-Sogge \cite{KRS} and the extension restriction estimates of Stein-Tomas \cite{Tomas} are used to show that the outgoing resolvent maps $L^{4/3}$ back to $L^4$. The fixed point of $\Phi$ is a nonlinear eigenfunction, as one sees by applying $\Delta - \lambda^2$ to both sides, and it has the same incoming data as $u_0$. 

In our approach, we use polynomially weighted $L^2$-based Sobolev spaces, with an aniso\-tropic weight. 
Vasy
\cite{VD2013} has shown how to construct two families of Hilbert
spaces  between which $\Delta - \lambda^2$ maps \textit{as a bounded
  invertible operator}: 
\begin{equation}
    \Delta - \lambda^2 \colon \mathcal{X}^{s,\mathsf{l}_\pm} \lra
    \mathcal{Y}^{s-2,\mathsf{l}_\pm+1}.\label{eq:first-invertible}
\end{equation}

In \eqref{eq:first-invertible}, the space $\mathcal{Y}^{s,\mathsf{l}_\pm} = H^{s,\mathsf{l}_\pm}$ is a
variable order $L^2$-based Sobolev space. The index $s \in \RR$ is a regularity parameter,  specifying how many derivatives are locally in $L^2$, while $\sfl_\pm$ is a variable spatial weight, which varies `microlocally', i.e.\ in phase space $T^* \RR^n$.  The weight $\sfr_+$ is chosen  
so that $u \in \mathcal{X}^{s, \sfr_+}$, localized in frequency close to the incoming radial oscillation $e^{-i\lambda r}$, decays at least as $r^{-(n-1)/2
- \delta}$ with $\delta > 0$ fixed but small, while near the outgoing radial oscillation
$e^{ i \lambda r}$, slower decay, as $r^{-(n-1)/2 +
  \delta}$, is permitted. The weight $\sfr_-$ has the opposite property: the decay must be faster than $r^{-(n-1)/2}$ near the outgoing radial oscillation, but can be slower near the incoming radial oscillation. 

 This means that, for the $+$ sign, the `outgoing' expansion at infinity typical of generalized eigenfunctions is permitted, while the `incoming' expansion is not, while for the $-$ sign, the situation is reversed. This is consistent with the statement that the inverse
map to \eqref{eq:first-invertible} is, for the $+$ sign, the outgoing
resolvent $(\Delta -
 (\lambda + i0)^2)^{-1}$, and for the $-$ sign, the incoming resolvent $(\Delta -
 (\lambda - i0)^2)^{-1}$, meaning that solutions $(\Delta - \lambda^2)u = f \in
C^\infty_{c}(\mathbb{R}^n)$ with $u \in \mathcal{X}^{s,\sfr_\pm}$
admit asymptotic expansions of the form 
$$
u = r^{-(n-1)/2} e^{\pm i \lambda r} \sum_{j = 0}^\infty r^{-j} v_j,
\qquad  v_j \in C^\infty(\mathbb{S}^{n-1}).
$$ 

  The domain of \eqref{eq:first-invertible} is defined by an the priori regularity condition
\begin{equation}
  \label{eq:4}
  \mathcal{X}^{s,\sfr_\pm} := \large\{ u \in H^{s,\sfr_\pm} :  (\Delta - \lambda^2)
  u \in H^{s - 2, \sfr_\pm + 1} \large\}.
\end{equation}
The exponents $(s-2, \sfr_\pm+1)$ reflect the order $(2,0)$ of the operator $P$, as well as the ellipticity of $P$ at fibre-infinity and the fact that $P$ is of real principal type at spatial-infinity, leading to a loss of one order of decay in the spatial regularity $\sfr_\pm$. 
 It is a tautology that $\Delta - \lambda^2$ is a bounded operator
 from $\mathcal{X}^{s,\sfr_\pm}$ to
 $\mathcal{Y}^{s-2,\sfr_\pm+1}$. What is \emph{not} obvious is that
 this is an invertible map, a result due to Vasy \cite{va18} with
 methods going back to Melrose \cite{RBMSpec}, and which we give a
 detailed proof of
 below.  The inverse operator  depends on the
 choice of sign $\pm$ (the choice giving either the incoming or
 outgoing resolvent), and as in the work of Guti\'errez, although a
 choice must be made, the only effect of this choice is to determine
 whether one prescribes the incoming data $f$ in the main theorems, or
 the outgoing data $g$.

 One thus obtains an inverse mapping $R(\lambda + i 0) \colon H^{s - 2, \sfr +
  1}  \lra H^{s, \sfr}$, but this is not enough to solve nonlinear
problems. Given that our nonlinear term is assumed to be polynomial, we need to work with spaces of functions with good algebra (or multiplicative) properties. The spaces $ \mathcal{X}^{s,\sfr_\pm}$ and $\mathcal{Y}^{s,\sfr} = H^{s,\sfr_\pm}$ are not suitable for this purpose, even (surprisingly) for large $s$. Recall that,  for $s > n/2$, $H^{s, 0}$, the standard
Sobolev space of order $s$, forms an algebra, i.e.\ $H^{s, 0} \cdot
H^{s, 0}  \subset H^{s, 0}$. If we include spatial weights, then (at least for constant weights), these combine additively, in the sense that we have  for $r_1, r_2 \in
\mathbb{R}$, $H^{s, r_1} \cdot
H^{s, r_2}  \subset H^{s, r_1 + r_2}$. However, our weights are
typically negative -- indeed, they are forced to be so to obtain
bijectivity of $\Delta - \lambda^2$ -- so this will not lead to a
mapping $\Phi$ on a fixed space, as in \eqref{Phi1}; indeed, the
nonlinear operation must \emph{gain} one order of spatial decay to
account for the loss of one order in the action of the resolvent
inverting \eqref{eq:first-invertible}. 

To do this, we work with spaces with additional regularity with
respect to the differential operators with coefficients that grow
linearly at infinity but which annihilate the outgoing oscillation
$e^{i \lambda r}$. 
These are generated by the operators $r (\partial_r - i \lambda)$ and purely angular differential operators $\partial_{\omega}$. This type of regularity condition is precisely
the ``module regularity'' introduced by the second author together
with Melrose and Vasy in \cite{HMV2004}, and used by the first author with Haber and Vasy in \cite{GHV} to solve a semilinear wave equation.  Thus for $s, \sw \in \mathbb{R}, \kappa, k \in
\mathbb{N}_0$ with $\sw < -1/2, \kappa \ge 1$, we define $L^2$-based Sobolev
spaces $ H^{s, \sw; \kappa, k }_+$ in which $s$ is the order of differentiability in the usual sense, i.e.\ relative to constant coefficient vector fields,  $\sw$ is the decay rate relative to
$L^2$, $\kappa$ is the order of ``module'' differentiability just
described, and $k$ is the order of differentiability tangential in
the ``angular'' direction. (Provided $\kappa \geq 1$, we can take the spatial weight $\sw$ here to be a \emph{constant} slightly less than $-1/2$, as the module regularity itself --- which is asymmetric with respect to the incoming and outgoing oscillations, $e^{\pm i \lambda r}$ ---  enforces additional vanishing of the incoming oscillations.)  We arrive at a refinement of the mapping
property  \eqref{eq:first-invertible}, namely we obtain an invertible map 
\begin{equation}
  \label{eq:Fred-map}
  \Delta - \lambda^2 \colon \mathcal{X}_+^{s,\sw; \kappa, \smo} \lra \mathcal{Y}_+^{s-2,
    \sw+1; \kappa, \smo},
\end{equation}
where $\mathcal{Y}_+^{s,\sw;\kappa, \smo} = H_+^{s, \sw; \kappa, \smo}$ and, analogously to \eqref{eq:4}, 
the $\mathcal{X}_+^{s,\sw; \kappa, \smo}$ are given by 
\begin{equation*}
  \mathcal{X}_+^{s,\sw; \kappa, \smo} \defeq \large\{ u \in H_+^{s, \sw ; \kappa, \smo} :  (\Delta - \lambda^2)
  u \in H_+^{s - 2, \sw + 1; \kappa, \smo} \large\}. 
\end{equation*}
 See Theorem \ref{thm:Fred prop} below.  The inverse
map to \eqref{eq:Fred-map} we continue to denote by $R(\lambda + i
0)$, as it is just the restriction of the inverse of
\eqref{eq:first-invertible} to $ \mathcal{Y}_+^{s-2,
    \sw+1; \kappa, \smo}$ within an appropriate choice of
  $\mathcal{Y}^{s-2,\sfr_+ +1}$.  
For $\kappa \geq 1$ and 
$k \geq (n-1)/2$, these spaces satisfy \emph{improved} multiplicative properties. 
For example, we have 
$$
\Big( H_+^{s, \sw; \kappa, \smo} \Big)^p \subset H_+^{s, p\sw + (p-1)n/2 - \kappa; \kappa, \smo};
$$
when $\kappa=1$, we gain $(p-1)n/2 - 1$ in the spatial weight, which is
crucial as it allows us to gain the order of spatial decay discussed
above, essential to remain in a fixed space $H_+^{s, \sw; 1, \smo}$ from
the combination of  applying the nonlinear operator followed by the
resolvent.  With $\sw=\mathsf{l}_+ = -1/2 - \delta$, where $\delta$ can be taken
arbitrarily small, this leads to the condition \eqref{pn}.

For $\kappa = 1$ and sufficiently large $k$, we obtain our nonlinear
eigenfunctions using a contraction map on the space $H_+^{s,\sw; 1,
  \smo}$. However, the nonlinear eigenfunction, or even the linear
eigenfunction $u_0$, does not lie in this space as its incoming
oscillations do not have the required decay. To deal with this, we
decompose $u_0$, the linear eigenfunction with incoming data $f$, into
two terms, $u_0 = u_+ + u_-$, where $u_-$ contains the leading incoming oscillation (which is the obstruction to membership in $H_+^{s,\sw; 1, \smo}$).   Consequently, the term
$u_+$ lies in $H_+ ^{s,\sw; 1, \smo}$ but $u_-$ does not. (Indeed, one
can think of $u_+$ as a sum of purely outgoing  terms plus the 
lower-order incoming terms, with additional decay, comprising $u_0$.)   We seek a nonlinear eigenfunction
satisfying
 $$
 u = u_0 + (\Delta - (\lambda + i0)^2)^{-1}  N[u],
 $$
 where $N(u)$ is the nonlinear term. Notice that, since the resolvent gains us two orders of smoothness, according to \eqref{eq:Fred-map}, $N$ can involve derivatives of $u$ up to order $2$.  Subtracting $u_-$ from both sides we have the equivalent equation 
  $$
 u - u_- = u_+ + (\Delta - (\lambda + i0)^2)^{-1}  N[u],
 $$
 and now defining $w = u - u_-$ we obtain 
 $$
 w = u_+ + (\Delta - (\lambda + i0)^2)^{-1}  N[u_- + w].
 $$
 Thus, it suffices to show that the map 
 \begin{equation}
 \Phi(w) := u_+ + (\Delta - (\lambda + i0)^2)^{-1}  N[u_- + w]
 \label{Phi}\end{equation}
 is a contraction on $H_+^{s,\sw; 1, \smo}$ when the norm of $w$ in this space is sufficiently small, which we show  provided the norm of $f$ in $H^{\smo + 2}$, $\smo > (n-1)/2$, is sufficiently small.

\subsection{Previous literature}
Standing wave solutions to nonlinear Schr\"odinger equations have been studied for a long time. The first studies were on finite-energy solutions, where the linearization at $u=0$ is the operator $\Delta + \lambda^2$ with $\lambda > 0$; this problem is of a different character, as the linearization at $u=0$ is an invertible operator. See \cite{BL1, BL2, Nehari} for classical work on this subject on Euclidean space, and \cite{CM, MT} for more recent works on hyperbolic and rotationally symmetric manifolds.  The more recent literature is vast and we make no attempt to review it. 

The first paper to study nonlinear Helmholtz eigenfunctions seems
to be \cite{Gut} by Guti\'errez, already discussed earlier in this
introduction. She was able to show that, for the cubic nonlinearity
and in dimensions 3 and 4, that there are nonlinear eigenfunctions
with arbitrary small incoming data $f \in L^2(\mathbb{S}^{n-1})$. We note
in passing that the restriction and uniform Sobolev estimates of \cite{GHS, GH} allow one to extend Guti\'errez'
method to all asymptotically conic manifolds.

There result of \cite{Gut} is a
perturbative result from the zero solution, as is ours
here. Non-perturbative  were found by Evequoz and Weth \cite{EW}, who
used mountain pass techniques to find solutions far from the zero
solution. These approaches have been extended in various ways in
\cite{MMP, LW}. In \cite{EP} the topology of the zero level sets of
bounded real solutions to $(\Delta - 1) u + u^3 = 0$ are studied. 

In the microlocal analysis literature, the underlying theory of real principal
type propagation in the setting of `scattering' pseudodifferential
operators, was developed by Melrose in \cite{RBMSpec}. The scattering calculus itself 
appeared earlier (at least on Euclidean space) in work of H\"ormander and Parenti, see for example \cite{Parenti}. A Fredholm theory for nonelliptic operators was developed by Vasy \cite{VD2013}  on anisotropic Sobolev spaces. This is elaborated and
explained in detail in his lecture notes \cite{va18}. His method
applies to operators that are of real principal type, except for
manifolds of radial points which have a particular structure. 
The first author with Haber and Vasy \cite{GHV} used this Fredholm
framework to study the Feynman propagator on asymptotically Minkowski
spaces and showed that the semilinear wave equation with polynomial
nonlinear is solvable for small data, using a setup very similar to
that considered here.  This latter result is an extension to a more
fundamentally microlocal setting of a previous result of Hintz and
Vasy \cite{HVsemi}.  Indeed, the latter two authors have developed a robust
microlocal analysis framework
which they use to study quasilinear wave
equations in various noncompact settings, see in particular
\cite{HVquasi, HV2018-kerr-de-sitter, HHV-Kerr-stable}.  In a recent series of papers \cite{Vasy-Resolvent-zero-energy, Vasy-LAP, Vasy-Res-near-zero-energy-1808.06123}, Vasy considers `second-microlocal' regularity for the Helmholtz operators, both at a fixed finite energy and near zero energy, which is very similar to our module regularity here. He proves  mapping
properties for the resolvent that overlap our result on the 
invertibility of the Helmholtz operator on spaces with module
regularity in Theorem \ref{thm:Fred prop} below.

\subsection{Outline of this paper}
In Section~\ref{sec:scat calc} we review the theory of pseudodifferential operators with variable order and define anisotropic Sobolev spaces. We discuss the geometry of the bicharacteristic flow of $\Delta - \lambda^2$ at spatial infinity and define the radial sets. We also discuss module regularity and define the corresponding spaces of functions. Finally, we consider algebra properties of these spaces with sufficient module regularity.

In Section~\ref{sec:Fredholm} we prove the invertibility of $\Delta -
\lambda^2$ acting between spaces as in \eqref{eq:Fred-map}. The proof
of this is at least implicitly contained in works of Vasy,
particularly his lecture notes \cite{va18}, but it is not explicitly
written out for this operator. Since, in addition, this is quite
recently developed technology and not standard, we have decided to
give at least some of the details, to make the paper more
self-contained.

In Section~\ref{sec:proof} we prove the main theorems, using the
technical preparation of the previous two sections.

\


\section{Scattering calculus}\label{sec:scat calc}

In this section, we discuss the technical tools that we need for the
proof of the main theorems. We begin by discussing the
pseudodifferential operators -- the scattering calculus -- used in the
proof, on $\RR^n$, and extend this in the following
subsection to asymptotically conic manifolds. We refer to \cite{va18}
and \cite{RBMSpec} for more detailed
treatment of the scattering calculus.


\subsection{The scattering calculus on $\RR^n$}
Throughout this paper, we denote Euclidean coordinates on $\RR^n$ by  $z = (z_1, \dots, z_n)$, and their dual coordinates by $\zeta = (\zeta_1, \dots, \zeta_n)$. We use the Japanese bracket $\ang{z}$ to denote $(1 + |z|^2)^{1/2}$. The Fourier transform, with H\"ormander's normalization, will be denoted $\mathcal{F}$, with inverse $\mathcal{F}^{-1}$:
\begin{equation}
\mathcal{F} f(\zeta) = \int e^{-i z \cdot \zeta} f(z) \, dz, \quad \mathcal{F}^{-1}  \tilde f(z) = (2\pi)^{-n} \int e^{i z \cdot \zeta} \tilde f(\zeta) \, d\zeta,
\end{equation}
We denote $-i \partial/\partial z_j$ by $D_{z_j}$, and use multi-index notation $D_z^\alpha$, $\alpha = (\alpha_1, \dots, \alpha_n) \in \mathbb{N}^n$ for higher-order derivatives, in the standard way. 

Pseudodifferential operators on $\RR^n$ are defined via their symbols, which are functions on $T^* \RR^n$. For sufficiently decaying symbols, say $a(z, \zeta) \in \mathcal{S}(T^* \RR^n)$, the corresponding pseudodifferential operator (defined by left quantization) is the operator with kernel 
\begin{equation}
\Op(a)(z,z') := (2\pi)^{-n} \int e^{i(z-z') \cdot \zeta} a(z, \zeta) \, d\zeta.
\end{equation}
This definition is extended to a larger class of symbols by integration by parts. 
The scattering calculus is obtained by letting $a$ lie in a (scattering) symbol class $S^{s,\sw}(T^* \RR^n)$. For fixed real numbers $s$ and $r$ this symbol class is defined by the estimates 
\begin{equation}
  \label{standard symbol est}
\forall \alpha, \beta \in \mathbb{N}^n, \ \exists C_{\alpha, \beta} < \infty \text{ such that }    \Big|  D_z^\alpha D_\zeta^\beta a(z,\zeta) \Big| \le
  C_{\alpha,\beta}  \ang{z}^{\sw-|\alpha|} \ang{\zeta}^{s-|\beta|}. 
\end{equation}
This is a rather restrictive class of symbols in which $z$ and $\zeta$
are treated symmetrically: differentiation in $\zeta$ leads to decay
in $\zeta$ and differentiation in $z$ leads to decay in $z$. It is in
the H\"ormander class of symbols \cite[Section 18.4]{Hor}
relative to the slowly varying metric
$$
\frac{dz^2}{\ang{z}^2} + \frac{d\zeta^2}{\ang{\zeta}^2}.
$$
The class of pseudodifferential operators of order $(s, r)$ is by definition the class of operators obtained 
from symbols $a \in S^{s,\sw}(T^* \RR^n)$ as above, and is denoted $\Psi_{\text{sc}}^{s,\sw}(\RR^n)$. These pseudodifferential operators form a bi-filtered algebra; concretely, the composition of an operator in $\Psi_{\text{sc}}^{s_1, \sw_1}(\RR^n)$ with an operator in $\Psi_{\text{sc}}^{s_2, \sw_2}(\RR^n)$ is an operator in $\Psi_{\text{sc}}^{s_1 + s_2, \sw_1 + \sw_2}(\RR^n)$. The symbol of the composition $\Op(a) \circ \Op(b)$ is given by 
$$
c(z, \zeta) = e^{i D_y \cdot D_\eta} a(z, \eta) b(y, \zeta) \Big|_{y = z, \eta = \zeta},
$$
and has an asymptotic expansion
\begin{equation}
c(z, \zeta) \sim  \sum_\alpha i^{|\alpha|} D_\zeta^\alpha a(z, \zeta) D_z^\alpha b(z, \zeta) / \alpha !
\label{symbol-exp}\end{equation}

Given this formula, and the decay of derivatives from \eqref{eq:3},  it is clear that the \emph{principal symbol}, which for $a \in S^{s,\sw}(T^* \RR^n)$ is its equivalence class in $S^{s,\sw}(T^* \RR^n) / S^{s-1,\sw-1}(T^* \RR^n)$, is multiplicative under composition. 
Notice that, unlike in the usual pseudodifferential calculus, here the
principal symbol is well defined up to symbols decaying (a full
integer order) faster in $z$, as well as decaying (a full integer
order) faster in
$\zeta$. That means that the principal symbol is, in effect,
completely well-defined at infinity for all finite frequencies
$\zeta$, and not just asymptotically as $|\zeta| \to \infty$, at least
in the case of classical symbols (discussed below).

We elaborate on this point. It is convenient in the scattering
calculus to view symbols on the compactification of $T^* \RR^n$. We
have already mentioned in the Introduction the radial compactification $\bbb$ of $\RR^n$.  This is obtained via the diffeomorphism
$\varphi \colon \mathbb{R}^n \lra \mathbb{B}^n$ from $\R^n$ to its unit ball, given by 
$$
z \mapsto \varphi(z)  = \frac{z}{1 + \ang{z}}  \in \mathbb{B}^n. 
$$
The closure of the image of this map is obviously the closed unit ball
$\overline{\mathbb{B}^n}$, and the map realizes $\mathbb{R}^n$ as the
interior of this compact manifold with boundary.  In keeping with
standard notation we write $\bbb \simeq \overline{\mathbb{B}^n}$ where
the notation indicates that we keep in mind the identification between
points in $\mathbb{R}^n$ with points in the ball
$\mathbb{B}^n$. 
We similarly radially compactify the fibre copy of $\RR^n$. Thus, we may understand the behavior of symbols by pulling them back via
$\varphi^{-1} \times \varphi^{-1} $ to $\bbb \times \bbb$. This is particularly helpful for classical symbols, which by definition take the form $\ang{z}^{\sw} \ang{\zeta}^s C^\infty(\bbb \times \bbb)$ (such functions automatically satisfy the symbol estimates \eqref{eq:3}). The class of such symbols is denoted $S_{\text{cl}}^{s,\sw}(T^* \RR^n)$. In particular, for classical symbols of order $(0, 0)$, the symbol is continuous up to the boundary of $\bbb \times \bbb$, and the principal symbol can be viewed as the boundary value of this symbol. Notice that this has two `components', one a function at fibre-infinity, that is, on $\bbb \times \mathbb{S}^{n-1}$, and one at `spatial infinity', that is, at $\mathbb{S}^{n-1} \times \bbb$. More 
generally, for an operator $A$ with classical symbol $a$ of order $(s, 0)$ (such as our Helmholtz operator $\Delta - \lambda^2$), the principal symbol is conveniently viewed as the combination of a fibre component, $\sigma_{\text{fiber},2,0}(A)(z,\zeta)$, which is homogeneous in $\zeta$ of degree $s$ (and is hence determined by $\zeta$ restricted to any sphere), and a base (or spatial) component, $\sigma_{\text{base},2,0}(A)(\omega,\zeta)$ where $\omega$ is the limiting value of $z/|z|$ on the sphere at infinity, and $\zeta \in \RR^n$. These have an obvious compatibility relation at the `corner' where $|z|$ and $|\zeta|$ are both infinite.  In particular, for the Helmholtz operator, the principal symbol is given by 
\begin{gather*}
\sigma_{\text{fiber},2,0}(\Delta - \lambda^2)(z,\zeta) \defeq \sum_{i,j} g_{ij} \zeta_i \zeta_j,  \\
\sigma_{\text{base},2,0}(\Delta - \lambda^2)(\omega,\zeta) \defeq \sum_{i,j} g_{ij} \zeta_i \zeta_j - \lambda^2.
\end{gather*}
It is important to understand that the base symbol need not be homogenous, and indeed is not homogeneous for the Helmholtz operator. 

Suppose $A \in \Psi_{\text{sc}}^{s,\sw}(\RR^n)$ has classical symbol. The elliptic
set of $A$, $\Ell_{s,\sw}(A) = \Ell(A)$, is the open subset of $\partial(\bbb \times \bbb)$ consisting of those points near which the principal symbol is at least as big as $c \ang{z}^\sw \ang{\zeta}^s$ for some $c > 0$. Its complement in $\partial(\bbb \times \bbb)$ is called the characteristic 
variety, $\Sigma_{s,\sw}(A) = \Sigma(A)$. The Helmholtz operator $\Delta - \lambda^2$ is elliptic at fibre-infinity, thus the characteristic variety is contained in the component at spatial infinity, and is given by
\begin{equation}
\Sigma(\Delta - \lambda^2) = \{ (\omega, \zeta) \in \mathbb{S}^{n-1} \times \RR^n \mid |\zeta| = \lambda \}.
\end{equation}
We also define the \emph{operator wavefront set} or \emph{microlocal support}, $\WF'(A)$ of $A$, to be the complement of the set of points $\q \in \partial(\bbb \times \bbb)$ such that, in a neighborhood $U$ of $\q$,  the full symbol $a(z,\zeta)$ satisfies \eqref{standard symbol est} for all $s, \sw \in \R$. Thus, intuitively speaking, $A$ is microlocally of order $-\infty$ in both the fibre and base senses away from $\WF'(A)$.

Returning to the composition formula \eqref{symbol-exp}, it is straightforward to see from this that the commutator of two pseudodifferential operators $A \in \Psi_{\text{sc}}^{s_1, \sw_1}(\RR^n)$ and $B \in \Psi_{\text{sc}}^{s_2, \sw_2}(\RR^n)$ is an operator $[A, B]$ in $\Psi_{\text{sc}}^{s_1 + s_2-1, \sw_1 + \sw_2-1}(\RR^n)$, with principal symbol given by the Poisson bracket of the symbols $a$ and $b$ of these operators:
\begin{equation}
\sigma_{\text{pr}}([A, B]) = \{ a, b \} \mod S^{s_1 +s_2 - 2, \sw_1 + \sw_2 - 2}(T^* \RR^n).
\end{equation}
We also recall that the Poisson bracket is given in terms of the Hamilton vector fields by 
\begin{equation}\label{pb}
\{ a, b \} = H_a (b) = - H_b(a), \quad H_a = \sum_j \Big( \frac{\partial a}{\partial \zeta_j} \frac{\partial}{\partial z_j}
- \frac{\partial a}{\partial z_j} \frac{\partial}{\partial \zeta_j} \Big).
\end{equation}
This is conceptually important for us in relation to the Fredholm estimates in Section~\ref{sec:Fredholm}. In the elliptic region, these Fredholm estimates are easy to obtain, but in a neighbourhood of the characteristic variety of $\Delta - \lambda^2$, they are obtained from positive commutator estimates, that is, from operators whose commutator with $\Delta - \lambda^2$ has positive principal symbol microlocally. Equation \eqref{pb} shows that this amounts to finding symbols $b$ such that $H_p(a)$ is positive, where $p = |\zeta|^2 - \lambda^2$ is the symbol of the Helmholtz operator. This then motivates considering the properties of the Hamilton vector field of $p$, and its flow lines (known as bicharacteristics), within the characteristic variety $\Sigma(\Delta - \lambda^2)$.

The Hamiltonian vector field $H_p$, for $p = |\zeta|^2 - \lambda^2$ the symbol of the Euclidean Helmholtz operator, is given by 
$$
\dot z = 2\zeta, \quad \dot \zeta = 0. 
$$
We would like to view this on the compactification $\bbb \times \bbb$, and investigate its behaviour in a neighbourhood of $\Sigma(\Delta - \lambda^2)$. To do this, we use polar coordinates, $(r, \omega)$, as before, and then choose arbitrary local coordinates $y$ on a patch of the sphere $\mathbb{S}^{n-1}$. We also write $x = r^{-1}$, which serves as a boundary defining function for spatial infinity. Let $(\nu, \eta)$ be the dual coordinates to $(r, y)$.  In these coordinates, the full symbol $p$ of $\Delta - \lambda^2$ takes the form 
$$
p(r,y, \nu, \eta) = \nu^2  -i (n-1)r^{-1} \nu + r^{-2}b_k \eta_k +  r^{-2} h^{jk} \eta_j \eta_k - \lambda^2,
$$
where $h^{jk} = h^{jk}(y)$ is the dual metric corresponding to the
standard round metric $h = h_{jk}(y)$, on $\mathbb{S}^{n-1}$, $b_k =
D_{y^j} h^{jk} + h^{jk}D_{y^j} \log( \sqrt{|\det h|})$, and we use the
summation convention.  
We now make the change of variables to $\mu_j = r^{-1} \eta_j$ as these quantities have uniformly bounded length as $r \to \infty$. In terms of $(\mu, \nu)$ the full symbol is 
$$
p(r,y, \nu, \mu) = \nu^2  -i (n-1)r^{-1} \nu + r^{-1}b_k \mu_k + h^{jk} \mu_j \mu_k - \lambda^2,
$$
and we see that the principal symbol at spatial infinity is 
\begin{equation}\label{flat-symbol}
\nu^2 + h^{jk} \mu_j \mu_k - \lambda^2 = \nu^2 + |\mu|^2_y - \lambda^2, 
\end{equation}
where $|\mu|^2_y :=h^{jk} \mu_j \mu_k$ is the metric function on $T^*\mathbb{S}^{n-1}$. Thus the characteristic set $\Sigma$ satisfies 
\begin{equation}\label{flat-Sigma}
\Sigma = \{x = 0, \, \nu^2 + |\mu|^2_y = \lambda^2\}.
\end{equation}

In the canonical coordinates $(r, y; \nu, \eta)$ we easily compute the Hamilton vector field of the principal symbol: 
\begin{equation}\begin{aligned}
\dot r = 2\nu, \quad &\dot y^l= 2 r^{-2} h^{lk} \eta_k, \\
\dot \nu = 2 r^{-3} h^{jk} \eta_j \eta_k, \quad &\dot \eta_l = -r^{-2} \frac{\partial h^{jk}}{\partial y^l} \eta_j \eta_k.
\end{aligned}\end{equation}
Changing to the variable $\mu$, and writing $x = r^{-1}$, the equations become 
\begin{equation}\begin{aligned}
\dot x = -2\nu x^2, \quad &\dot y^j = 2 x h^{jk} \mu_k, \\
\dot \nu = 2 x h^{jk} \mu_j \mu_k, \quad &\dot \mu_l = - 2x \nu \mu_l - x \frac{\partial h^{jk}}{\partial y^l} \mu_j \mu_k.
\end{aligned}\end{equation}
It is clear that this vector field vanishes to first order as $x \to 0$. Dividing by $x$ we obtain a rescaled Hamilton vector field that we denote by $\mathsf{H}_p$, taking the form 
\begin{equation}\begin{aligned}
\dot x = -2\nu x, \quad &\dot y^j = 2 h^{jk} \mu_k, \\
\dot \nu = 2 h^{jk} \mu_j \mu_k, \quad &\dot \mu_l = -  2\nu \mu_l - \frac{\partial h^{jk}}{\partial y^l} \mu_j \mu_k.
\end{aligned}\end{equation}
In the coordinates $(x, y, \nu, \mu)$ this is a smooth vector field on the compactification. We can write it using derivative notation as follows: 
\begin{equation}\label{flat-Hvf}
\mathsf{H}_p = -2\nu (x \partial_x + R_\mu) + 2|\mu|^2_y \partial_\nu + H_{\mathbb{S}^{n-1}} ,
\end{equation}
where $H_{\mathbb{S}^{n-1}}$ is the Hamilton vector of the round metric $|\mu|^2_y$ on $T^*\mathbb{S}^{n-1}$ and $R_\mu = \mu \cdot \partial_\mu$ is the radial vector field on the fibers of $T^*\mathbb{S}^{n-1}$. 
In these coordinates, and on $\Sigma$, we have $\mathsf{H}_p = 2\nu
R_\mu - 2|\mu|^2_y \partial_\nu + H_{\mathbb{S}^{n-1}}$.
 We can check
directly that $\mathsf{H}_p(\nu^2 + |\mu|^2_y) = 0$ and that the
precisely on the two `radial sets' 
\begin{equation}\label{def-radial}
\mathcal{R}_\pm \defeq \{|\mu|_y = 0 = x, \nu = \pm \lambda \}. 
\end{equation}

\begin{rem}
Notice that the incoming radial set $\mathcal{R}_-$ is a source, and the outgoing radial set $\mathcal{R}_+$ a sink, for the rescaled Hamilton vector field $\mathsf{H}_p$. Note, also, that the coefficient of $x \partial_x$ in $\mathsf{H}_p$ is $\pm \lambda$ at $\mathcal{R}_\pm$, hence always nonzero. This nonvanishing has the important consequence that we can find operators with positive commutators at $\mathcal{R}_\pm$, despite $\mathsf{H}_p$ vanishing there. In this sense the radial sets are `nondegenerate'. 
\end{rem}

\begin{figure}
\centering
\labellist
\small
\pinlabel $\mathcal{R}_-$  at  134 5
\pinlabel $\mathcal{R}_+$  at  133 182
\pinlabel \text{$\mathsf{l}_+ = -\frac{1}{2} - \delta$} at 13 184
\pinlabel \text{$\mathsf{l}_+ = -\frac{1}{2} + \delta$}  at 13 4
\pinlabel $\mu$ at 183  85
\pinlabel $\nu$ at 84 193
\endlabellist
\includegraphics[scale=.85]{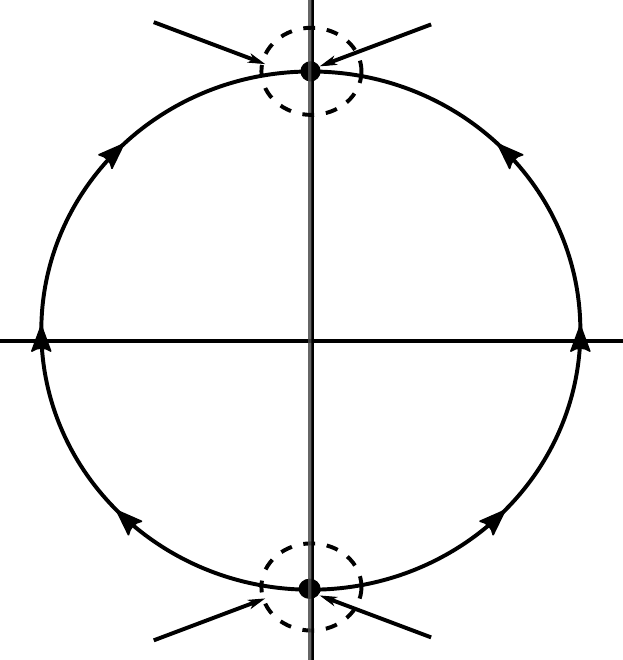}
 \caption{The bicharacteristic flow in the characteristic set \newline $\Sigma(P) = \{x = 0, \, \nu^2 + |\mu|^2_y = \lambda^2\}$ with $P=\Delta-\lambda^2$.}
 \label{fig: char set}
\end{figure}


Up to this point, we have taken the spatial weight $\sw$ to be constant. To consider variable order spaces, we allow the spatial weight to itself be a classical symbol $\sfr$ of order $(0,0)$ (variable weights will always be written in boldface). Choosing an arbitrary small positive number $\boldsymbol\delta$, we define the symbol class $S^{s,\sfr}_{\boldsymbol\delta}(T^* \RR^n)$ by the estimates 
\begin{equation}
  \label{eq:3}
\forall \alpha, \beta \in \mathbb{N}^n, \ \exists C_{\alpha, \beta} < \infty \text{ such that }    \Big|  D_z^\alpha D_\zeta^\beta a(z,\zeta) \Big| \le
  C_{\alpha,\beta}  \ang{z}^{\sfr-(1 - \boldsymbol\delta)|\alpha| + \boldsymbol\delta |\beta|} \ang{\zeta}^{s-|\beta|} . 
\end{equation}
These are symbol estimates of type $(1 - \boldsymbol\delta, \boldsymbol\delta)$ in the $z$
variable (in the sense of H\"ormander), which are slightly `worse'
than the standard estimates of type $(1,0)$. The reason for including
this small loss is that the `classical' symbols corresponding to a
variable order take the form $\ang{z}^{\sfr} \ang{\zeta}^s$ times a
$C^\infty$ function on $\bbb \times \bbb$, and these symbols incur
logarithmic losses when differentating the $\sfr$ function. 

Changing to these symbol classes makes essentially no difference since we can take $\boldsymbol\delta$ arbitrary small, while pseudodifferential calculus, as is well known, works with inessential changes provided $\boldsymbol\delta < 1/2$. The only differences are that the principal symbol takes values in $S^{s,\sw}(T^* \RR^n) / S^{s-1 + \boldsymbol\delta,\sw-1+\boldsymbol\delta}(T^* \RR^n)$ instead of $S^{s,\sw}(T^* \RR^n) / S^{s-1,\sw-1}(T^* \RR^n)$, and the commutator $[A, B]$ above will have order $\Psi_{\text{sc}}^{s_1 + s_2-1 + \boldsymbol\delta, \sw_1 + \sw_2-1+\boldsymbol\delta}(\RR^n)$ instead of $\Psi_{\text{sc}}^{s_1 + s_2-1, \sw_1 + \sw_2-1}(\RR^n)$. 

\subsection{The scattering calculus on asymptotically conic manifolds}
We now work in the setting of asymptotically conic manifolds. Thus, let $M$ be a compact manifold with boundary, and $M^\circ$ its interior. Let $x$ be a boundary defining function for $M$ (meaning $\partial M = \{ x = 0\}$, $x$ vanishes simply at $\partial M$, and $x > 0$ on $M^\circ$), and $y$ coordinates on a patch $O$ of $\partial M$, extended to a collar neighbourhood $\{ x < c \}$ of $\partial M$, where $c >0$ is fixed and small. Given a scattering metric $g$ on $M^\circ$, we call $(x, y)$ as an `adapted coordinate system' near a boundary point $(0, y_0)$ with $y_0 \in O$ provided that $g$ takes the form \eqref{ac-metric} in this coordinate system on the patch $O$. This condition determines a metric $h$ on the boundary $\partial M$, such that $g$ is asymptotic to the conic metric $dr^2 + r^2 h$ as $r \to \infty$, where $r := 1/x$, as is clear from the equivalent expression \eqref{ac-metric2}. 

We now define scattering pseudodifferential operators of order $(s, \sw)$ on $M^\circ$. We do this by mimicking the behaviour of scattering symbols of order $(s, \sw)$ on $\RR^n$. To do this, we choose a diffeomorphism $\chi$ from a small open set $O \subset \partial M$ to an open set $O' \subset \mathbb{S}^{n-1}$, where $\mathbb{S}^{n-1}$ is viewed as the set of vectors of unit length in $\RR^n$. We then consider the diffeomorphism 
\begin{equation}\label{diffeo}
(x, y) \mapsto \frac{\chi(y)}{x} = r \chi(y) \in \RR^n.
\end{equation}
One can check that the norm of the derivative of this map is uniformly
bounded, where we measure with respect to the metric $g$ on $M^\circ$
and with respect to the Euclidean metric on $\RR^n$. We now define
suitable cotangent variables that are uniformly bounded (with respect
to the dual metric $g^*$). Let $(\nu, \eta)$ be the dual variables to
coordinates $(r, y)$, and define $\mu = r^{-1} \eta = x \eta$ as we
did in the previous section. Then $\nu$ is the symbol of $D_r = 
x^2 D_x$ and $\mu_j$ is the symbol of $ r^{-1} D_{y_j} = x
D_{y_j}$; since $(x^2 \partial_x, x \partial_{y_j})$ clearly form a
uniformly bounded, uniformly nondegenerate frame of functions with
respect to the metric $g$, these are uniformly bounded and uniformly
nondegenerate linear coordinates on the cotangent bundle, with respect
to the dual metric $g^*$. We define scattering symbols of order $(s,
\sw)$ on $T^* M^\circ$ to be functions $a$ satisfying usual symbolic
estimates of order $s$ away from $\partial M$, and near the
boundary satisfies
\begin{equation}
  \label{eq:3sc}
    \Big|  (x D_x)^j D_y^\alpha D_\nu^k D_\mu^\beta a(x,y,\nu, \mu) \Big| \le
  C_{j,k,\alpha,\beta}  x^{-\sw} \ang{(\nu, \mu)}^{s-k-|\beta|}
\end{equation}
for all $\alpha, \beta\in \mathbb{N}^{n-1}$. We will denote the class of such symbols by $S^{s, \sw}(\scTstar M)$. 
Scattering pseudodifferential operators $A \in \Psi_{\text{sc}}^{s,\sw}(M^\circ)$ are defined as follows: using the local diffeomorphism \eqref{diffeo}, the symbol is mapped (using the induced map on the cotangent bundle) to a symbol of order $(s,\sw)$ on $T^* \RR^n$; we then quantize to a pseudodifferential operator when $(z, z')$ are in the range of this diffeomorphism and pull back to $M^\circ$ by the same map \eqref{diffeo}. By covering $M^\circ$ with a finite number of coordinate charts and using a partition of unity, we get a globally defined operator. This quantization depends on the choice of charts, partition of unity, etc, but all choices lead to the same operator modulo an operator in $\Psi_{\text{sc}}^{s-1, \sw-1}(M^\circ)$ so this is of no importance. To complete the picture, we include in $\Psi_{\text{sc}}^{s,\sw}(M^\circ)$ all kernels $K(z,z')$ that are smooth and rapidly decreasing, with all derivatives, as the distance between $z$ and $z'$ tends to infinity in $M^\circ$. This definition is equivalent to the definition of the scattering pseudodifferential operators defined in \cite{RBMSpec} using the `scattering double space'. 


We see in \eqref{eq:3sc} that there are two types of vector fields (in terms of their behaviour near the boundary) that play a role in $M$. First, there are the \emph{b-vector fields}, which by definition are smooth vector fields that at the boundary are tangent to $M$. These are generated over $C^\infty(M)$ by $x \partial_x$ and $\partial_{y_j}$ near the boundary, and govern the regularity of scattering symbols in the spatial coordinates $(x, y)$ (this is called \emph{conormal regularity} in the microlocal literature). Second, there are the \emph{scattering vector fields}, which are just $x$ times b-vector fields, so generated by $x^2 \partial_x$ and $x \partial_{y_j}$. These have the property of generating, over $C^\infty(M)$, all smooth vector fields on $M$ that are uniformly bounded with respect to $g$. Scattering differential operators of order $(k, 0)$ are precisely differential operators of order $k$ that, near the boundary, can be written in terms of scattering vector fields with $C^\infty(M)$-coefficients. In the case that $(M^\circ, g)$, we can take the constant coefficient vector fields $\partial_{z_j}$ as generators of the scattering vector fields. Both will play a role in our analysis; the $s$ parameter in our pseudodifferential calculus is regularity with respect to scattering vector fields, while b-vector fields define module regularity (measured by the $\kappa$ and $k$ parameters, as discussed in the Introduction). 

Similarly to the Euclidean case, we can compactify the cotangent
bundle $T^* M^\circ$ in a way that mimics the compactification $\bbb
\times \bbb$ above. We have (by assumption) a compactification $M$ of
$M^\circ$. In the interior of $M^\circ$, we can compactify each
cotangent fibre radially. It only remains to say how the fibres are
compactified in the limit as we approach the boundary. We define a
\emph{scattering cotangent bundle}, denoted $\scTstar M$ over $M$,
which over the interior is naturally isomorphic to the usual cotangent
bundle, and has the property that, near $\partial M$, using adapted
coordinate system $(x, y)$, the corresponding coordinates $(\nu, \mu)$ (as defined above) are linear coordinates on the fibres of this bundle that remain valid uniformly up to the boundary $\partial M$. Compactifying each fibre radially gives us a compactification, denoted $\scTstarc M$, analogous to the square in Figure \ref{fig: cot bund}. This is a manifold with corners of codimension two. 
Clearly $x$ is a boundary defining function at spatial infinity. Let $\rho$ denote a boundary defining function for fibre-infinity --- we may take $\rho = \ang{(\nu, \mu)}^{-1}$ when $x$ is small. We will call $(x, y, \nu, \mu)$ adapted coordinates on the scattering cotangent bundle over the neighbourhood $\{ x < c, y \in O \}$ of $(0, y_0) \in \partial M$. 

\begin{figure}\label{fig:square}
\centering
\labellist
\pinlabel  \text{$\overline{{}^{\text{sc}}T^*M}$} at 10 178
\pinlabel  \text{${}^{\text{sc}}T_{\partial M}^*M = \{x = 0\}$} at -58 75
\pinlabel \text{${}^{\text{sc}}S^*M$} at 163 178
\pinlabel \text{$M \times \{0\}$} at 200 92
\pinlabel \text{$\mathcal{R}_-,\, \mathsf{l}_+ = -1/2 + \delta$} at  218 42
\pinlabel \text{$\mathcal{R}_+,\, \mathsf{l}_+ = -1/2 - \delta$} at  218 125
\endlabellist
\includegraphics[scale=.76]{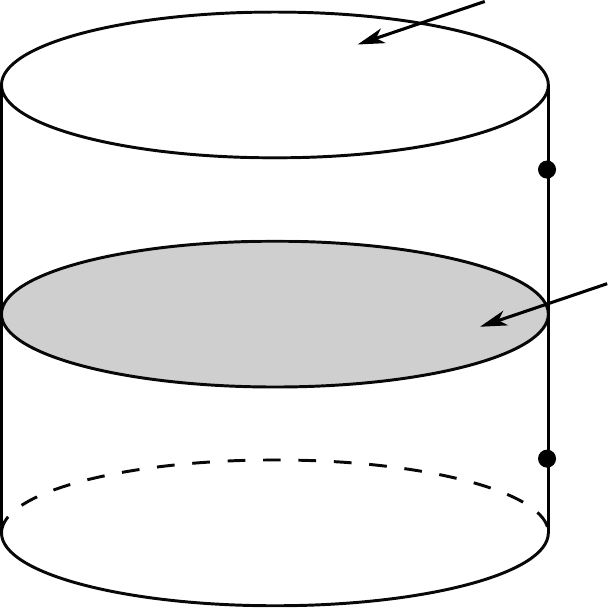}
 \caption{The radial sets in the compactified cotangent bundle. In the case $M^\circ = \RR^n$, ${}^{\text{sc}}T_{\partial M}^*M = \partial \overline{\RR^n} \times \overline{\RR^n} $ and ${}^{\text{sc}}S^*M =  \overline{\RR^n} \times \partial \overline{\RR^n}.$  }
 \label{fig: cot bund}
\end{figure}

We then can consider the subspace of operators $A$ with `classical' symbols $a$ of order $(s,\sw)$ that take the form $x^{-\sw} \rho^{-s}$ times a smooth function on $\scTstarc M$. The class of such symbols will be denoted $S^{s, \sw}_{\text{cl}}(\scTstar M)$.  For such operators, the principal symbol can be defined similarly to the classical case on $\RR^n$. Supposing for simplicity that $\sw=0$, we have a symbol at fibre-infinity, $\sigma_{\text{fiber},s,0}(A)$, which is a function on $\scTstar M$ homogeneous of degree $s$ on each fibre, and a symbol at spatial infinity, $\sigma_{\text{base},s,0}(A)$, which is the symbol $a$ restricted to $x=0$, a function on the scattering cotangent bundle restricted to $\partial M$ (which we denote $\scTstar_{\partial M} M$). These functions are well-defined, that is, they depend only on $A$, not on the particular quantization procedure (that is, the precise way of relating symbols $a$ and operators $A$). The two symbols have the compatibility condition that $\sigma_{\text{base},s,0}(A)$ is asymptotically homogeneous of degree $s$, that is, $\rho^{s}\sigma_{\text{base},s,0}(A)$ has a limit at $\rho = 0$,   and agrees at the corner, $x = \rho = 0$, with the limiting value of $\rho^s \sigma_{\text{fiber},s,0}(A)$. 

Now let $P$ denote the operator $\Delta_g + V - \lambda^2$ on $M^\circ$, where $\Delta_g$ is the (positive) Laplacian with respect to $g$ and $V \in x^2 C^\infty(M)$
is a real potential, vanishing to second order at $\partial M$. (The positive real number $\lambda$ will be fixed throughout.) Then the principal symbols of $P$, in adapted coordinates $(x, y, \nu, \mu)$  near the boundary, are
\begin{equation}
\sigma_{\text{fiber},2,0}(P) = \nu^2 + h^{jk}(y) \mu_j \mu_k,
\end{equation}
and 
\begin{equation}
\sigma_{\text{base},2,0}(P) = \nu^2 + h^{jk}(y) \mu_j \mu_k - \lambda^2. 
\end{equation}
This looks very similar to the form of the base symbol of the flat Laplacian on $\RR^n$ --- compare \eqref{flat-symbol}. It follows that the characteristic variety is given by \eqref{flat-Sigma}, just as in the flat case.
Moreover, exactly the same computation can be made as in the flat case to deduce that the Hamilton vector field of $p$, the symbol of $P$, takes the form 
\begin{equation}\label{ac-Hvf}
\mathsf{H}_p = -2\nu (x \partial_x + R_\mu) + 2|\mu|^2_y \partial_\nu + H_{\mathbb{S}^{n-1}} + xW ,
\end{equation}
where $W$ is a b-vector field (that is, tangent to $x=0$). That is, the Hamilton vector field takes the same form 
as \eqref{flat-Hvf}, up to a error  $x W$. In particular, the radial sets, where the Hamilton vector field vanishes, take the same form \eqref{def-radial} as in the Euclidean case.  \emph{This means that the microlocal analysis of the operator $P$ is no more complicated than that of the flat Laplacian on $\RR^n$} and means that, from this point of view, Theorem~\ref{thm:main2} is a very natural generalization of Theorem~\ref{thm:main1}. 

Our last topic to discuss is variable order scattering pseudodifferential operators. This is completely analogous to the case of variable order operators on $\RR^n$. We allow the spatial weight $r$ to be a classical symbol $\sfr$ of order $(0,0)$ on $\scTstarc M$, and allow a $\boldsymbol\delta$ loss in the symbol estimates. Thus, \eqref{eq:3sc} is replaced by 
\begin{equation}
  \label{eq:3sc var ord}
    \Big|  (x D_x)^j D_y^\alpha D_\nu^k D_\mu^\beta a(x,y,\nu, \mu) \Big| \le
  C_{j,k,\alpha,\beta}  x^{-\sfr - \boldsymbol\delta(j+k+|\alpha| + |\beta|)} \ang{(\nu, \mu)}^{s-k-|\beta|}
\end{equation}
and the rest of the theory proceeds as in the Euclidean case.

\subsection{Sobolev spaces of variable order} \label{Sobolev spaces of variable order}

We begin with the Euclidean case. The Sobolev spaces $H^{s,\sw}(\RR^n)$ are the usual weighted Sobolev spaces defined by 
$$
H^{s,\sw}(\RR^n) =  \{ f \in \mathcal{S}'(\RR^n) \mid\ang{D}^s  \ang{z}^\sw  f \in L^2(\RR^n) \}.
$$
Here $\ang{D}^s$ is the Fourier multiplier, given by $\mathcal{F}^{-1} \ang{\zeta}^s \mathcal{F}$. These can be equivalently defined by the condition that $f \in H^{s,\sw}(\RR^n)$ if and only if $Af \in L^2$ for all $A \in \Psi_{\text{sc}}^{s,\sw}(\RR^n)$; it is enough to require this for just one $A$ that is `totally elliptic', that is, both its symbol at fibre-infinity and at spatial infinity are everywhere elliptic, or equivalently, $\Ell(A) = \partial(\bbb \times \bbb)$. We use this characterization to define the Sobolev spaces for variable orders: we say that $f \in H^{s, \sfr}(\RR^n)$ if $Af \in L^2$ for all $A \in \Psi_{\text{sc}}^{s, \sfr}(\RR^n)$; again, it is enough to require this for one totally elliptic operator. If in addition, $A$ is invertible, that is, $A^{-1} \in \Psi_{\text{sc}}^{-s, -\sfr}(\RR^n)$, then the norm of $f$ in $H^{s,\sfr}(\RR^n)$ can be taken to be $\| Af \|_{L^2}$.  

The Sobolev spaces for an asymptotically conic manifold are defined
analogously: we say $f \in H^{s, \sfr}(M^\circ)$ if $Af \in L^2$
for all $A \in \Psi_{\text{sc}}^{s, \sfr}(M^\circ)$;  it is enough to require this
for one totally elliptic operator. If in addition, $A$ is invertible,
that is, $A^{-1} \in \Psi_{\text{sc}}^{-s, -\sfr}(M^\circ)$, then the norm of $f$
in $H^{s,\sfr}(M^\circ)$ can be taken to be $\| Af \|_{L^2}$.  

Pseudodifferential operators of variable order act on Sobolev spaces with variable order in the expected way: if $A \in \Psi_{\text{sc}}^{s, \sfr}(M^\circ)$, then $A$ is a bounded linear map from $H^{s', \sfr'}(M^\circ)$ to $H^{s'-s, \sfr' - \sfr}(M^\circ)$. Also, the dual space of $H^{s, \sfr}(M^\circ)$ is $H^{-s, -\sfr}(M^\circ)$. The duality between these two spaces can be realized by choosing any invertible $A \in \Psi_{\text{sc}}^{s, \sfr}(M^\circ)$. Then for $u \in H^{s, \sfr}(M^\circ)$, $v \in H^{-s, -\sfr}(M^\circ)$ we define 
$$
(u, v) \defeq \ang{ A u, A^{-1} v }_{L^2},
$$
It is easy to check that this pairing is independent of the particular invertible operator $A \in \Psi_{\text{sc}}^{s, \sfr}(M^\circ)$. 

We now define the spaces $ \mathcal{X}^{s,\sfr_+}$ and $\mathcal{Y}^{s,\sfr_+}$. We have already discussed in the Introduction that $\mathcal{Y}^{s,\sfr_+}$ is precisely the variable order Sobolev space $H^{s, \sfr_+}$ as defined above, for a variable spatial weight $\sfr_+$ with specific properties on $\Sigma(P)$ (the behaviour away from $\Sigma(P)$ is unimportant). First, we require that, for some small $\delta > 0$, 
\begin{equation}\label{weight-radial}
\begin{gathered}
 \text{$\sfr_+$ takes values in $[-1/2 - \delta, -1/2 + \delta]$, } \\
 \text{and is equal to $-1/2 \mp \delta$ in a neighbourhood of $\mathcal{R}_\pm$.}
 \end{gathered}
\end{equation}
This ensures elements of $\mathcal{Y}_+^{s,\sfr_+}$ are permitted to have outgoing oscillations of the form $r^{-(n-1)/2} e^{i\lambda r}$ but not incoming oscillations of the form $r^{-(n-1)/2} e^{-i\lambda r}$. Second, we require that 
\begin{equation}\label{weight-monotone}
\text{ $\sfr_+$ is nonincreasing along the Hamilton flow of $P$ within $\Sigma(P)$.}
\end{equation}
Since bicharacteristics within $\Sigma(P)$ start at $\mathcal{R}_-$ and end at $\mathcal{R}_+$, these two conditions are compatible. We also define
\begin{equation}\label{weight-pm}
\sfr_- = -1 - \sfr_+.
\end{equation}
This automatically means that $\sfr_-$ has analogous properties to $\sfr_+$ with the incoming and outgoing radial sets swapped. In particular, we have 
\begin{equation}\label{weight-radial-minus}\begin{gathered}
 \text{$\sfr_-$ takes values in $[-1/2 - \delta, -1/2 + \delta]$,} \\
\text{  is equal to $-1/2 \pm \delta$ in a neighbourhood of $\mathcal{R}_\pm$,} \\
 \text{ and is nondecreasing along the Hamilton flow of $P$ within $\Sigma(P)$.}
 \end{gathered}\end{equation}

\begin{rem}
Condition \eqref{weight-monotone} is imposed so that regularity of approximate solutions of $P u = 0$ can be propagated from the incoming radial set $\mathcal{R}_-$ towards the outgoing radial set $\mathcal{R}_+(\lambda)$, in the following section. 
\end{rem}

We then define the spaces $\mathcal{X}^{s,\sfr_\pm}$ by  \begin{equation}
  \label{eq:44}
  \mathcal{X}^{s,\sfr_\pm} \defeq \large\{ u \in H^{s,\sfr_\pm} \mid  
  Pu \in H^{s - 2, \sfr_\pm + 1} \large\},
\end{equation}
with norm 
\begin{equation}
\| u \|^2_{\mathcal{X}^{s,\sfr_\pm}} = \| u \|^2_{H^{s,\sfr_\pm}} + \| Pu \|^2_{H^{s - 2, \sfr_\pm + 1}}.
\end{equation}

\subsection{Test modules and Sobolev spaces with module regularity}
We next introduce the `test modules' with respect to which we will assume further
differentiability.  A test module $\mathcal{M}$, as defined in \cite{HMV2004}, is a subspace of 
$\Psi_{\text{sc}}^{1,1}(\RR^n)$, or $\Psi_{\text{sc}}^{1,1}(M^\circ)$ in the general case, that is closed under commutators, that contains the identity and is a module over $\Psi_{\text{sc}}^{0,0}$. (Here we adapt the definition of \cite{HMV2004} slightly to allow order 1 in the fibre as well as the spatial slot, as is convenient here.) We shall also work only with finitely generated modules $\mathcal{M}$, which have the form 
$$
\mathcal{M} = \Big\{ \sum_{j=0}^N C_j A_j \mid C_j \in \Psi_{\text{sc}}^{0,0}(M^\circ) \Big\}, 
$$
for some fixed finite set $A_0 = \Id, A_1, \dots A_N \subset \Psi_{\text{sc}}^{1,1}(M^\circ)$, the \emph{generators} of the module, which should be closed under taking commutators in the sense that 
$$
[A_j, A_k] = \sum_{l=0}^N E_{jkl} A_l \quad E_{jkl} \in \Psi_{\text{sc}}^{0,0}(M^\circ).
$$


The most important modules for us will be the modules
$\mathcal{M}_{\pm \lambda}$ defined by the
characteristic condition
\begin{equation}
\mathcal{M}_\pm := \mathcal{M}_{\pm \lambda} = \{ A \in \Psi_{\text{sc}}^{1,1} \colon \mathcal{R}_\pm
\subset \Sigma_{1,1}(A) \},\label{eq:real-modules}
\end{equation}
where $\mathcal{R}_\pm$ are the radial sets in \eqref{def-radial}.
Before discussing these in detail, we note that they are examples of
what will become (for us) a useful general class of modules
$\mathcal{M}_\mmuu$ also defined by a characteristic condition given
in terms of a real parameter $\mmuu$.  Recalling the coordinates $(x,
y, \nu, \mu)$ defined near the boundary $\p M = \{ x = 0 \}$ with
respect to local coordinates $y$ on an open set $O \subset \p M$, with
$(x,y)$ adapted coordinates on $M$, let
$$
\mathcal{R}_\mmuu = \{ x = 0 , \ |\mu|_y = 0,\ \nu = \mmuu \}.
$$
Then we define
\begin{equation}\begin{gathered}
\mathcal{M}_\mmuu := \{ A \in \Psi_{\text{sc}}^{1,1} : \mathcal{R}_\mmuu \subset
\Sigma_{1,1}(A) \}.
\end{gathered}\end{equation}
Writing $\Diff^{1,1} \subset \Psi_{\text{sc}}^{1,1}$ for the subspace of \emph{differential} operators, one can then choose a generating set for these modules containing three
types of operators
\begin{enumerate}
\item   $A_N \in \Diff^{1,1}(M)$ and,
  $$
  A_N = r(D_r - \mmuu) = - x D_x - \frac{\mmuu}{x}, \mbox{ on } x \le c,
  $$
  where $x \le c$ on our fixed collar neighborhood of $\p M$.
\item $A_j \in \Diff^{1,1}(M)$, $j = 1, \dots, N_1$ which are
  \emph{purely angular} in the sense that they are in the
  $C^\infty(M)$ linear span of the $D_{y_p}$ for some coordinates $(y_p)$ on $\p M$ and adapted coordinates $(x,y)$ on $M$, and lastly
\item $A'_k \in \Diff^1(M^\circ)$, $k = 1, \dots, N_2$ which
  are supported in $\{ x > c \}$, so
  in particular $A'_k \in \Psi_{\text{sc}}^{1, \infty}$,
\end{enumerate}
and using these one has
\begin{equation}\begin{gathered}\label{eq:og-mod-def}
\mathcal{M}_\mmuu \defeq \Big\{ C_0 + \sum_{j=1}^{N_1} C_j A_j +
\sum_{k=1}^{N_2} C_k' A_k' + C_N A_N\mid C_j, C_k' \in \Psi_{\text{sc}}^{0,0}(M^\circ) \Big\}. 
\end{gathered}\end{equation}
Here $N_1 + N_2 + 1= N$, i.e.\ there are $N + 1$ total generators
including $A_0$.
(To be overly concrete, one can cover the boundary with $m$
coordinate charts $O_m$ with coordinates $y_i^{(q)}, i = 1, \dots, n-1,
q = 1, \dots m$, with the $(y_i^{(q)})_i$ coordinates on $O_q$, and
let $A_j = A_{i, q} = \chi_q \p_{y_i^{(q)}}$  for the $\chi_q$ a
partition of unity subordinate to $O_1, \dots, O_m$, and then choose the $A'_k$
be any finite family of vector fields for which, for all $q \in T^*
M^{\circ}$ with $x(q) > c$, there is a $A'_k$ with $\sigma(A_k')(q)
\neq 0$.)
In the case $\mmuu = 0$, these generators form a basis (over
$C^\infty(M)$) of the b-vector fields, that is, all vector fields
tangent to the boundary of $M$ (in the case of $\RR^n$, this includes
all constant coefficient vector fields times a factor $r$).

We note that in particular the $A_j$ and $A_k'$ are all elements of
$\mathcal{M}_\mmuu$ (this is not a requirement for test modules in
other contexts, e.g.\ \cite{Haber-Vasy:Radial}).  The operators $A_N$
in (i)
and $A_j$ in (ii) taken together have the feature that for any point
$\mathsf{q} \in \Sigma_{2,0}(P) \setminus \mathcal{R}_+$ there is an element in
the module $A = \sum C A_N + \sum_{j = 1}^{N_1} C_j A_j$ such that
$\sigma_{1,1}(A)(\mathsf{q}) \neq 0$, i.e.\ $\mathsf{q} \in \Ell_{1,1}(A)$.  Indeed,
using adapted coordinates $(x, y, \nu, \mu)$ for one of our coordinate
charts $O_q$ and writing $\mathsf{q} = (0, y,
\nu, \mu)$, $\nu^2 + |\mu|^2 = \lambda$, if $\mu \neq 0$ then we can
choose a vector field $V := \chi c_i D_{y_i}$ with $\sigma_{\p
  M}(V)(\mathsf{q}) \neq 0$ where this is the standard symbol of a
vector field on a closed manifold and $\chi$ is supported in $O_\q$ and
$c_i \in \mathbb{R}$.  Then $\sigma_{1,1}(V) =
\sigma_{1,0}(r^{-1}V) = (|\nu|^2 + |\mu|^2)^{-1} \chi \sum c_i \mu_i$, and thus
$$
\sigma_{\text{base}, 1,1}(V) = \chi c_i \mu_i,
$$
in particular $\sigma_{\text{base}, 1, 1}(V)(\mathsf{q}) \neq 0$.
Similarly
$$
\sigma_{\text{base}, 1, 1}(A_N) = \nu - \lambda,
$$
so $A_N$ is elliptic on the whole of $\mathcal{R}_-$.

In the case that $\mmuu = \pm \lambda$, the test module
$\mathcal{M}_{\pm \lambda}$ will be abbreviated $\mathcal{M}_\pm$, as
in \eqref{eq:real-modules}. These test modules can be characterized as all scattering pseudodifferential operators of order $(1,1)$ that are characteristic at the radial set $\mathcal{R}_\pm(\lambda)$ given by \eqref{def-radial}. 
Analytically, the significance of these two modules is that all the generators annihilate the corresponding radial oscillation, $e^{\pm i \lambda r}$.

For any $\mmuu \in \RR$, we can then define the weighted Sobolev
spaces $H^{s,\sw;\kappa}_{\mathcal{M}_\mmuu}$ with module regularity of order
$\kappa$ with respect to $\mathcal{M}_\mmuu$. 
These consist of functions in
$H^{s,\sw}$ that remain in this space under the composition of any $\kappa$
elements in $\mathcal{M}_\mmuu$. For $\mmuu = \pm \lambda$, these
spaces will be denoted $H_{\pm }^{s,\sw; \kappa}$ for brevity.  A distribution $u$ will lie
in $H_{\mathcal{M}_\mmuu}^{s,\sw;\kappa}$ if and only if $u \rvert_{x > c}
\in H^{s + \kappa}$ and for any adapted coordinate system $(x, y)$ we have
$$
(r(D_r - \mmuu))^j D^\beta_y  u \in H^{s,\sw} \text{ whenever } j + |\beta|
   \le \kappa.
   $$
To impart the structure of a Hilbert space to $H^{s,\sw;
  \kappa}_{\mathcal{M}_\mmuu}$ we use the generators $\mathsf{A}_j, j = 0,
\dots, N$ of $\mathcal{M}_\mmuu$, where the $\mathsf{A}_j$ run over
all the $A_j$ and $A_k'$ in the definition of $\mathcal{M}_\mmuu$
above.  Then, using standard
multi-index notation $\mathsf{A}^\alpha = \mathsf{A}_0^{\alpha_0} \dots \mathsf{A}_N^{\alpha_N}$,
$\alpha \in \Nat^{N + 1}$, we define\begin{equation}  \label{eq:mod reg sob spaces}
  \begin{gathered}
   H_{\mathcal{M}_\mmuu}^{s,\sw;\kappa}  \defeq \big\{u \in H^{s,\sw} : \mathsf{A}^\alpha u
   \in H^{s,\sw} \text{ whenever } |\alpha| \le \kappa \big\}, \\
    \|u\|^2_{H^{s, \sw; \kappa}_{\mathcal{M}_\gamma}} \defeq  \sum_{|\alpha| \le \kappa}
    \|\mathsf{A}^\alpha u  \|^2_{H^{s, r}}.
  \end{gathered}
\end{equation} 

In particular, when $\mmuu=\pm\lambda$, we define
\begin{equation}  \label{eq:mod reg sob spaces1}
  \begin{gathered}
H_{\pm}^{s,\sw;\kappa}  \defeq H_{\mathcal{M}_{\pm\lambda}}^{s, \sw ;\kappa}  .
  \end{gathered}
\end{equation}

As well as the modules $\mathcal{M}_\mmuu$, we shall need to consider
the smaller module $\mathcal{N} \subset \mathcal{M}_\mmuu$ (for any
$\mmuu$) generated only by the purely angular and purely interior
derivatives, i.e. in the notation preceding  \eqref{eq:og-mod-def},
only the generators $A_0 = \Id$, the purely angular $A_j$ for $j = 1,
\dots, N_1$ and the interior $A_k'$, $k = 1, \dots,
N_2$, 
\begin{equation}
  \label{eq:y-module}
  \mathcal{N} = \Big\{ C_0 +  \sum_{j=1}^{N_1} C_j A_j +
\sum_{k=1}^{N_2} C_k' A_k' \mid C_j, C_k' \in \Psi_{\text{sc}}^{0,0}(M^\circ) \Big\}. 
\end{equation}
In direct analogy with $H^{s,\sw;\kappa}_{\mathcal{M}_\mmuu}$, writing the
generators of $\mathcal{M}_{\mmuu}$ as $\mathsf{A}_j$, $j = 0, \dots, N$ and
those of $\mathcal{N}$ as $\mathsf{B}_k$, $k = 0, \dots, N-1$, we define $u \in H_{\mathcal{M}_\mmuu}^{s,\sw; \kappa, \smo}$ if an only if $u \rvert_{x > c}
\in H^{s + \kappa + \smo}$, and for any adapted coordinate system $(x, y)$ we have

\begin{equation}  \label{eq:double-module-sob}
 \begin{gathered}
  H^{s, \sw; \kappa, \smo}_{\mathcal{M}_\mmuu} \defeq \{ u \in H^{s,\sw} : \mathsf{A}^\alpha \mathsf{B}^\beta u
  \in H^{s,\sw} , \  |\alpha| \le \kappa,  |\beta| \le k \}, \\
      \|u\|^2_{H^{s, \sw; \kappa, \smo}_{\mathcal{M}_\mmuu} }\defeq  \sum_{ |\alpha| \le \kappa,  |\beta| \le \smo}
    \|\mathsf{A}^\alpha \mathsf{B}^\beta u  \|^2_{H^{s, \sw}}.
    \end{gathered}
\end{equation}

In particular, for $\mmuu=\pm\lambda$, we put
\begin{equation}  \label{eq:double-module-sob1} 
 \begin{gathered}
H_{\pm}^{s,\sw;\kappa,\smo}  \defeq H_{\mathcal{M}_{\pm\lambda}}^{s,\sw;\kappa,\smo}  .
  \end{gathered}
\end{equation}

Notice that we have the simple relation between these spaces:
\begin{lemma}\label{lem:mu-change}
  Let $\mmuu, \mmuu' \in \mathbb{R}$.
  \begin{equation}
    \label{eq:changing-modules}
H^{s,\sw;\kappa, \smo}_{\mathcal{M}_{\mmuu'}} =     e^{i (\mmuu' - \mmuu)r }H_{\mathcal{M}_{\mmuu}}^{s,\sw;\kappa,\smo}.
  \end{equation}
  \end{lemma}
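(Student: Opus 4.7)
The plan is to show that multiplication by $E := e^{i(\mmuu'-\mmuu)r}$ (using any smooth extension of $r$ from the collar $\{x \le c\}$ to $M^\circ$) induces the isomorphism \eqref{eq:changing-modules}. It suffices to prove boundedness of $E\colon H^{s,\sw;\kappa,\smo}_{\mathcal{M}_\mmuu} \to H^{s,\sw;\kappa,\smo}_{\mathcal{M}_{\mmuu'}}$, as the converse follows by exchanging $\mmuu$ and $\mmuu'$ and replacing $E$ by $E^{-1}$.

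The starting observation is that the modules $\mathcal{M}_\mmuu$ and $\mathcal{M}_{\mmuu'}$ share the same purely angular and purely interior generators $A_j, A_k'$ in \eqref{eq:og-mod-def}, and the module $\mathcal{N}$ in \eqref{eq:y-module} is independent of $\mmuu$ altogether; the only $\mmuu$-dependence is in $A_N^{(\mmuu)} = r(D_r - \mmuu) = -xD_x - \mmuu/x$ on the collar. The core commutation identity is
\begin{equation*}
A_N^{(\mmuu')}\circ E = E\circ A_N^{(\mmuu)} \qquad \text{on } \{x\le c\},
\end{equation*}
which is a one-line computation from $D_r(e^{i(\mmuu'-\mmuu)r}v) = e^{i(\mmuu'-\mmuu)r}(D_r + (\mmuu'-\mmuu))v$. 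For each purely angular $A_j$, $[A_j, E] = 0$ on the collar since $r$ is independent of $y$; for each interior $A_k'$ (supported in $\{x > c\}$), $E$ is smooth with bounded derivatives there and $[A_k', E]$ is a compactly supported bounded $\Psi_{\text{sc}}^{0,0}$ multiplier.

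Next I would verify that $E$ is a bounded multiplier on $H^{s,\sw}$ for every real $s$ and constant $\sw$. For $s = 0$ this is immediate since $|E| = 1$. For $s \in \Nat$, the key point is that every scattering vector field $V$ has $|Vr|$ uniformly bounded (explicitly, $x^2 D_x r = -1$ and $x D_{y_p} r = 0$ on the collar), so all iterated scattering derivatives of $E$ are uniformly bounded. Duality and interpolation then yield boundedness for all real $s,\sw$.

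The proof then concludes by induction on total word length. For any multi-indices $|\alpha|\le\kappa$, $|\beta|\le\smo$, commuting $E$ leftward through $\mathsf{A}^{(\mmuu'),\alpha}\mathsf{B}^\beta$ exchanges each $A_N^{(\mmuu')}$ for $A_N^{(\mmuu)}$ exactly on the collar, passes each angular $A_j$ through freely, and generates compactly supported lower-order corrections from the interior and off-collar commutators. This yields an expression of the form
\begin{equation*}
\mathsf{A}^{(\mmuu'),\alpha}\mathsf{B}^\beta (Ev) = E\bigl(\mathsf{A}^{(\mmuu),\alpha}\mathsf{B}^\beta v\bigr) + E\cdot R(v),
\end{equation*}
where $R(v)$ is a finite sum of $\Psi_{\text{sc}}^{0,0}$-multipliers applied to shorter-word generator expressions in $v$. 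Taking $H^{s,\sw}$ norms and invoking the boundedness of $E$ yields the required inequality $\|Ev\|_{H^{s,\sw;\kappa,\smo}_{\mathcal{M}_{\mmuu'}}} \lesssim \|v\|_{H^{s,\sw;\kappa,\smo}_{\mathcal{M}_\mmuu}}$. The main care lies in the bookkeeping of off-collar commutator terms, but since these are compactly supported they present no analytical obstacle.
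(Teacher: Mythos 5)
Your proof is correct and takes essentially the same approach as the paper, whose entire argument is the conjugation identity $r(D_r - \mmuu')\circ e^{i(\mmuu'-\mmuu)r} = e^{i(\mmuu'-\mmuu)r}\circ r(D_r - \mmuu)$; your elaboration -- boundedness of the exponential multiplier on weighted Sobolev spaces, the $\mmuu$-independence of the angular and interior generators, and the induction over word length -- is sound. (One cosmetic slip: $x^2 D_x r = i$, not $-1$, in the paper's $D = -i\partial$ convention, though the boundedness conclusion is of course unaffected.)
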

  \begin{proof}
    This follows directly from the relation  $(D_r - \mmuu') e^{ i (\mmuu' - \mmuu)r} u = e^{ i( \mmuu' - \mmuu)r}(D_r - \mmuu) u$.
  \end{proof}
  
  We also note without proof the simple mapping property of scattering pseudodifferential operators on these spaces:
  
  \begin{lemma}\label{lem:psdo-mapping}
  Let $A \in \Psi_{\text{sc}}^{m, \ell'}(M)$. Then $A$ is a bounded operator
  \begin{equation}
  A : H_\pm^{s, \sw; \kappa, k} \to H_\pm^{s-m, \sw - \ell'; \kappa, k}.
  \end{equation}
  \end{lemma}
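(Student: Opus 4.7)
The plan is to prove the result by induction on the total module regularity order $\kappa + k$, with base case $\kappa = k = 0$ coinciding with the standard scattering calculus mapping property $A \colon H^{s,\sw} \to H^{s-m, \sw - \ell'}$ recorded in Section 2.3. The crucial enabling fact for the induction is the \emph{bi-filtered} nature of the scattering calculus: for any generator $\mathsf{G}$ of $\mathcal{M}_\pm$ or $\mathcal{N}$ (each of order $(1,1)$), the commutator $[\mathsf{G}, A]$ with $A \in \Psi_{\text{sc}}^{m, \ell'}$ lies not in $\Psi_{\text{sc}}^{m+1, \ell'+1}$ but in fact in $\Psi_{\text{sc}}^{m, \ell'}$, since principal-symbol cancellation forces a drop by one in \emph{both} orders simultaneously, cf.~\eqref{symbol-exp} and \eqref{pb}.

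Fixing $u \in H_\pm^{s, \sw; \kappa, k}$ and multi-indices $\alpha, \beta$ with $|\alpha| \le \kappa$, $|\beta| \le k$, the goal is to verify $\mathsf{A}^\alpha \mathsf{B}^\beta A u \in H^{s-m, \sw - \ell'}$. I would iterate the Leibniz identity
\[
[\mathsf{G}_1 \cdots \mathsf{G}_p, A] = \sum_{i=1}^p \mathsf{G}_1 \cdots \mathsf{G}_{i-1}\, [\mathsf{G}_i, A]\, \mathsf{G}_{i+1} \cdots \mathsf{G}_p
\]
to write $\mathsf{A}^\alpha \mathsf{B}^\beta A = A \mathsf{A}^\alpha \mathsf{B}^\beta + R$, and then repeatedly commute in $R$ until one obtains a finite sum
\[
\mathsf{A}^\alpha \mathsf{B}^\beta A = \sum_j B_j'\, \mathsf{A}^{\alpha_j} \mathsf{B}^{\beta_j}, \qquad B_j' \in \Psi_{\text{sc}}^{m, \ell'}(M),
\]
with $|\alpha_j| \le |\alpha|$ and $|\beta_j| \le |\beta|$. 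This rearrangement uses the bi-filtered commutator property (keeping each coefficient at orders $(m, \ell')$ throughout) together with the module-closure relations of $\mathcal{M}_\pm$ and $\mathcal{N}$ (to re-expand commutators of generators as $\Psi_{\text{sc}}^{0,0}$-combinations of generators of the appropriate module). Once this normal form is in hand, the conclusion is immediate: by the definition of $H_\pm^{s, \sw; \kappa, k}$, each $\mathsf{A}^{\alpha_j} \mathsf{B}^{\beta_j} u \in H^{s, \sw}$, and then $B_j' \mathsf{A}^{\alpha_j} \mathsf{B}^{\beta_j} u \in H^{s-m, \sw - \ell'}$ by the base-case mapping property.

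The hard part is not an analytic estimate but the combinatorial bookkeeping in the second step: verifying that, during the process of commuting single commutators past strings of generators, one does not inflate the $\mathcal{M}_\pm$-count beyond $\kappa$, nor the $\mathcal{N}$-count beyond $k$, and that each intermediate coefficient operator remains in $\Psi_{\text{sc}}^{m, \ell'}$. Both points are built into the definition of $\mathcal{M}_\pm$ and $\mathcal{N}$ as test modules (with $\mathcal{N} \subset \mathcal{M}_\pm$ closed under commutators), and into the bi-filtration of the scattering calculus, so the argument is structural rather than computational.
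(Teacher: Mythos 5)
The paper itself states this lemma explicitly ``without proof'' (the text immediately preceding it reads ``We also note without proof the simple mapping property of scattering pseudodifferential operators on these spaces''), so there is no internal argument against which to compare. Your commutator argument is correct and is exactly the expected one for this kind of statement: the load-bearing observation is that $[\mathsf{G}, A] \in \Psi_{\text{sc}}^{m,\ell'}$ whenever $\mathsf{G} \in \Psi_{\text{sc}}^{1,1}$ and $A \in \Psi_{\text{sc}}^{m,\ell'}$, which is precisely the bi-filtration of the scattering calculus (composition is additive in both orders, commutators drop one in both) recorded in Section~2.1, and iterating this through the Leibniz expansion yields the normal form $\mathsf{A}^\alpha\mathsf{B}^\beta A = \sum_j B_j' \mathsf{A}^{\alpha_j}\mathsf{B}^{\beta_j}$ with $B_j' \in \Psi_{\text{sc}}^{m,\ell'}$, $|\alpha_j|\le|\alpha|$, $|\beta_j|\le|\beta|$, from which the mapping property is immediate.

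Two small remarks. First, the induction on $\kappa+k$ is not really needed: once the normal form is established, you apply the base case ($\kappa=k=0$) directly to each summand, so no inductive hypothesis is invoked. Second, in the rearrangement you only ever \emph{delete} generators (each commutator with $A$ or with an iterated commutator of $A$ consumes one generator and produces a $\Psi_{\text{sc}}^{m,\ell'}$ coefficient), and the surviving generators are pushed to the right retaining their relative order, so they remain of the form $\mathsf{A}^{\alpha_j}\mathsf{B}^{\beta_j}$ with all $\mathcal{M}_\pm$-generators before all $\mathcal{N}$-generators. As a result, the module-closure relations (which would be needed to re-expand $[\mathsf{G}_i, \mathsf{G}_j]$) are not actually invoked in this particular argument; citing them is harmless but superfluous.
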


The module  $\mathcal{N}$ enjoys an important
vanishing property at the radial sets, which we describe now. Returning to the general situation for a moment, let
$\mathcal{M}$ be any test module, generated by $A_0 = \Id, A_1, \dots, A_N$, and let $P$ be the Helmholtz operator on $\RR^n$ or the generalized Helmholtz operator on $M^\circ$. 
 We shall say that $\mathcal{M}$ is
 \textbf{$P$-critical} at the subset $S \subset \Sigma(P)$ if there exist $C_{jk} \in
   \Psi_{\text{sc}}^{1,0}(M)$ with
  \begin{equation} \label{off diag symbols vanish}
i x^{-1} [A_j, P] = \sum_{k = 0}^N C_{jk} A_k \quad \text{ with } \quad 
 \sigma_{1,0}(C_{jk}) = 0 \text{ on $S$ for all $j, k$. }
\end{equation}
%


\begin{lemma}\label{thm:posititivy-criticality}
  The module
  $\mathcal{N}$ is $P$-critical at both radial sets $\mathcal{R}_+
  \cup \mathcal{R}_-$.
\end{lemma}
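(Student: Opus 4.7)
My plan is to verify condition \eqref{off diag symbols vanish} separately for the interior generators $A_k'$ (supported in $\{x > c\}$) and the purely angular generators $A_j = \chi_q D_{y_{j'}^{(q)}}$.

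For the interior generators, since both $A_k'$ and $P$ are differential operators and $A_k'$ is supported in $\{x > c\}$, the commutator $[A_k', P]$ is a differential operator of order at most $2$ supported in $\{x > c\}$, and $ix^{-1}[A_k', P]$ has the same support. The principal-type condition on the interior generators---that the symbols $\sigma(A_l')$ span the fibre direction over every $\mathsf{q} \in T^*M^\circ$ with $x(\mathsf{q}) > c$---allows me to write
\[
ix^{-1}[A_k', P] = \sum_{l \ge 1} C_{kl}' A_l' + C_{k0}' \cdot \mathrm{Id},
\]
with each $C_{kl}', C_{k0}' \in \Psi_{\text{sc}}^{1,0}(M^\circ)$ supported in $\{x > c'\}$ for some $c' \in (0, c)$. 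Since $\mathcal{R}_\pm \subset \{x = 0\}$, their symbols vanish identically in a neighbourhood of the radial sets.

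The main case is the angular generators. The strategy is to show that $[A_j, P]$ carries an overall factor of $x^2$, so that $x^{-1}[A_j, P]$ carries a factor of $x$, which together with the $\mu$-factor arising from the angular structure will force vanishing of the coefficient symbols at the radial sets. Using the adapted coordinate form \eqref{ac-metric} of the metric, which has no $dx\,dy$ cross terms, a direct computation yields
\[
\Delta_g = -(x^2 D_x)^2 + i(n-1)\, x (x^2 D_x) + x^2 B_2 + x^3 B_1,
\]
where $B_2 \in \mathrm{Diff}^2(M)$ is purely angular (containing the $D_{y_j} D_{y_k}$ and first-order angular terms from $\Delta_{h(x,\cdot)}$) and $B_1$ is a b-operator of order $\leq 1$. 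Because $x$ is independent of $y$, the operators $x^2 D_x$ and $x$ commute with $D_{y_{j'}^{(q)}}$, so $[D_{y_{j'}^{(q)}}, P]$ reduces to commutators with $B_2$, $B_1$, $V$, and the cutoff $\chi_q$, each of which inherits the prefactor $x^2$ (recall $V \in x^2 C^\infty(M)$). Hence $[A_j, P] = x^2 Q_j$ for some $Q_j \in \Psi_{\text{sc}}^{2,2}(M^\circ)$ whose second-order part is purely angular.

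Finally, I would decompose $ix^{-1}[A_j, P] = i x Q_j$ by writing its leading second-order angular part as $-i(\partial_{y_{j'}^{(q)}} h^{jk})(x D_{y_j}) \cdot D_{y_k}$, and setting $C_{jk} := -i(\partial_{y_{j'}^{(q)}} h^{jk})(x D_{y_j}) \in \Psi_{\text{sc}}^{1,0}(M^\circ)$, paired with the module generator $A_k = D_{y_k}$ (cut off to the support of $\chi_q$). The principal symbol of $C_{jk}$ at order $(1,0)$ is $-i(\partial_{y_{j'}^{(q)}} h^{jk})\, \mu_j$, which vanishes on $\mathcal{R}_\pm \subset \{\mu = 0\}$. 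All remaining contributions---from the first-order angular part of $B_2$, the $B_1$ term, the potential $V$, and the cutoff commutator $[\chi_q, P]$---carry at least one extra factor of $x$ after division by $x$, so they lie in $\Psi_{\text{sc}}^{1,-1}(M^\circ) \subset \Psi_{\text{sc}}^{1,0}(M^\circ)$ with trivial principal symbol at order $(1,0)$; they can be absorbed into $C_{j0} \cdot \mathrm{Id}$, and $\sigma_{1,0}(C_{j0})$ vanishes at $x = 0$ and in particular on $\mathcal{R}_\pm$. The main obstacle is purely organisational---keeping track of the several lower-order pieces---but in each case the extra factor of $x$ (or of $\mu_j$) guarantees the required vanishing of $\sigma_{1,0}(C_{jk})$ at the radial sets.
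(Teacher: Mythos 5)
Your proof is correct and follows essentially the same route as the paper's: decompose $P$ near the boundary into radial and angular pieces, observe that $[D_{y_j},P]$ picks up an overall factor of $x^2$, and then check that after dividing by $x$ the resulting coefficients $C_{jk}$ carry either a residual $x$ or an $x\partial_{y_l}$ factor whose principal symbol $\mu_l$ vanishes on $\mathcal{R}_\pm$; the paper handles the interior generators in one sentence (the condition is vacuous there) while you give slightly more detail, but the argument is the same. (Minor bookkeeping slips: the sign in your expansion should be $+(x^2D_x)^2$, not $-(x^2D_x)^2$, to match the convention $\Delta = -\sum\partial_j^2$ and $D_r = x^2 D_x$; these do not affect the argument.)
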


\begin{proof}
Consider the commutators with
generators of $\mathcal{N}$. The condition is trivially satisfied for generators microsupported away from the boundary of $M$, since then \eqref{off diag symbols vanish} is vacuous for $S = \mathcal{R}_\pm$, and the commutator is a differential operator of order $2$, so can certainly be written in the form \eqref{off diag symbols vanish}. 

It remains to check the commutator with tangential derivatives $D_{y_j}$ near the boundary. 
The operator $P$ takes the form near the boundary, in adapted coordinates $(x, y)$, 
\begin{equation}\label{P-adapted}
(x^2 D_x)^2 + i(n-1) x (x^2 D_x) + x^2 Q + x^2 \tilde Q-\lambda^2,
\end{equation}
where $Q$ is a differential operator of order $2$ in the tangential
derivatives $\partial_{y_j}$ with coeff\-icients smooth on
$M$ (in fact, it is precisely the tangential Laplacian, $\Delta_{h(x)}$ for $\partial M$ with metric $h(x)$), and $\tilde Q  \in \Diff^{1,0}$ (in fact, it has a purely radial component, obtained by differentiating $h(x)$ in $x$). 
The commutator with $D_{y_j}$ is an operator of the form 
$$
ix^{-1} [D_{y_j}, P] = i x [D_{y_j}, Q] + ix[D_{y_j}, \tilde Q].
$$
The term $ix [D_{y_j}, Q]$ is a second order differential operator in the tangential variables times $x$, therefore takes the form of a sum of terms of
the form $C_{jk} A_k$, for $1 \leq k \leq n-1$, where $C_{jk}$ is
equal to $\sum_l b_l x \partial_{y_l}$, where $b_l \in
C^\infty(M)$. Since the symbol of $x \partial_{y_l}$ vanishes at
$\mathcal{R}_+$, this satisfies the conditions of $P$-criticality. The term $ix[D_{y_j}, \tilde Q]$ is a scattering differential operator of order $(1,0)$ times $x$, which also satisfies the condition for $P$-criticality.

\end{proof}

By analogy with the spaces $\mathcal{X}^{s,\mathsf{l}_+}$ and $\mathcal{Y}^{s,\mathsf{l}_+}$, we define
\begin{equation}
\mathcal{Y}_+^{s, \sw ; \kappa, \smo} = H_+^{s, \sw ; \kappa, \smo} 
\end{equation}
and
\begin{equation}
  \label{eq:Xsrkl}
  \mathcal{X}_+^{s,\sw; \kappa, \smo} \defeq \large\{ u \in H_+^{s,\sw; \kappa, \smo} \mid  
  Pu \in H_+^{s - 2, \sw + 1; \kappa, \smo} \large\}
\end{equation}
with norm 
\begin{equation}
\| u \|^2_{\mathcal{X}_+^{s, \sw; \kappa, \smo}} = \| u \|^2_{H_+^{s, \sw; \kappa, \smo}} + \| Pu \|^2_{H_+^{s - 2, \sw + 1; \kappa, \smo}}.
\end{equation}

The main technical result of this paper is the following mapping property of the Helmholtz operator $P$ on these spaces with module regularity: 

\begin{thm}\label{thm:Fred prop}
  Let $s, \sw \in \mathbb{R}$, and assume $\sw
  \in (-3/2, -1/2)$.  For any natural numbers $\kappa \ge 1$, $k \ge 0$, the map
  \begin{equation}
    \label{eq:real-mod-map}
      P \colon \mathcal{X}^{s,\sw; \kappa, \smo}_{\pm} \lra \mathcal{Y}^{s-2,
    \sw+1; \kappa, \smo}_{\pm}
  \end{equation}
is an isomorphism of Hilbert spaces.  In particular the inverse map,
i.e., the outgoing ($+$), resp. incoming ($-$), resolvent is bounded
as a map
\begin{equation}
  \label{eq:resolvent-bound}
  R(\lambda \pm i0) \colon H^{2-s, \sw + 1; \kappa, \smo}_\pm \lra H^{s, \sw; \kappa, \smo}_\pm.
\end{equation}
  \end{thm}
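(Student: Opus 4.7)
The strategy is to bootstrap from the known invertibility of
\[
P \colon \mathcal{X}^{s,\sfr_\pm} \to \mathcal{Y}^{s-2,\sfr_\pm+1},
\]
on variable-order spaces (due to Vasy) to the module-regularity refinement. The first step is to check compatibility of the spaces: for any $u \in H^{s,\sw;\kappa,\smo}_+$ with $\sw \in (-3/2,-1/2)$ and $\kappa \ge 1$, elements have the enhanced decay at the incoming radial set $\mathcal{R}_-$ coming from the generator $A_N = r(D_r - \lambda)$ (which acts on incoming oscillations $e^{-i\lambda r}$ as multiplication by a nonzero constant times $r$, forcing an extra order of decay to stay in $H^{s,\sw}$), while at $\mathcal{R}_+$ the weight $\sw$ alone controls the oscillation. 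This gives a continuous inclusion $\mathcal{X}^{s,\sw;\kappa,\smo}_+ \hookrightarrow \mathcal{X}^{s,\sfr_+}$ for a suitable variable weight $\sfr_+$ of the type described in \eqref{weight-radial}--\eqref{weight-monotone}. Injectivity of the map in the theorem then follows immediately from Vasy's result.

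For existence, given $f \in \mathcal{Y}^{s-2,\sw+1;\kappa,\smo}_+$, we use Lemma \ref{lem:psdo-mapping} and the weight analysis to embed $f \in \mathcal{Y}^{s-2,\sfr_++1}$, and invoke Vasy to obtain $u \in \mathcal{X}^{s,\sfr_+}$ with $Pu = f$. The core of the proof is to show that this $u$ in fact lies in $\mathcal{X}^{s,\sw;\kappa,\smo}_+$. We prove this by a double induction on $(\kappa,\smo)$, with the following inductive step: assuming $u$ has module regularity of order $(\kappa - 1, \smo)$ and all appropriate $(\kappa, \smo-1)$ estimates, we apply a generator $\mathsf{A}$ of $\mathcal{M}_+$ (or a generator $\mathsf{B}$ of $\mathcal{N}$ for the second index) and derive the equation
\begin{equation*}
  P(\mathsf{A}^\alpha \mathsf{B}^\beta u) \;=\; \mathsf{A}^\alpha \mathsf{B}^\beta f \;+\; [P, \mathsf{A}^\alpha \mathsf{B}^\beta] u.
\end{equation*}
The first term lies in $H^{s-2,\sw+1}$ by hypothesis. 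For the commutator term, we expand using the product rule and invoke the $P$-criticality of $\mathcal{N}$ from Lemma \ref{thm:posititivy-criticality}: each commutator $[P,A_j]$ with a purely angular or interior generator is $x$ times a combination $\sum C_{jk}A_k$ whose principal symbols $C_{jk}$ vanish on $\mathcal{R}_\pm$. This manufactures decay of the correct order and places commutator terms back in the inductive hypothesis. For the commutator with the special generator $A_N$, a direct computation using the form \eqref{ac-Hvf} shows $\mathsf{H}_p(\nu-\lambda) = 2(\lambda^2 - \nu^2)$ vanishes quadratically on $\Sigma(P)\cap\{\mu=0\}$, hence on both radial sets, so the corresponding commutator can again be expressed in terms of the module plus a remainder that is lower order.

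The main obstacle is the radial point estimate near $\mathcal{R}_+$: with constant weight $\sw < -1/2$, elements are allowed to carry the outgoing oscillation, so propagation of regularity to $\mathcal{R}_+$ must be carried out as a radial \emph{sink} estimate. The threshold condition $\sw \in (-3/2,-1/2)$ is exactly the range where this estimate gains enough powers of $x$ from the module generators to absorb the loss coming from $\sfr_+$ being above $-1/2$ at $\mathcal{R}_+$: one power of $r$-growth is gained from the commutator $[P,A_N]$ being one order lower than naively, converting the radial sink estimate into a genuine gain of one order of $x$ per application of $A_N$. Near $\mathcal{R}_-$ the corresponding radial source estimate is easier since the sign of $\sw$ relative to the threshold is on the favorable side once the module generator $A_N$ is applied. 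Iterating these estimates closes the induction in both indices $\kappa$ and $\smo$ and establishes $u \in \mathcal{X}^{s,\sw;\kappa,\smo}_+$ with a quantitative bound.

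Finally, the quantitative estimates obtained in the induction give the operator norm bound on $P^{-1}$ directly; alternatively, boundedness of the inverse follows from the open mapping theorem applied to the now-established bijection of Hilbert spaces. The $(-)$ case is identical after replacing $\mathcal{R}_+$ by $\mathcal{R}_-$ and reversing the role of sink/source, or equivalently by conjugating with $e^{2i\lambda r}$ via Lemma \ref{lem:mu-change}.
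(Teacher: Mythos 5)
Your high-level strategy matches the paper's: prove the variable-order invertibility, embed $\mathcal{X}^{s,\sw;\kappa,\smo}_{+}\hookrightarrow\mathcal{X}^{s,\sfr_+}$ (which is essentially Lemma~\ref{thm:containment-mod-var}), and bootstrap the extra module regularity by induction. Injectivity from the embedding and existence in $\mathcal{X}^{s,\sfr_+}$ also follow as you say. However, your treatment of the key difficulty --- the below-threshold radial point estimate at $\mathcal{R}_+$ with $\mathcal{M}_+$-regularity --- has a genuine gap.

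You assert that $\mathsf{H}_p(\nu-\lambda)=2(\lambda^2-\nu^2)$ ``vanishes quadratically'' on $\Sigma(P)\cap\{\mu=0\}$, and conclude that the commutator with $A_N=r(D_r-\lambda)$ is lower order. This is incorrect: $2(\lambda^2-\nu^2)=-2(\nu-\lambda)(\nu+\lambda)$ vanishes only \emph{linearly} at each radial set, so $ix^{-1}[A_N,P]$ decomposes as $\sum_k C_{Nk}A_k$ with a diagonal coefficient $C_{NN}$ whose symbol does \emph{not} vanish at $\mathcal{R}_\pm$. Consequently the full module $\mathcal{M}_\pm$ (unlike $\mathcal{N}$, cf.~Lemma~\ref{thm:posititivy-criticality}) is not $P$-critical, and the direct positive-commutator scheme does not close for products of $A_N$ with a fixed weight $\sw$: the diagonal contribution in \eqref{eq:C'alphaalpha} acquires a term proportional to $\alpha_N$ whose sign-verification and interplay with the off-diagonal terms (which also mix $A_N$ with the angular generators) you have not carried out. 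The paper avoids this entirely: it first runs the positive-commutator argument only for the $P$-critical module $\mathcal{N}$ (giving \eqref{BAest}), and then adds $\mathcal{M}_+$-regularity near $\mathcal{R}_+$ by factoring $P=(D_r+\lambda)(D_r-\lambda)+\cdots$ and inverting $D_r+\lambda$ microlocally, which is elliptic on $\WF'(B)$. Crucially this factorization argument \emph{trades} one order of $\mathcal{N}$-regularity for one order of $\mathcal{M}_+$-regularity (estimate \eqref{eq: low-reg mod-stronger}), which is the real reason the double index $(\kappa,\smo)$ appears and why the spaces need $\smo$ to be taken large. Your ``double induction'' does not account for this trade-off and would not close as stated.

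A second, lesser, imprecision: the restriction $\sw>-3/2$ is not about ``absorbing the loss coming from $\sfr_+$ being above $-1/2$ at $\mathcal{R}_+$'' --- it enters at the \emph{incoming} radial set $\mathcal{R}_-$, where $\mathcal{M}_+$ is elliptic (Corollary~\ref{cor:low}): $\kappa\geq1$ shifts the effective weight by one, so $\sw>-3/2$ becomes the above-threshold condition $\sw+1>-1/2$ needed to apply the source radial-point estimate there.
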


We defer the proof to Section~\ref{sec:Fredholm}.

  \subsection{Multiplicative properties of weighted Sobolev spaces with module
  regularity}
  
  We prove multiplicative properties of weighted Sobolev spaces on $\RR^n$, or more generally on asymptotically conic manifolds, with additional module regularity. We use the module $\mathcal{M}_0$ generated by b-vector fields, as that gives us the best multiplicative properties, and deduce more general multiplicative properties as a corollary.

\begin{lemma}\label{thm:algebra}
Let $\sw, \sw' \in \RR$, $s, \kappa, k \in \mathbb{N}_0$. If $\kappa \ge 1$ and $k \ge (n-1)/2$,
  multiplication on $C^\infty_c(M)$ extends to a bounded bilinear map 
    \begin{equation}
    \label{eq:mult}
    H^{s,\sw;\kappa, \smo}_{\mathcal{M}_0} \cdot     H^{s,\sw';\kappa, \smo}_{\mathcal{M}_0} \lra
    H^{s,\sw + \sw' + n/2;\kappa, \smo}_{\mathcal{M}_0}.
  \end{equation}
\end{lemma}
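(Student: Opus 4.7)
The plan is to reduce the claim to a Sobolev multiplication estimate on a cylinder near $\p M$. The essential observation is that $H^{s,\sw;\kappa,\smo}_{\mathcal{M}_0}$ is a weighted b-Sobolev space near the boundary, and after the logarithmic change of radial coordinate $t = -\log x$, it becomes an anisotropic standard Sobolev space on $\R_t \times \p M$. The weight gain of $n/2$ then accounts for the factor $r^{n-1}$ in the scattering volume form $dg \sim r^{n-1}\,dr\,dh$, which becomes $e^{nt}\,dt\,dh$ in the new variable and gets absorbed into the conjugation below.

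First I would use a partition of unity to decompose $uv$ into an interior piece supported away from $\p M$ and a collar piece supported in $\{x < c\}$. On the interior piece, b-vector fields are ordinary smooth vector fields, the spaces reduce to ordinary Sobolev spaces of order $s+\kappa+\smo$, and the standard Sobolev algebra property applies since $s+\kappa+\smo \geq (n+1)/2 > n/2$.

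For the collar piece, work on the half-cylinder $(-\log c, \infty)_t \times \p M$. Conjugate by setting $\tilde u(t,y) := e^{(\sw + n/2)t}\, u(e^{-t},y)$ and analogously $\tilde v$ with $\sw'$ in place of $\sw$, so that $u \in H^{0, \sw}$ iff $\tilde u \in L^2(dt\,dh)$, and the target condition $uv \in H^{0, \sw + \sw' + n/2}$ is precisely $\tilde u \tilde v \in L^2(dt\,dh)$. The b-generators $xD_x, D_{y_j}$ become $-D_t, D_{y_j}$ modulo a constant shift, so the module regularity $(\kappa, \smo)$ of $H^{0, \sw; \kappa, \smo}_{\mathcal{M}_0}$ translates to the anisotropic Sobolev condition $\p_t^a \p_y^\delta \tilde u \in L^2(dt\,dh)$ for $a \leq \kappa$ and $a + |\delta| \leq \kappa + \smo$. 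Via Fourier transform and Cauchy-Schwarz, this norm controls $\int |\hat{\tilde u}|\, d\tau\, d\eta$ and hence yields $\tilde u \in L^\infty_{\text{loc}}$, provided the dual weight is integrable; a direct computation reduces integrability to $\kappa > 1/2$ together with $\kappa + \smo > n/2$, both satisfied by the hypotheses $\kappa \geq 1$, $\smo \geq (n-1)/2$. A standard Moser--Leibniz estimate then closes the multiplication bound on the cylinder, establishing the base case $s = 0$.

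Finally, to include scattering regularity $s \in \mathbb{N}_0$, I would induct on $s$ using Leibniz for scattering vector fields: $V^\alpha(uv) = \sum_{\alpha_1 + \alpha_2 = \alpha}\binom{\alpha}{\alpha_1}(V^{\alpha_1}u)(V^{\alpha_2}v)$, where each $V^{\alpha_j} u \in H^{s - |\alpha_j|, \sw; \kappa, \smo} \subset H^{0, \sw; \kappa, \smo}$, reducing each term to the $s = 0$ case just proved. The main obstacle is the anisotropy of the module regularity on the cylinder---we have only $\kappa$ derivatives available in the radial direction but $\kappa + \smo$ in the angular direction---and verifying the $L^\infty_{\text{loc}}$ embedding at the borderline $\kappa = 1$, $\smo = (n-1)/2$ requires the anisotropic Cauchy--Schwarz argument above rather than a naive factorwise Sobolev embedding.
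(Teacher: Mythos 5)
Your proposal follows essentially the same route as the paper: decompose into an interior piece (handled by the standard Sobolev algebra, since $\kappa+\smo>n/2$) and a collar piece, conjugate the collar piece to the cylinder $\RR_t\times\p M$ by the exponential change of radial variable absorbing both the weight $\sw$ and the $r^{n-1}$ volume factor, reduce to an anisotropic $L^2$-Sobolev algebra statement there, and finally extend to $s\in\mathbb{N}$ by Leibniz for scattering vector fields. The single point of divergence is that the paper cites \cite[Lemma 4.4]{HVsemi} (the space $\cY^{\kappa,\smo}_1$ is an algebra when $\kappa>1/2$ and $\smo\ge (n-1)/2$) for the cylinder estimate, while you re-derive it by the Fourier/Cauchy--Schwarz argument; that is a minor difference of emphasis, not of method, and your stated sufficient conditions coincide with the cited ones for integer $\kappa\geq 1$, $\smo\geq(n-1)/2$.
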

Before the proof, we make some remarks concerning  spaces of distributions on $\mathbb{R}^n$ whose
regularity in an $L^2$-based Sobolev sense is a given order, $\kappa$,
with some additional order $k$ of regularity only in certain directions. 
Write $\RR^n = \RR^{d} \times  \mathbb{R}^{n - d}$, $z = (z',
z'')$, where $z' \in \RR^d$ and $z'' \in \RR^{n - d}$.  Define
\begin{equation}\label{eq:basicmodulereg}
\cY^{\kappa, k}_d(\RR^{d} \times \RR^ {n - d}) = \{ u :  \la \zeta \ra^\kappa \la \zeta^{''}
  \ra^k \hat{u}  \in L^2 \},
\end{equation}
with $\zeta = (\zeta', \zeta'')$.  Thus distributions in $\cY^{\kappa, k}$ have $\kappa$
total derivatives in $L^2$
and an additional $k$ derivatives in $z''$ in $L^2$.  For our
purposes, below we extend this definition to the case where one factor
is a closed manifold $\boldsymbol N$ of dimension $n-d$; for $\kappa, k \in \mathbb{N}_0$, and $d\mu_N$ a measure on $\boldsymbol N$,  
\begin{multline*}
\cY^{\kappa, k}_d(\RR^{d}_z \times \boldsymbol N) = \{ u \in L^2(\RR^{d}_z \times \boldsymbol N, dz \, d\mu_N) \mid D_z^\alpha A u \in
L^2(\RR^{d}_z \times \boldsymbol N, dz \, d\mu_N) , \\ \mbox{ for all } |\alpha|
\le \kappa, \ A \in \Diff^k(\boldsymbol N) \},
\end{multline*}
and below we will take $d=1$ and $\boldsymbol N = \p M$.

We will use
the following lemma, which is proven in \cite[Lemma~4.4]{HVsemi}
  \begin{lemma}\label{thm:basicalgebra}
    Let $\kappa, k \in \mathbb{R}$.  If $\kappa > d /2$ and $k \ge (n -
    d)/2$ then $\cY_d^{\kappa, k}(\RR^{d}_z \times \boldsymbol N)$ is an
    algebra. 
  \end{lemma}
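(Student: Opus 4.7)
My plan is to reduce the algebra property, via localization and a change of coordinates, to the model algebra result of Lemma~\ref{thm:basicalgebra}. The crucial $n/2$ gain in the weight arises entirely from the Jacobian of the change of variables relating the $L^2$ volume form on $M^\circ$ to a product measure on a collar of the boundary.

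Using a partition of unity, split the problem into an interior piece (supported where $x > c/2$) and a boundary piece (supported where $x < c$). On the compact interior region, weights are bounded, scattering and b-vector fields agree up to bounded factors, and so the claim reduces to the standard Sobolev algebra, which holds because $s + \kappa + k \geq 1 + (n-1)/2 > n/2$.

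For the boundary piece, in adapted coordinates $(x, y)$, introduce the logarithmic variable $t = -\log x$, so that the b-vector fields $xD_x, D_{y_j}$ become constant-coefficient derivatives $-D_t, D_{y_j}$, while scattering vector fields $x^2\partial_x, x\partial_{y_j}$ acquire a bounded factor $e^{-t}$. A direct computation of the scattering volume form shows that, for $u$ supported in the collar,
\[
u \in H^{0,\sw}(M^\circ) \iff v := x^{-\sw - n/2}u \in L^2(dt\,dy).
\]
After commuting $x^{-\sw_i - n/2}$ through the b- and scattering derivatives (which only shifts $D_t$ by $i(\sw_i + n/2)$, a lower-order term), the substitution $v_i := x^{-\sw_i - n/2} u_i$ identifies $u_i \in H^{s,\sw_i;\kappa, k}_{\mathcal{M}_0}$ with $v_i \in \cY^{s+\kappa, k}_1(\RR_t \times \partial M)$ in the notation of \eqref{eq:basicmodulereg}: the total regularity $s+\kappa$ in $(t,y)$ absorbs the $s$ scattering derivatives and the $\kappa$ b-derivatives, while the additional angular regularity $k$ comes from the generators of $\mathcal{N}$.

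The key algebraic identity
\[
x^{-\sw - n/2}(u_1 u_2) = v_1 v_2 \quad\text{when}\quad \sw = \sw_1 + \sw_2 + n/2
\]
then reduces the claim to showing $v_1 v_2 \in \cY^{s+\kappa, k}_1$, which is exactly Lemma~\ref{thm:basicalgebra} applied with $d = 1$: the hypotheses $s + \kappa \geq 1 > 1/2 = d/2$ and $k \geq (n-1)/2 = (n-d)/2$ are precisely those present. The main technical obstacle is the careful bookkeeping of commutators between the weight $x^{-\sw-n/2}$, the scattering vector fields, and the b-vector fields, together with the reduction of scattering regularity to b-regularity using the bounded factor $e^{-t}$; conceptually, however, the essential content---the weight gain of $n/2$---is purely a volume-form computation already visible at the level $s = \kappa = k = 0$.
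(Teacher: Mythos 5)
The statement you were asked to prove is the \emph{model} lemma on $\RR^d_z \times \boldsymbol N$: the space $\cY^{\kappa,k}_d$ defined purely by the Fourier-multiplier condition \eqref{eq:basicmodulereg} (no spatial weights, no scattering structure) is closed under multiplication when $\kappa > d/2$ and $k \geq (n-d)/2$. Your proposal does not prove this. Instead, it proves the weighted, manifold-level multiplication property of Lemma~\ref{thm:algebra} --- localization to a collar, the logarithmic substitution $t = -\log x$, the volume-form computation producing the $n/2$ gain --- and at the decisive step it \emph{invokes} Lemma~\ref{thm:basicalgebra} with $d=1$ as a black box (``which is exactly Lemma~\ref{thm:basicalgebra} applied with $d=1$''). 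As an argument for the statement in question this is circular: the entire analytic content of the target lemma is assumed, not established. (Your reduction is in fact essentially the paper's own proof of Lemma~\ref{thm:algebra}, but that is a different statement; the paper handles Lemma~\ref{thm:basicalgebra} itself by citation to \cite[Lemma~4.4]{HVsemi}.)

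What is actually needed is an argument on the Fourier side. After covering $\boldsymbol N$ by charts to reduce to $\RR^d \times \RR^{n-d}$, one sets $m(\zeta) = \la \zeta \ra^{\kappa} \la \zeta'' \ra^{k}$ and shows (i) the Peetre-type submultiplicativity $m(\zeta) \lesssim m(\zeta - \eta)\,m(\eta)$ up to splitting into the regions $|\eta| \lesssim |\zeta - \eta|$ and its complement, and (ii) that $m^{-1} \in L^2(\RR^n)$, which is exactly where the hypotheses enter: integrating $\la \zeta \ra^{-2\kappa}$ in $\zeta'$ first requires $\kappa > d/2$ and yields $\la \zeta'' \ra^{d - 2\kappa}$, and then integrability in $\zeta''$ requires $\kappa + k > n/2$, which follows from the two hypotheses. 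Combining (i) and (ii) with $\widehat{uv} = (2\pi)^{-n}\, \hat u * \hat v$ and Cauchy--Schwarz gives $\| m\, \widehat{uv}\|_{L^2} \lesssim \|m \hat u\|_{L^2} \|m \hat v\|_{L^2}$. None of this appears in your write-up, so the proof of the stated lemma is missing in its entirety.
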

%
  

\begin{proof}[Proof of Lemma \ref{thm:algebra}] 
First, we suppose $\sw = \sw' = s = 0$. Let $u, v \in C^\infty_c(M^\circ)$, and let $\chi = \chi(x) \in C^\infty(M)$ be a cutoff function, identically one near the boundary, and supported in the collar neighbourhood $\{x < c\}.$ We decompose the product $uv$ as 
\begin{equation*}
uv  = (1 - \chi^2) uv + \chi^2 uv,
\end{equation*} 
 and bound the $H_{\mathcal{M}_0}^{0,n/2;\kappa,\smo}$ norm of both pieces. 
 
 Since $\kappa + k > n/2$ and $1 - \chi^2$ has compact support, the standard algebra property for $H^{\kappa + \smo}(M^\circ)$, along with the equivalence of the $H^{\kappa + \smo}$ and $H_{\mathcal{M}_0}^{0,\tilde{\sw};\kappa,\smo}$ norms (any $\tilde{\sw} \in \R$) on a fixed compact subset of $M^\circ$, yields
 \begin{equation*}
 \|(1-\chi^2)uv\|_{H_{\mathcal{M}_0}^{0,n/2;\kappa,\smo}} \le C  \|u\|_{H_{\mathcal{M}_0}^{0,0;\kappa,\smo}} \|v\|_{H_{\mathcal{M}_0}^{0,0;\kappa,\smo}}.
 \end{equation*} 

On the other hand, $u_1 \defeq \chi u, v_1 \defeq \chi v$ are supported in a collar neighbourhood of the boundary of $\partial M$, so can be viewed as belonging to $C_c^\infty([0, c) \times \partial M)$. The functions 
 $$
\tilde{u}(t,y) = u_1(e^{t}, y), \quad \tilde{v}(t,y) = v_1(e^{t}, y),
  $$
  are then defined for $(t, y ) \in \R \times \partial M$. Taking into account that $xD_x = D_t$ if $x = e^t$, we see that
\begin{equation} \label{chain rule L 2 equiv}
    \|x^{n/2}(xD_x)^jD^\beta_y w \|_{L^2([0, c)_x \times \partial M ;\, x^{-1-n}dx d\mu)} = \|D_t^j D^\beta_y \tilde{w} \|_{L^2( \R_t \times \partial M ; dt d\mu)},
\end{equation}
for all $w \in C^\infty_c([0, c) \times \partial M)$.
Using Lemma~\ref{thm:basicalgebra} and \eqref{chain rule L 2 equiv}, a short calculation shows
\begin{equation*}
\|u_1 v_1 \|_{H_{\mathcal{M}_0}^{0,n/2;\kappa,\smo}} \le C \|u_1 \|_{H_{\mathcal{M}_0}^{0,0;\kappa,\smo}} \| v_1 \|_{H_{\mathcal{M}_0}^{0,0;\kappa,\smo}},
\end{equation*}
completing the proof of Lemma \ref{thm:algebra} for the case $\sw = \sw' = s = 0$. 

One then proves the general case of $\sw, \, \sw' \in \RR$, $s \in \mathbb{N}$ using $H_{\mathcal{M}_0}^{0,\tilde{\sw};\kappa,\smo} = x^{\tilde{\sw}} H_{\mathcal{M}_0}^{0, 0;\kappa,\smo}$ (any $\tilde{\sw} \in \RR$) and the Leibniz rule. \end{proof}

The following corollary follows from Lemma~\ref{thm:algebra} and  Lemma~\ref{lem:mu-change}:

\begin{cor}\label{cor:prod} Let $s \in \mathbb{N}$ and let $\mmuu_1, \mmuu_2, \dots, \mmuu_{p+1}$ be real parameters. Then provided $\kappa \ge 1$ and $k \geq (n-1)/2$, pointwise multiplication of functions in $C^\infty_c(M)$ induces a bounded multilinear map 
\begin{equation}
 H^{s,\sw_1;\kappa, \smo}_{\mathcal{M}_{\mmuu_1}} \cdot  H^{s,\sw_2;\kappa, \smo}_{\mathcal{M}_{\mmuu_2}} \cdots H^{s,\sw_p;\kappa, \smo}_{\mathcal{M}_{\mmuu_p}} \lra
    H^{s,\sw;\kappa, \smo}_{\mathcal{M}_{\mmuu_{p+1}}}
\end{equation}
   where
\begin{equation}\label{exp}
\sw = \sw_1 + \sw_2 + \dots + \sw_p + \frac{(p-1)n}{2} - \kappa.
\end{equation} 
\end{cor}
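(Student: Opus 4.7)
The strategy is to reduce the multilinear bound to Lemma \ref{thm:algebra}, which provides the algebra property only for the module $\mathcal{M}_0$. The plan is to use Lemma \ref{lem:mu-change} to transport each factor $u_j$ into an $\mathcal{M}_0$-Sobolev space, apply the algebra property there, and then transport the resulting product back to the module $\mathcal{M}_{\mmuu_{p+1}}$, carefully tracking the weight loss the final transport incurs.

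By Lemma \ref{lem:mu-change}, each $u_j \in H^{s,\sw_j;\kappa,\smo}_{\mathcal{M}_{\mmuu_j}}$ may be written as $u_j = e^{i \mmuu_j r} \tilde u_j$ with $\tilde u_j \in H^{s,\sw_j;\kappa,\smo}_{\mathcal{M}_0}$ and with comparable norms. Iterating Lemma \ref{thm:algebra} $p-1$ times then gives
\begin{equation*}
\tilde u_1 \tilde u_2 \cdots \tilde u_p \in H^{s,\sigma;\kappa,\smo}_{\mathcal{M}_0}, \quad \sigma := \sw_1 + \cdots + \sw_p + (p-1)n/2,
\end{equation*}
with the norm of the product controlled by the product of the norms of the $\tilde u_j$. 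One further application of Lemma \ref{lem:mu-change} (with $\mmuu = \mmuu_{p+1}$ and $\mmuu' = 0$) shows that membership $u_1 \cdots u_p \in H^{s,\sw;\kappa,\smo}_{\mathcal{M}_{\mmuu_{p+1}}}$ is equivalent to $e^{i \alpha r}(\tilde u_1 \cdots \tilde u_p) \in H^{s,\sw;\kappa,\smo}_{\mathcal{M}_0}$, where $\alpha := \mmuu_1 + \cdots + \mmuu_p - \mmuu_{p+1}$. Thus the entire task reduces to showing that multiplication by $e^{i\alpha r}$ is bounded from $H^{s,\sigma;\kappa,\smo}_{\mathcal{M}_0}$ into $H^{s,\sigma - \kappa;\kappa,\smo}_{\mathcal{M}_0}$.

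This last step is the substantive one, and is the only place where $\kappa$ enters the weight formula \eqref{exp}. To prove it I would use the conjugation identity
\begin{equation*}
(xD_x)^j D_y^\beta(e^{i \alpha r} v) = e^{i \alpha r}\bigl(xD_x - \alpha/x\bigr)^j D_y^\beta v,
\end{equation*}
which follows from $xD_x(e^{i\alpha r}) = -(\alpha/x)\,e^{i \alpha r}$ together with the fact that $e^{i\alpha r} = e^{i\alpha/x}$ depends only on $x$ and so commutes with $D_y$ and with any generator of $\mathcal{M}_0$ supported in the interior. Expanding $(xD_x - \alpha/x)^j$ and using the two mapping facts $xD_x \colon H^{s,t;\kappa',\smo}_{\mathcal{M}_0} \to H^{s,t;\kappa'-1,\smo}_{\mathcal{M}_0}$ (preserving weight, dropping one order of module regularity) and $x^{-1} \colon H^{s,t;\kappa',\smo}_{\mathcal{M}_0} \to H^{s,t-1;\kappa',\smo}_{\mathcal{M}_0}$ (dropping weight by one, preserving module regularity), the worst term is the one containing $j$ copies of $x^{-1}$ and lies in $H^{s,\sigma - j}$ with norm bounded by the norm of $v$ in $H^{s,\sigma;\kappa,\smo}_{\mathcal{M}_0}$. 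Over $j + |\beta| \leq \kappa$ the worst case is $j = \kappa$, giving spatial weight $\sigma - \kappa = \sw$ as in \eqref{exp}. The main technical step is thus this final module-change estimate, which quantifies the cost — exactly one spatial order per module derivative — of a mismatched radial oscillation.
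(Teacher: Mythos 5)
Your proof is correct and follows essentially the same route as the paper: reduce to the $\mathcal{M}_0$ case via Lemma~\ref{lem:mu-change}, apply Lemma~\ref{thm:algebra} iteratively, and transport back, with the $\kappa$-order spatial loss coming from the commutator of the radial module generator with the residual exponential. The paper's justification of that last step is a single sentence (``we incur a factor of $r$ each time the operator $r(D_r - \mmuu)$ is applied to the exponential''), whereas you spell out the conjugation identity $(xD_x)^j(e^{i\alpha r}v)=e^{i\alpha r}(xD_x-\alpha/x)^j v$ and the bookkeeping of which terms lose weight versus module regularity; this is a more explicit version of the same computation, not a different argument.
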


\begin{proof}
When all $\mmuu_j = 0$, the result follows from applying
Lemma~\ref{thm:algebra} $p-1$ times, and indeed this gives a better
result without the loss of $k$ on the right side of \eqref{exp}. In general, we use 
Lemma \ref{lem:mu-change} to write an element of $H^{s, \sw_j; \kappa, \smo}_{\mathcal{M}_{\mmuu_j}}$ as
$e^{i\mmuu_j r}$ times an element $w_j$ of
$H^{s,\sw_j; \kappa, \smo}_{\mathcal{M}_{0}}$. The product is then the product
of the $w_j$, which lies in $H^{s,\sw+\kappa; \kappa, \smo}_{\mathcal{M}_{0}}$,
times the exponential factor $e^{i \mmuu' r}$ where $\mmuu' =  \mmuu_1
+ \dots + \mmuu_p$, which is in turn an element of $H^{s,\sw+\kappa;\kappa,
  \smo}_{\mathcal{M}_{\mmuu_{p+1}}}$ times the exponential $e^{i
  (\mmuu' - \mmuu_{p+1}) r}$. Finally,
multiplication by an exponential in $r$ leads to a loss of $\kappa$ in the
spatial weight since we incur a factor of $r$ each time the operator
$r (D_r - \mmuu)$ is applied to the exponential factor.
\end{proof} 

\begin{rem}
It is because of this loss of $\kappa$ in the spatial weight that we work below with spaces where $\kappa$ is as small as possible, namely $\kappa=1$. 
\end{rem}




\section{Proof of Theorem~\ref{thm:Fred prop}}\label{sec:Fredholm}

The organization of this section is as follows. We first prove invertibility of $P$ acting between variable order spaces, as in \eqref{eq:first-invertible}. We then use this to prove invertibility on the spaces with extra module regularity, as in Theorem~\ref{thm:Fred prop}. The proof of invertibility is achieved by first proving that the map in question is Fredholm, and then establishing the triviality of the kernel and cokernel. The Fredholm property is established by patching together microlocal estimates of various sorts. In the elliptic region, we use a very standard elliptic estimate; on the characteristic variety, we use a standard positive commutator estimate where the Hamilton vector field is nonvanishing, and radial-point estimates originating with Melrose at the radial sets, where the Hamilton vector field vanishes.

\begin{thm}[{{\cite[Prop. 5.28]{va18}}}]\label{variable order
    Fredholm est}
Let $\sfr_\pm \in S^{0,0}_{\emph{cl}}(\Tsc^* M)$ (that is, let them be
classical scattering symbols of order $(0,0)$ on $M$, see Section \ref{sec:scat calc}) satisfy conditions \eqref{weight-radial}, \eqref{weight-monotone} and \eqref{weight-pm}.  Let $s \in \mathbb{R}$.  Then the map
\eqref{eq:first-invertible} is invertible.
      \end{thm}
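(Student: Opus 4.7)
My plan is to prove invertibility of \eqref{eq:first-invertible} by first establishing the Fredholm property via microlocal estimates, and then showing separately that the kernel and cokernel are trivial. I will focus on the $+$ sign (outgoing resolvent), since the $-$ case is analogous by swapping the roles of $\mathcal{R}_\pm$, and indeed is dual to the $+$ case.

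The first step is to establish the twin Fredholm estimates
\begin{equation*}
\|u\|_{H^{s,\sfr_+}} \le C\bigl(\|Pu\|_{H^{s-2,\sfr_++1}} + \|u\|_{H^{-N,-N}}\bigr), \quad
\|v\|_{H^{-s+2,-\sfr_+-1}} \le C\bigl(\|P^*v\|_{H^{-s,-\sfr_+}} + \|v\|_{H^{-N,-N}}\bigr),
\end{equation*}
for any sufficiently large $N$. These are obtained by patching together microlocal estimates via a partition of unity on $\partial(\scTstarc M)$. In the elliptic region $\Ell(P)$, a standard elliptic parametrix gives microlocal control with a gain of $(2,1)$ orders. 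Away from $\Sigma(P)$ at spatial infinity, the Hamilton vector field $\mathsf{H}_p$ is nonvanishing and one uses a standard positive commutator argument along bicharacteristics, with commutant chosen so that $\mathsf{H}_p(\sfr_+) \le 0$, which is exactly our monotonicity hypothesis \eqref{weight-monotone}; this propagates regularity along the flow from near $\mathcal{R}_-$ toward $\mathcal{R}_+$. At the two radial sets, where $\mathsf{H}_p$ vanishes but the weight $x\p_x$-component of $\mathsf{H}_p$ is $\mp 2\lambda \neq 0$, one applies Melrose's radial point estimates: near the sink $\mathcal{R}_+$, the weight $\sfr_+ = -1/2-\delta$ is below threshold $-1/2$, so one obtains microlocal $H^{s,\sfr_+}$ regularity with no a priori assumption of regularity at $\mathcal{R}_+$; near the source $\mathcal{R}_-$, the weight $\sfr_+ = -1/2 + \delta$ is above threshold, requiring only a weak background regularity assumption on $u$ (which is automatic from $u \in H^{s,\sfr_+}$). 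The dual estimate for $P^*$ works with the dual weights $-\sfr_+-1 = \sfr_- - \cdot$, which satisfy the opposite monotonicity and have the opposite above/below threshold behavior, so the same schema applies with the roles of source and sink swapped.

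Given the two estimates above, the compact inclusion $H^{s,\sfr_+} \hookrightarrow H^{-N,-N}$ upgrades them to the Fredholm property: $\ker P \subset \mathcal{X}^{s,\sfr_+}$ is finite-dimensional, the range is closed, and $\mathrm{coker}\,P$ is isomorphic to $\ker P^* \subset H^{-s+2,-\sfr_+-1}$, which is also finite-dimensional. Moreover a standard bootstrap shows both kernels are independent of $s$.

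The hardest step is proving triviality of the kernel. Let $u \in \mathcal{X}^{s,\sfr_+}$ with $Pu = 0$. Because $\sfr_+ = -1/2-\delta$ near $\mathcal{R}_+$ but $\sfr_+ = -1/2+\delta$ near $\mathcal{R}_-$, the solution is microlocally purely outgoing: by propagation estimates starting at $\mathcal{R}_-$ with the above-threshold weight, one sees that $u$ has no wavefront set at $\mathcal{R}_-$, so $u$ is a purely outgoing solution of $Pu=0$ in the sense of Melrose. The standard Sommerfeld/Rellich-type uniqueness theorem for asymptotically conic Helmholtz operators (absence of embedded $L^2$ eigenvalues for $H$ at $\lambda^2 > 0$, combined with the radiation condition) then forces $u=0$. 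The cokernel is handled identically: $\ker P^*$ consists of incoming solutions of $(P^* - 0) v = 0$, and the same uniqueness argument applied in the dual setting (or by noting that $P = P^*$ and the dual weights correspond to the $-$ problem) gives $v=0$. Combined with the Fredholm property, this yields invertibility of \eqref{eq:first-invertible}.
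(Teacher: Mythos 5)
Your approach is essentially the same as the paper's: microlocal (elliptic, real principal type, and Melrose radial-point) estimates patched to a global Fredholm estimate; compactness of the inclusion to finite rank kernel and cokernel; and then triviality of the nullspace via the outgoing/purely-radiating structure of $\ker P$. The paper phrases the cokernel estimate as the same estimate for the weight $\sfr_-$ rather than as a dual estimate for $P^*$, but since $P$ is formally self-adjoint and $-\sfr_+ - 1 = \sfr_-$ by \eqref{weight-pm}, these are the same thing.

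The one place where you leave a genuine gap is the final uniqueness step. You write that the ``standard Sommerfeld/Rellich-type uniqueness theorem for asymptotically conic Helmholtz operators\ldots{} forces $u=0$.'' On $\RR^n$ this is indeed classical, but in the general asymptotically conic setting (nontrivial metric $h(x,y,dy)$, nontrivial potential $V\in x^2 C^\infty(M)$, possibly nontrivial topology of $\partial M$), this is not a theorem one can cite off the shelf in the form you want. The paper supplies the argument: after propagation gives $\WF(u)\subset \mathcal{R}_+$, one invokes Melrose's result \cite[Prop.\ 12]{RBMSpec} that such a solution has a full outgoing asymptotic expansion $u \sim r^{-(n-1)/2}e^{i\lambda r}\sum_j r^{-j} v_j$ with $v_j\in C^\infty(\partial M)$; then the boundary pairing formula \cite[Prop.\ 13]{RBMSpec}, namely $-2i\lambda \int_{\partial M}|v_0|^2 = 2\Re\int_M u\,\overline{Pu}=0$, forces $v_0\equiv 0$; this pushes $u$ above threshold at $\mathcal{R}_+$ as well, so a second application of the above-threshold radial point estimate plus propagation gives $u\in\dot C^\infty(M)$ (Schwartz); and only then does a unique continuation theorem (\cite[Thm.\ 17.2.8]{Hor} or \cite{FHOO}) give $u\equiv 0$. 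Without the boundary-pairing step, ``absence of embedded eigenvalues'' is not directly applicable because $u\in H^{s,\sfr_+}$ is not a priori $L^2$ near $\mathcal{R}_+$ (where $\sfr_+ < -1/2$). You should either prove this chain of implications or at least cite the specific results from \cite{RBMSpec} that it rests on; a generic appeal to ``standard Rellich uniqueness'' does not account for the outgoing non-$L^2$ regime you are actually in.
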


As just mentioned, the strategy is first to prove that the map \eqref{eq:first-invertible} is Fredholm. We will prove

\begin{lemma}\label{lem:variable order Fredholm}
  Suppose $\sfr_\pm \in S^{0,0}( \Tsc^* M)$ satisfying the conditions in Theorem~\ref{variable order
    Fredholm est}, let $s \in \RR$ be arbitrary, and let $M,N$ be such that $M < \min\{s, 2- s\}$ and $N < \min \{\sfr_+, \sfr_-\} = 1/2 - \delta$. Then there is a $C>0$ so that for
  all $u \in \mathcal{X}^{s, \sfr_\pm} $,
  \begin{equation}\label{eq: var ord Fredholm est}
    \norm[s,\sfr_\pm]{  u } \le C \left( \norm[s-2, \sfr_\pm + 1]{Pu} + \norm[M,N]{u} \right).
  \end{equation}
  Moreover, estimates \eqref{eq: var ord Fredholm est} for both signs $\pm$ implies that the 
map \eqref{eq:first-invertible} (for either sign) is Fredholm.
\end{lemma}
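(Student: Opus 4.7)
The plan is to establish the estimate \eqref{eq: var ord Fredholm est} by patching together microlocal estimates on a cover of the compactified phase space $\scTstarc M$, and then deduce the Fredholm property via a standard functional analytic argument using the compact embedding $H^{s,\sfr_\pm} \hookrightarrow H^{M,N}$ and a dual estimate.

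For the estimate, I would pick a partition of unity on $\partial(\scTstarc M)$ subordinate to an open cover tailored to the microlocal structure of $P$. Concretely, I would microlocalize into three types of regions via operators $Q_j$ in $\Psi_{\text{sc}}^{0,0}(M^\circ)$ with $\sum Q_j^* Q_j = \Id$ modulo lower-order terms: (i) the elliptic set of $P$, which consists of all of fibre-infinity together with the region where $\nu^2 + |\mu|^2_y \neq \lambda^2$ at spatial infinity; (ii) small conic neighborhoods of the two radial sets $\mathcal{R}_\pm$; and (iii) the rest of $\Sigma(P)$, away from $\mathcal{R}_\pm$, where $\mathsf{H}_p \neq 0$. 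On region (i) we apply the standard elliptic estimate for the scattering calculus, which gives $\|Q_j u\|_{s,\sfr_\pm} \lesssim \|Pu\|_{s-2,\sfr_\pm+1} + \|u\|_{M,N}$ with no loss. On region (iii), we apply Melrose's real principal type propagation estimate; because $\sfr_+$ is nonincreasing along the forward Hamilton flow within $\Sigma(P)$, the Poisson bracket computation yields a positive commutator of $P$ with an appropriate multiple of $\Op(x^{-2\sfr_+ -1} \ang{\zeta}^{2s-1})$, propagating control in the direction of the flow, from $\mathcal{R}_-$ toward $\mathcal{R}_+$ (and the opposite direction for $\sfr_-$).

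The subtle step is region (ii), the radial set estimates. At the source $\mathcal{R}_-$ for the $+$ sign, the weight is $-1/2 + \delta$, which is \emph{above} the radial point threshold $-1/2$; the high-regularity radial point estimate then gives control near $\mathcal{R}_-$ without assuming any a priori control from a punctured neighborhood. At the sink $\mathcal{R}_+$, the weight is $-1/2 - \delta$, which is \emph{below} threshold; the low-regularity radial point estimate propagates control \emph{into} $\mathcal{R}_+$ from a punctured neighborhood, which is precisely the control already established by the propagation estimate coming from $\mathcal{R}_-$. Combining the three types of microlocal estimates via the partition of unity and absorbing the commutator/remainder terms into the error $\|u\|_{M,N}$ (using that $M < s$ and $N < -1/2 -\delta$), one obtains \eqref{eq: var ord Fredholm est}. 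The $-$ case is symmetric, with the roles of source and sink reversed.

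For the Fredholm conclusion, note first that the embedding $H^{s, \sfr_\pm} \hookrightarrow H^{M, N}$ is compact under the assumptions $M < s$ and $N < -1/2 - \delta$. A standard abstract argument (e.g.\ Peetre's lemma) then shows that the estimate \eqref{eq: var ord Fredholm est} for the $+$ sign implies that $P \colon \mathcal{X}^{s,\sfr_+} \to \mathcal{Y}^{s-2,\sfr_++1}$ has closed range and finite-dimensional kernel. To handle the cokernel, I would identify the cokernel with the kernel of the adjoint. Since $P$ is formally self-adjoint and the $L^2$ pairing identifies the dual of $H^{s-2,\sfr_++1}$ with $H^{2-s, -\sfr_+-1} = H^{2-s,\sfr_-}$ (using the relation $\sfr_- = -1 - \sfr_+$), the cokernel of the $+$ map is controlled by distributional solutions $v \in H^{2-s,\sfr_-}$ of $Pv = 0$. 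Applying the $-$ version of \eqref{eq: var ord Fredholm est} (with regularity parameters $2-s$, $\sfr_-$) to such $v$ gives $\|v\|_{2-s,\sfr_-} \le C \|v\|_{M,N}$, and the compactness of the embedding now shows that the space of such $v$ is finite-dimensional. The main obstacle in this program is the careful bookkeeping at the radial points, since the $\boldsymbol{\delta}$-loss in the variable order symbol calculus must be compatible with the sign of $\sfr_+ + 1/2$ in the principal symbol of the commutant, so that the leading term in the positive commutator dominates the remainder; I expect this to require following the radial point estimates from \cite{va18} essentially verbatim, adapted to the present variable-order setting.
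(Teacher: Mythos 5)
Your overall architecture matches the paper's: decompose phase space into the elliptic region, a neighborhood of each radial set, and the rest of $\Sigma(P)$; invoke elliptic, real principal type propagation, and radial point estimates; chain the a priori inputs from $\mathcal{R}_-$ through the propagation region into $\mathcal{R}_+$; and deduce the Fredholm property via the compact embedding $H^{s,\sfr_\pm}\hookrightarrow H^{M,N}$ and self-adjointness to control the cokernel by the kernel for the opposite sign. All of that is correct.

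There is, however, one genuine gap in your treatment of the above-threshold radial point estimate at $\mathcal{R}_-$. You assert that, after combining the estimates, all the remainder terms can be absorbed into $\|u\|_{M,N}$ using $M<s$ and $N<-1/2-\delta$. But the above-threshold estimate (Proposition \ref{radial point prop}(ii)) is \emph{not} free of a priori input at the radial set: it requires $G_4 u \in H^{s',\sw'}$ for some $\sw' > -1/2$, and the resulting term $\|G_4 u\|_{s',\sw'}$ appears on the right-hand side. Since $N < -1/2-\delta < -1/2 < \sw'$, this term is strictly stronger than $\|u\|_{M,N}$ and cannot be absorbed into it directly. The paper addresses exactly this point: after assembling the microlocal estimates one first obtains the weaker bound \eqref{eq:real-final-fred-est} containing $\|G_4 u\|_{s',\sw'}$, and then removes that term by Sobolev interpolation between $H^{s, -1/2+\delta}$ (which coincides microlocally with $H^{s,\sfr_+}$ on $\WF'(G_4)$) and $H^{M,N}$, followed by Young's inequality to split off an $\epsilon\|u\|_{s,\sfr_+}$ piece that is absorbed into the left-hand side. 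Your proposal needs this interpolation step to reach the clean form \eqref{eq: var ord Fredholm est}; without it the argument stalls at the intermediate estimate. (This is flagged explicitly in the remark following the lemma in the paper.)

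Two smaller remarks. First, the paper uses a linear partition $Q_1+Q_2+Q_3+Q_4 = \Id$ and adds the four estimates with carefully chosen weights $1, K, 1, K^2$ so that the a priori terms $\|Q_2'u\|$ and $\|Q_3'u\|$ can be dominated by the corresponding main terms; your $\sum Q_j^*Q_j=\Id$ phrasing is fine but you should be explicit that $Q_2'$ is microsupported where $Q_4$ is elliptic and $Q_3'$ where $Q_2$ is elliptic, so the propagation chain from source to sink closes. Second, in the cokernel argument you should note, as the paper does, that $(H^{s-2,\sfr_++1})^* = H^{2-s,-1-\sfr_+} = H^{2-s,\sfr_-}$ precisely because of the chosen normalization $\sfr_- = -1-\sfr_+$; this is what makes the $-$ estimate give finite-dimensionality of the cokernel for the $+$ problem.
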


\begin{rem}
  The estimate in Lemma \ref{lem:variable order Fredholm} looks slightly
different than similar estimates one finds in the literature obtained
using radial points
estimates, e.g.\ that in \cite[Eqn. 5.2]{HVsemi}, as it does not
reflect the fact that near the higher decay region, e.g.\ near
$\mathcal{R}_-$ in the $\sfr_+$ case, one obtains estimates where the
right hand side has a term with above-threshold decay, see \eqref{eq: high-reg}
below.  This is often included in the global estimate, which can be
written
$$
  \norm[s,\sfr_+]{  u } \le C \left( \norm[s-2, \sfr_+ + 1]{Pu} +
    \norm[M,N]{u} + 
    \norm[M,\sfr']{Gu} \right) , \quad -1/2 < \sfr' < \sfr_+
  $$
and $\WF'(G)$ contained in a neighborhood of $\mathcal{R}_-$ on which
$\sfr_+$ is constant.  (See \eqref{eq:real-final-fred-est} below.) One can deduce the estimate in Lemma
\ref{lem:variable order Fredholm} from this estimate by an interpolation argument (see
  below) and we prefer \eqref{eq: var ord Fredholm est} for its simplicity.
\end{rem}

To motivate the microlocal estimates below, we briefly explain why the estimates \eqref{eq: var ord Fredholm est} imply
the Fredholm statement.  For definiteness, we consider only the $+$ sign, so we are considering
\begin{equation}
    P \colon \mathcal{X}^{s,\sfr_+} \lra
    \mathcal{Y}^{s-2,\sfr_++1}.\label{eq:var-invert-out}
      \end{equation}
The estimate in \eqref{eq: var ord Fredholm est} implies that the
kernel \eqref{eq:var-invert-out} is finite dimensional by a standard
argument. Indeed, on the
kernel of $P$ we have $\norm[s,\sfr_+]{  u } \le C \norm[M,N]{u}$, and
the containment $H^{s,\sfr_+} \subset H^{M,N}$ is compact, proving that the kernel must
be finite dimensional. 

To show that the range is closed, consider a 
sequence $u_j$ in $\mathcal{X}^{s, \sfr_+}$ in the subspace orthogonal to the kernel of $P$, for which $Pu_j$
converges to some $f \in    H_{+}^{s-2,\sfr_++1}$. Then we apply \eqref{eq: var ord Fredholm est} to $u_j$, and observe that the $\norm[M,N]{u_j}$ norms must be uniformly bounded; for if not, then one can pass to a subsequence where $\norm[M,N]{u_j}$ tends to infinity,  rescale the $u_j$ to $\hat u_j$ that  $\norm[M,N]{\hat u_j}$ is fixed to be $1$, and show, using the compact embedding $H^{s,\sfr_+} \subset H^{M,N}$ again, that a subsequence of the $\hat u_j$ converge to a limit $v$ such that $Pv = 0$. This implies that $v = 0$ since the $u_j$ were chosen orthogonal to the kernel of $P$, and this is a contradiction since it would imply convergence of the subsequence to zero also in the weaker norm $H^{M,N}$, where the norm was fixed to be $1$. 

Having thus observed that the $\norm[M,N]{u_j}$ quantities are uniformly bounded, it follows from \eqref{eq: var ord Fredholm est} that the $\norm[s,\sfr_+]{u_j}$ quantities are uniformly bounded. Using the compactness of the  inclusion $H^{s,\sfr_+} \subset H^{M,N}$ once again  we can extract a subsequence convergent in the $H^{M,N}$ norm, and applying \eqref{eq: var ord Fredholm est} we obtain convergence in the $H^{s,\sfr_+}$ norm (since the $Pu_j$ are converging in $H_{+}^{s-2,\sfr_++1}$). 
Thus we obtain a limit $u$ for this subsequence, hence $Pu_j$ converges to $Pu$. So $f = Pu$ is in the range, proving closedness of the range. 

The last step is finite dimensionality of the cokernel. The cokernel can be identified with those $v \in
(H^{s-2,\sfr_++1})^* = H^{2 - s, - 1 -\sfr_+}$ with $P v = 0$, using the formal self-adjointness of the operator $P$. Now recall from \eqref{weight-pm} that $-1 - \sfr_+$ is precisely $\sfr_-$. So it suffices to prove that the kernel of $P$ acting on $H^{2 - s, \sfr_-}$ is finite dimensional. But this follows from the estimate \eqref{eq: var ord Fredholm est} for the opposite sign $-$, exactly as above. 
This completes the proof that the estimate \eqref{eq: var ord Fredholm est}, for both signs, implies the Fredholm property of the map \eqref{eq:first-invertible} for the $+$ sign (and the argument for the $-$ works in an exactly similar manner).

\subsection{Microlocal estimates}
In this section we review the specific microlocal estimates that we shall use to prove the estimate \eqref{eq: var ord Fredholm est}, which we shall refer to as the `Fredholm estimate'. 

The first type of estimate is an elliptic estimate, that applies on the elliptic set of $P$. This is very familiar from the theory of elliptic pseudodifferential operators. The only novelty is that it applies here to the full elliptic set in the sense of the scattering calculus, thus, everywhere on the boundary of $\scTstarc M$ away from $\Sigma(P)$. 

 \begin{prop}[microlocal elliptic regularity {{\cite[Cor.\ 5.5]{va18}}} ] \label{elliptic reg}
 Let $u \in \mathcal{S}'$ and let $Q_1, G_1 \in \Psi_{\emph{sc}}^{0,0}$ be such that
 $\WF'(Q_1) \subset \Ell(G_1) \cap \Ell(P)$. Assume $G_1Pu \in H^{s-2,
   \mathsf{l}}$. Then $Q_1u \in H^{s, \sfr}$, and for all $M,N \in \R$,
 there is a constant $C > 0$ such that if $u \in H^{M,N}$, then
 \begin{equation} \label{eq: elliptic reg}
 \| Q_1u \|_{s, \mathsf{l}} \le C \left( \| G_1Pu\|_{s-2, \mathsf{l}} + \| u\|_{M,N} \right).
 \end{equation} 
 \end{prop}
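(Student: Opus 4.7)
The plan is to prove this as a standard microlocal elliptic regularity result in the scattering calculus, the novelty being that ellipticity is measured in both fibre and base senses. The basic idea is to construct a microlocal parametrix $B$ for $P$ whose microsupport is contained in the elliptic region $\Ell(G_1) \cap \Ell(P)$, use $B$ to express $Q_1 u$ in terms of $G_1 P u$ plus residual errors, and then apply the standard mapping properties of scattering pseudodifferential operators on the variable order Sobolev spaces $H^{s,\sfr}$ that were recalled in Section~\ref{Sobolev spaces of variable order}.

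First I would construct $B \in \Psi_{\text{sc}}^{-2, 0}(M^\circ)$ with $\WF'(B) \subset \Ell(G_1) \cap \Ell(P)$ satisfying $BP = Q_1 + E$, where $E \in \Psi_{\text{sc}}^{-\infty,-\infty}(M^\circ)$ is residual. The principal symbol is the explicit choice $b_0 = \chi\, q_1/p$, where $\chi$ is a classical symbol of order $(0,0)$ that is microlocally equal to $1$ on $\WF'(Q_1)$ and microsupported in $\Ell(G_1) \cap \Ell(P)$, so that the quotient $q_1/p$ is well defined and symbolic. The symbol expansion \eqref{symbol-exp} then allows me to iteratively correct $b_0$ by symbols of strictly lower bi-order, producing an asymptotic sum $b \sim b_0 + b_{-1} + \cdots$ whose Borel summation yields $B$ with the required property; the same iteration, applied in the variable order calculus with the $\boldsymbol\delta$-loss version of \eqref{eq:3}, works unchanged since we only need the standard symbolic computations in a fixed bi-order window.

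Next I would build, by the same parametrix construction, an operator $F \in \Psi_{\text{sc}}^{0,0}(M^\circ)$ microlocally inverting $G_1$ on $\WF'(B)$, i.e.\ $FG_1 = I + R$ with $\WF'(R) \cap \WF'(B) = \emptyset$. Composing, I get $B = BFG_1 - BR$, and hence
\begin{equation*}
  Q_1 u \;=\; BPu \;-\; Eu \;=\; (BF)\,G_1 Pu \;-\; (BR)\,Pu \;-\; Eu.
\end{equation*}
Here $BF \in \Psi_{\text{sc}}^{-2,0}$ is a bounded map $H^{s-2,\sfr} \to H^{s,\sfr}$, while both $BR$ and $E$ lie in $\Psi_{\text{sc}}^{-\infty,-\infty}$ and therefore map $H^{M,N}$ to $H^{s,\sfr}$ boundedly for any orders. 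This yields the estimate
\begin{equation*}
  \|Q_1 u\|_{s,\sfr} \;\le\; C\bigl( \|G_1 P u\|_{s-2,\sfr} + \|u\|_{M,N} \bigr).
\end{equation*}

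The main obstacle, and the reason this needs more than a one-line argument, is to justify the a priori containment $Q_1 u \in H^{s,\sfr}$ when $u$ is only assumed to lie in some $H^{M,N}$: the identity $Q_1 u = BPu - Eu$ holds as distributions, but the term $BPu$ requires that we know $Pu \in H^{s-2,\sfr}$ microlocally on $\Ell(G_1)$, and one must be careful that the computation is not circular. I would handle this by a standard regularization: introduce a family $\Lambda_\varepsilon \in \Psi_{\text{sc}}^{0,0}$ with $\Lambda_\varepsilon \to I$ in the strong operator topology on $H^{M,N}$, and such that for each fixed $\varepsilon > 0$, $\Lambda_\varepsilon \in \Psi_{\text{sc}}^{-1,-1}$ (so that $\Lambda_\varepsilon u$ gains one order in each slot). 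Applying the estimate to $\Lambda_\varepsilon u$, commuting $\Lambda_\varepsilon$ past $Q_1$, $G_1$ and $P$ modulo operators of strictly lower bi-order that are uniformly bounded in $\varepsilon$, and passing to the limit by weak-$*$ compactness in $H^{s,\sfr}$ then gives both the estimate and the regularity statement $Q_1 u \in H^{s,\sfr}$.
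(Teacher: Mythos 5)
The paper does not prove this proposition; it is cited directly from \cite[Cor.\ 5.5]{va18}, so there is no proof in the paper to compare against. Your parametrix argument is the standard one and essentially matches the cited reference. The construction of $B$ with $BP - Q_1$ residual, via a cutoff $\chi$ identically equal to $1$ on a \emph{neighbourhood} of $\WF'(Q_1)$ and microsupported in $\Ell(G_1) \cap \Ell(P)$, is correct (one should read ``microlocally equal to $1$ on $\WF'(Q_1)$'' as meaning on a neighbourhood of $\WF'(Q_1)$; that is what makes $(\chi-1)\,q_1$ a residual symbol, since the full symbol of $Q_1$ decays rapidly off any neighbourhood of $\WF'(Q_1)$). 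The reduction $B = BFG_1 - BR$ with $BR$ residual is also standard.

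The final regularization paragraph, however, is a red herring: there is no circularity to resolve. Once you have the operator identity $Q_1 = (BF)G_1 P - (BR)P - E$ with $BF \in \Psi_{\text{sc}}^{-2,0}$ and $BR, E \in \Psi_{\text{sc}}^{-\infty,-\infty}$, the distributional identity
\begin{equation*}
 Q_1 u = (BF)(G_1 Pu) - (BR)(Pu) - Eu
\end{equation*}
already yields both the regularity and the estimate for any $u \in H^{M,N}$ with $G_1 Pu \in H^{s-2,\mathsf{l}}$: the first term lies in $H^{s,\mathsf{l}}$ because $BF$ is bounded $H^{s-2,\mathsf{l}} \to H^{s,\mathsf{l}}$, while the other two are Schwartz, since residual scattering operators map $\mathcal{S}'$ into $\mathcal{S}$. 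Each norm is controlled by the stated right-hand side. Regularization of the sort you describe is genuinely needed in the positive-commutator arguments (Propositions \ref{standard-prop} and \ref{radial point prop}, and the module analogues in Section~\ref{sec:Fredholm}), where the estimate only makes sense when tested on a priori more regular functions; elliptic regularity has no such issue. Moreover, as written your regularizer only gains order $(-1,-1)$ for each fixed $\varepsilon$, so a single application would not in general place $\Lambda_\varepsilon u$ in $H^{s,\mathsf{l}}$; to carry out that plan you would need a family of arbitrarily negative order (e.g.\ symbol $(1 + \varepsilon^2(\ang{z}^2 + \ang{\zeta}^2))^{-K}$ with $K$ large) or an iteration. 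Fortunately none of this is required.
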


Thus, on the elliptic set, $P$, as an operator of order $(2,0)$, acts microlocally  with no loss of derivatives or spatial order; $u$ is in $H^{s, \mathsf{l}}$ microlocally on the elliptic set $\Ell(P)$  if and only if $Pu$ is in 
$H^{s-2, \mathsf{l}}$ microlocally. 

On the characteristic set $\Sigma(P)$, wherever the Hamilton vector
field $H_p$ is nonvanishing, we have propagation of singularities (as
it is conventionally called --- though it is more accurately called
`propagation of regularity'). In the following proposition we
specialize to the variable order $\sfr_+$ defined in
Section \ref{Sobolev spaces of variable order}, although only the condition
\eqref{weight-monotone} is necessary for the following
result. Propagation of singularities goes back to H\"ormander's paper
\cite{Hormander:Existence}, and was first used at
spatial infinity in the scattering calculus by Melrose \cite{RBMSpec},
who viewed it as a microlocal version of the Mourre estimate
\cite{Mourre}.
The version we state is from \cite[Thm.\ 5.4]{va18}.

\begin{prop}[propagation of singularities/regularity estimate] \label{standard-prop}
   Let $u \in \mathcal{S}'$ and let $Q_2, Q_2', G_2 \in \Psi_{\emph{sc}}^{0,0}$. Assume $\WF'(Q_2) \subseteq \Ell(G_2)$. Moreover, assume that 
   \begin{equation} \label{eq: flow prop}
   \begin{gathered}
  \text{for every $\alpha \in \WF'(Q_2) \cap \Sigma(P)$, there is a  point $\alpha' \in \Ell(Q_2')$} \\
\text{and a forward bicharacteristic segment $\gamma$ from $\alpha'$ to $\alpha$ such that $\gamma  \subseteq \Ell(G_2)$.}
   \end{gathered}
   \end{equation}
    If $Q_2'u \in H^{s, \sfr_+}$ and $G_2Pu \in H^{s-2, \sfr_+ + 1}$, then $Q_2u \in H^{s,\sfr_+}$, and for all $M, N$ there is $C > 0$
   such that if $u \in H^{M,N}$, then 
   \begin{equation}
     \label{eq: standard-prop}
     \norm[s,\sfr_+]{ Q_2 u } \le C \left(\norm[s,\sfr_+]{Q_2'u} + \| G_2 P u \|_{s-2, \sfr_+ +
     1} + \norm[M,N]{u}\right).
 \end{equation}
  \end{prop}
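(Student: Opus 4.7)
The plan is to prove this by a positive commutator (H\"ormander-Melrose-Sjöstrand) argument, adapted to the scattering calculus with variable spatial order. Given the forward flow hypothesis \eqref{eq: flow prop}, I would construct a self-adjoint commutant $A$ of order $(2s - 1, 2\sfr_+ + 1)$ that is microsupported inside $\Ell(G_2)$ and along the bicharacteristic arcs joining $\Ell(Q_2')$ to $\WF'(Q_2)$, with principal symbol $a^2$, where $a = \phi \, \chi(p) \, b$. Here $b$ is a real elliptic weight in $S^{s - 1/2, \sfr_+ + 1/2}_{\boldsymbol\delta}$, $\chi \in C_c^\infty(\R)$ localizes to a neighborhood of $\Sigma(P) = \{p = 0\}$, and $\phi \ge 0$ is built by integrating a transport equation along the Hamilton flow so that $H_p \phi \ge c\,\tilde\chi - \psi$, with $\tilde\chi$ elliptic on $\WF'(Q_2)$ and $\psi$ microsupported inside $\Ell(Q_2')$. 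The forward flow hypothesis is precisely what guarantees that $\phi$ can be chosen supported in $\Ell(G_2)$.

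Next, I would compute the commutator symbol
\begin{equation*}
\sigma_{\text{pr}}\bigl(i[P, A]\bigr) = H_p(a^2) = 2 \phi \chi(p)^2 b^2 H_p \phi + 2 \phi^2 \chi(p) \chi'(p) b^2 H_p p + 2 \phi^2 \chi(p)^2 b \, H_p b.
\end{equation*}
The middle term vanishes on $\Sigma(P)$. The first term yields the main positive contribution $\ge c\, \phi \chi(p)^2 b^2 \tilde\chi$, modulo the $\Ell(Q_2')$ piece from $\psi$. In the third term, the action of $H_p$ on $b$ picks up a factor $H_p(\sfr_+) \log \ang{z}$ from the variable spatial weight, which by the monotonicity condition \eqref{weight-monotone} has the same (good) sign on $\Sigma(P)$, together with a lower-order piece from $H_p$ hitting the smooth factors of $b$. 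Altogether one obtains
\begin{equation*}
i[P, A] = B^* B + E' + E'' + R,
\end{equation*}
with $B$ elliptic of order $(s, \sfr_+)$ on $\WF'(Q_2)$, $E'$ microsupported in $\Ell(Q_2')$ (controlled by $\|Q_2' u\|_{s, \sfr_+}$), $E''$ a nonnegative contribution from the variable-order monotonicity, and $R$ a lower-order remainder (controlled by $\|u\|_{M, N}$).

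The estimate then follows by pairing with $u$ via the identity
\begin{equation*}
\langle i[P, A] u, u \rangle = 2\,\Im \langle A u, P u \rangle,
\end{equation*}
bounding the right-hand side by Cauchy-Schwarz (using microlocal elliptic regularity to insert $G_2$ into $Pu$, since $A$ is microsupported in $\Ell(G_2)$) by $\tfrac{1}{2}\|Bu\|^2 + C \|G_2 Pu\|_{s-2, \sfr_+ + 1}^2$, and absorbing. The main obstacle is justifying the pairing $\langle i[P,A]u, u\rangle$ when $u$ is a priori only in $H^{M, N}$, which is not generally enough to make sense of $Au \in L^2$. This is handled by the standard regularization scheme: replace $A$ by $A_\epsilon = \Lambda_\epsilon A \Lambda_\epsilon$, where $\Lambda_\epsilon \in \Psi_{\text{sc}}^{-N_0, -N_0}$ has nonnegative real symbol and converges to $\Id$ boundedly in $\Psi_{\text{sc}}^{\boldsymbol\delta, \boldsymbol\delta}$ as $\epsilon \to 0$. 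One derives the estimate uniformly in $\epsilon$ and passes to the limit; the positivity of the main commutator symbol is preserved since $\Lambda_\epsilon$ has nonnegative symbol and $[P, \Lambda_\epsilon]$ remains uniformly lower order and absorbable. The small $\boldsymbol\delta$-losses inherent to the variable-order calculus are harmless since $\boldsymbol\delta$ can be chosen arbitrarily small.
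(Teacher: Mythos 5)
The paper does not prove this proposition; it is quoted directly from \cite[Thm.~5.4]{va18}, with the underlying argument going back to Melrose \cite[\S 9]{RBMSpec} and ultimately to H\"ormander. Your positive-commutator sketch is, in outline, exactly the argument one would find in those sources, so it is the ``intended'' proof rather than a genuinely different route. The structure is correct: a commutant of order $(2s-1, 2\sfr_+ +1)$ with square principal symbol $a^2$, $a = \phi\,\chi(p)\,b$; a transport inequality $H_p\phi \ge c\,\tilde\chi - \psi$ built using the forward-flow hypothesis \eqref{eq: flow prop} so that $\phi$ is microsupported in $\Ell(G_2)$ and strictly positive on the part of $\WF'(Q_2)$ reached by flow from $\Ell(Q_2')$; a decomposition $i[P,A] = B^*B + E' + E'' + R$; pairing via $\langle i[P,A]u,u\rangle = 2\Im\langle Au,Pu\rangle$; and a $\Lambda_\epsilon$-regularization with uniformly absorbable commutator error. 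Two small points worth tightening: the term $2\phi^2\chi\chi'(p)\,b^2\,H_p p$ vanishes identically, not merely on $\Sigma(P)$, since $H_p p \equiv 0$; and the ``main positive contribution'' you display carries an overall factor of $\phi$, so ellipticity of $B$ on $\WF'(Q_2)$ relies on $\phi$ itself (and not only $H_p\phi$) being bounded below there, which is guaranteed by integrating the transport equation from a strictly positive initial value on $\Ell(Q_2')$ --- an implicit step that should be made explicit. Finally, the sign of the variable-order contribution $E''$ depends on the direction of propagation being aligned with the monotonicity \eqref{weight-monotone}; your assertion that it is ``good'' is correct precisely because \eqref{weight-monotone} stipulates that $\sfr_+$ is nonincreasing along the forward Hamilton flow and the hypothesis \eqref{eq: flow prop} is a forward-flow hypothesis, but it would be clearer to note that these two conditions are designed to match. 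None of these affect the validity of the argument.
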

That is, if $u$ is in $H^{s, \sfr}$ microlocally near a point $\alpha' \in \Sigma(P)$, if $\alpha$ is another point on the bicharacteristic $\gamma$ through $q$, and if $Pu$ is sufficiently regular (namely, in $H^{s-2, \sfr+1}$) along $\gamma$ between $\alpha'$ and $\alpha$, then the regularity `propagates' to $\alpha$, in the sense that  $u$ is in $H^{s, \sfr}$ at $\alpha$, \emph{provided}, in the case of a variable order $\sfr$, that $\sfr$ is nonincreasing between $\alpha'$ and $\alpha$ in the direction of bicharacteristic flow. (If a variable weight is nondecreasing in the direction of bicharacteristic flow, as is the case with $\sfr_-$, then regularity propagates in the opposite direction.) 

Neither of these estimates gives any information at the radial sets,
which are the locations within $\Sigma(P)$ where the Hamilton vector
field vanishes. At these sets, we have the following radial point
estimates, which come in two versions, one below and one above the
spatial regularity level $-1/2$ that is critical for the behaviour of solutions of $Pu = 0$. We only state these for constant spatial weight, which suffices as we have assumed that $\sfr_\pm$ are constant in a neighbourhood of the radial sets. We also have stated this proposition with $\sfr_+$ in mind, thus the below threshold result applies at the outgoing radial set $\mathcal{R}_+$, while the above threshold result applies at the incoming radial set $\mathcal{R}_-$; for the other weight $\sfr_-$, the roles of the incoming and outgoing radial sets switch.  
It will help in understanding the statements below to recall that $\mathcal{R}_+$ is a sink, and $\mathcal{R}_-$ a source, for the bicharacteristic flow, and all bicharacteristics inside $\Sigma(P)$ start at $\mathcal{R}_-$ and end at $\mathcal{R}_+$. Again, these estimates were first made by Melrose in \cite[Section 9]{RBMSpec}. 

\begin{prop}[{{\cite[Prop.\ 5.27]{va18}}}] \label{radial point prop}
\
  \begin{enumerate}
\item  Below threshold regularity radial point estimate: Assume $\sw < -1/2$.
  Let $Q_3, Q_3', G_3 \in \Psi_{\emph{sc}}^{0,0}$. Let $U, U'$ denote two open neighborhoods of $\mathcal{R}_+$ with $U \Subset U' \Subset  \Tsc^*_{\p M} M$, and assume $U \subset \Ell(Q_3) \subset \WF'(Q_3) \subset  \Ell(G_3) \subset U'$. Assume that $\WF'(Q_3')$ is contained in $U' \setminus U$ and that, 
   \begin{equation} \label{eq: flow prop 2}
   \begin{gathered}
  \text{for every $\alpha \in \WF'(Q_3) \cap (\Sigma(P) \setminus \mathcal{R}_+)$, there is a  point $\alpha' \in \Ell(Q_3')$} \\
\text{and a forward bicharacteristic segment $\gamma$ from $\alpha'$ to $\alpha$ such that $\gamma  \subseteq \Ell(G_3)$.}
   \end{gathered}
   \end{equation}
    If $Q_3'u \in H^{s, \sw}$ and $G_3Pu \in H^{s-2, \sw + 1}$, then $Q_3u \in H^{s,\sw}$, and for all $M, N$ there is $C > 0$
   such that if $u \in H^{M,N}$, then 
   \begin{equation}
     \label{eq: low-reg}
     \norm[s,\sw]{ Q_3 u } \le C \left(\norm[s,\sw]{Q_3'u} + \| G_3 P u \|_{s-2, \sw +
     1} + \norm[M,N]{u}\right).
  \end{equation}   

  \item Above threshold regularity: Assume $\sw,\sw' >
    -1/2$ and $s, s' \in \mathbb{R}$. Let $ U_- \Subset \scTstar M$ be a sufficiently small neighborhood of $\mathcal{R}_-$. Then for all $Q_4, G_4 \in \Psi_{\emph{sc}}^{0,0}$ such that 
    $$
    \mathcal{R}_- \subset \Ell(Q_4) \subset \WF'(Q_4) \subset \Ell(G_4) \subset  U_-, 
    $$
 if  $G_4 Pu \in H^{s-2, \sw+1}$ and $G_4 u \in H^{s',\sw'}$, then $Q_4u \in H^{s,\sw}$. Moreover, for all $M,N$, there is $C > 0$ so that if $u \in H^{M,N}$, then
    \begin{equation}
      \label{eq: high-reg}
      \norm[s,\sw]{Q_4u} \le C\left(  \norm[s',  \sw']{G_4 u} + \norm[s-2, \sw +
1]{G_4 Pu}  + \norm[M,N]{u}\right).
    \end{equation}

 \end{enumerate}
\end{prop}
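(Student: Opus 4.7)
The plan is to prove both parts by positive commutator arguments localized at the radial sets, exploiting the nondegenerate linearization of $\mathsf{H}_p$ there. Recall that $\mathcal{R}_+$ is a sink and $\mathcal{R}_-$ a source, with the coefficient of $x\partial_x$ in $\mathsf{H}_p$ at $\mathcal{R}_\pm$ equal to $\mp 2\lambda$. This nonvanishing first-order behavior is what permits positive commutator estimates despite $\mathsf{H}_p$ itself vanishing at the radial sets.

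For part (i), the below-threshold estimate at $\mathcal{R}_+$, I would construct a self-adjoint commutant $A \in \Psi_{\text{sc}}^{s-1/2,\,\sw-1/2}(M^\circ)$ with real principal symbol of the form
$$
a = \phi_0(x)\,\phi_1(\rho_+)\, x^{-\sw+1/2}\,\rho^{-s+1/2},
$$
where $\rho_+$ is a nonnegative defining function for $\mathcal{R}_+$ inside $\Sigma(P) \cap \{x=0\}$ and $\phi_0,\phi_1$ are nonnegative cutoffs microsupported in $U'$ and elliptic on $U$. Using \eqref{ac-Hvf}, direct computation yields
$$
-\mathsf{H}_p (a^2) = (2\sw+1)(2\lambda)\,a^2 \;+\; r_0 \;+\; r_1,
$$
where $r_0$ captures the transverse linearization at $\mathcal{R}_+$ (which has definite sign because $\mathcal{R}_+$ is a sink) and $r_1$ is supported where $\phi_0',\phi_1'$ are nonzero, which we arrange to lie in $\Ell(Q_3')$. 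Since $\sw < -1/2$ gives $(2\sw+1) < 0$, the principal term has the correct sign, and after the standard passage from symbols to operators we obtain an identity $i[P,A^2] = B^*B + E_0 + E_1 + R$ with $B$ elliptic near $\mathcal{R}_+$, $\WF'(E_0) \subset \Ell(Q_3')$, $E_1$ controllable by $G_3 P u$, and $R$ lower order. Pairing against $u$ and using $\langle i[P,A^2]u, u\rangle = 2\Im\langle APu, Au\rangle$, followed by Cauchy–Schwarz, yields \eqref{eq: low-reg}.

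For part (ii), the above-threshold estimate at $\mathcal{R}_-$, the same construction applies with $\mathcal{R}_+$ replaced by $\mathcal{R}_-$, but now the $x\partial_x$ coefficient of $\mathsf{H}_p$ has opposite sign, and the assumption $\sw > -1/2$ makes the analogous factor $(2\sw+1)$ positive rather than negative. The commutator identity therefore delivers $-B^*B$ on the positive side, and the term containing $u$ itself must be absorbed using the a priori regularity $G_4 u \in H^{s',\sw'}$. One closes the estimate by iterating: starting from the hypothesized $(s',\sw')$ regularity, a single application of the commutator argument bootstraps the regularity in $\sw$ by a fixed positive increment (and/or in $s$), and finitely many iterations reach the target order $(s,\sw)$.

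The main obstacle is the rigorous justification of the pairing $\langle i[P,A^2] u, u\rangle$: the commutant $A$ is not a priori bounded on the Sobolev spaces in which $u$ lives (we only assume $u \in H^{M,N}$). This is handled by a standard regularization: replace $A$ by $A_\epsilon = A \Lambda_\epsilon$ with $\Lambda_\epsilon \in \Psi_{\text{sc}}^{-1,-1}$ a family uniformly bounded in $\Psi_{\text{sc}}^{0,0}$ with $\Lambda_\epsilon \to \Id$ as $\epsilon \to 0$, verify a uniform-in-$\epsilon$ commutator identity where the regularization error is bounded independently of $\epsilon$, and pass to the limit. The second technical point is the careful microlocal bookkeeping needed to arrange that every error term in the symbol-level decomposition of $-\mathsf{H}_p(a^2)$ is either part of the principal positive term, or falls inside $\Ell(Q_3')$ (respectively $\Ell(G_4)$), or can be folded into the $\|u\|_{M,N}$ background term via ellipticity in the calculus.
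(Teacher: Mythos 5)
The paper does not give a proof of Proposition~\ref{radial point prop}: it is imported verbatim, as the bracketed citation indicates, from Vasy's lecture notes \cite[Prop.\ 5.27]{va18}, and the text moves directly to the patching argument in Section~\ref{subsec:global}. Your sketch is the standard positive-commutator radial-point estimate, which is indeed how the cited result is proved (going back to Melrose \cite[Sect.\ 9]{RBMSpec}), so your approach matches the proof the paper is relying on even though the paper itself omits it. It is also the scalar precursor of the argument the paper \emph{does} carry out in detail for the module version, Proposition~\ref{radial point prop mod}, where the commutant becomes matrix-valued over multi-indices and the $P$-criticality of $\mathcal{N}$ plays the role of your transverse sign condition.

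Two bookkeeping points in your symbol algebra are off, though neither affects the structure. First, the spatial exponent in the commutant symbol should be $x^{-\sw-1/2}$, not $x^{-\sw+1/2}$: since $\sigma(i[P,A^*A]) = -x\,\mathsf{H}_p(a^2)$ and the extra factor of $x$ comes from rescaling the Hamilton vector field, you need $a^2 \sim x^{-2\sw-1}$ to produce a commutator symbol of spatial order $2\sw$, i.e.\ the square of a symbol of order $(s,\sw)$ so that $\|Bu\|_{L^2} \approx \|u\|_{s,\sw}$ on the elliptic set of $B$. With that correction, the weight contribution to $-\mathsf{H}_p(a^2)$ at $\mathcal{R}_+$ is $-2\lambda(2\sw+1)\,a^2$, which is \emph{positive} when $\sw < -1/2$; your statement that $(2\sw+1)<0$ "gives the correct sign" is right, but the written coefficient $(2\sw+1)(2\lambda)$ is the negative of what it should be. Second, it is worth being explicit that the cutoff term $-\mathsf{H}_p(\phi_1^2(\rho_+))$ has the \emph{opposite} sign from the weight term (it is $\le 0$ because $\mathcal{R}_+$ is a sink), so it is an error rather than part of the positive piece; its support in the annulus $U'\setminus U$ is precisely what makes it controllable by $Q_3'u$, which is the role you correctly assign it. Your treatment of part (ii), the regularization argument, and the microlocal bookkeeping of the error terms are all consistent with the cited proof.
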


\subsection{Fredholm estimate}\label{subsec:global}
In this subsection we explain how to piece together the microlocal
estimates to produce a global estimate. See also Vasy,
\cite[Sect. 5.4.6]{va18} where this piecing together of estimates is
discussed in terms of wavefront sets.

We first note that, if $U'$ and $U_-$ are chosen small enough such that $\sfr_+$ is constant, equal to $-1/2 - \delta$ near $U'$ and $-1/2 + \delta$ near $U_-$, and if we choose $r$ to be equal to these respective values in \eqref{eq: low-reg} and \eqref{eq: high-reg}, then we can deduce the estimates with the variable weight $\sfr_+$, as follows: 
\begin{equation}
     \label{eq: low-reg 2}
     \norm[s,\sfr_+]{ Q_3 u } \le C \left(\norm[s,\sfr_+]{Q_3'u} + \| G_3 P u \|_{s-2, \sfr_+ +
     1} + \norm[M,N]{u}\right)
  \end{equation}   
  and 
 \begin{equation}
      \label{eq: high-reg 2}
      \norm[s,\sfr_+]{Q_4u} \le C\left(  \norm[s',  r']{G_4 u} + \norm[s-2, \sfr_+ +
1]{G_4 Pu}  + \norm[M,N]{u}\right).
    \end{equation}  
This is because the `microlocal difference' between, say, the norms $\norm[s,-1/2 - \delta]{ Q_3 u }$ and $\norm[s,\sfr_+]{ Q_3 u }$ is disjoint from the microlocal support of $Q_3$, so the difference can be controlled by $\norm[M,N]{u}$ for arbitrary $M$ and $N$; exactly the same argument applies to each of the other terms in these two estimates.

We then combine the estimates \eqref{eq: elliptic reg}, \eqref{eq: standard-prop}, \eqref{eq: low-reg 2} and \eqref{eq: high-reg 2}, noting that we may assume that $Q_1 + Q_2 + Q_3 + Q_4 = \Id$.  In addition, we may assume that $Q_2' = Q_4$ and $Q_3' = Q_2$, as these satisfy the propagation conditions in \eqref{eq: flow prop} and \eqref{eq: flow prop 2}. We add up these estimates, building in a large multiple of the second estimate and an even larger multiple of the fourth. Thus, for a constant $C$ equal to the maximum constant in the four estimates \eqref{eq: elliptic reg}, \eqref{eq: standard-prop}, \eqref{eq: low-reg} and \eqref{eq: high-reg}, we have 
\begin{equation}\begin{aligned}
\| u \|_{s, \sfr_+} &\leq \| Q_1u \|_{s, \sfr_+} + K  \| Q_2 u \|_{s, \sfr_+} + \| Q_3u \|_{s, \sfr_+} + K^2 \| Q_4u \|_{s, \sfr_+} \\
\leq C &\bigg( \| G_1Pu\|_{s-2, \mathsf{l}_+} + \| u\|_{M,N}  + K \Big( \norm[s,\sfr_+]{Q_4u} +  \| G_2 P u \|_{s-2, \sfr_+ + 1} +  \norm[M,N]{u} \Big) \\
     + \norm[s,\sfr_+]{Q_2u} + &\| G_3 P u \|_{s-2, \sfr_+ +1} + \norm[M,N]{u} + K^2 \Big( \norm[s',  \sw']{G_4 u} +  \norm[s-2, \sfr_+ +1]{G_4 Pu}  +  \norm[M,N]{u} \Big) \bigg)
\end{aligned}\end{equation}

Next, we estimate the $Q_2$ and $Q_4$ terms on
the RHS as just done, by using \eqref{eq: standard-prop}, respectively \eqref{eq: high-reg 2}. Notice that estimating the $Q_2$ term produces an additional $Q_4$ term, we again estimate using \eqref{eq: high-reg 2}. So, finally, we obtain, for a new larger constant $C$ (noting that $s'$ is arbitrary),
\begin{equation}\label{eq:real-final-fred-est}
\| u \|_{s, \sfr_+} \le C \left( \| Pu\|_{s-2, \mathsf{r}_+ + 1} + \| u\|_{M,N} + \norm[s',  r']{G_4 u} \right).
\end{equation}

Now, to handle the $G_4$ term, choose $\sw' = -1/2 + \delta/2$ and $M < s' < s$. By Sobolev space interpolation, for appropriate $\eta \in (0,1)$,
\begin{equation*}
\|G_4 u \|_{s', -1/2 + \delta/2} \le   \|G_4 u \|^{1 - \eta}_{s, -1/2 + \delta} \|G_4 u \|^\eta_{M, N}.
\end{equation*}
 We can replace the norm $\norm[s, -1/2 + \delta]{G_4 u}$ with 
$\norm[s',  \mathsf{l}_+]{G_4 u}$ since $\sfr_+ = -1/2 + \delta$ on the microlocal support of $G_4$ (which is contained in $U_-$). Then, by Young's inequality,
$$
 \norm[s,  \sfr_+ ]{G_4 u}^{1 - \eta} \norm[M, N]{G_4 u}^\eta \leq 
\epsilon \norm[s,  \sfr_+ ]{u} + C(\epsilon) \norm[M, N]{u}
$$
 for arbitrary $\epsilon > 0$  provided $C(\epsilon)$ is sufficiently large. 
The $ \epsilon \norm[s,  \sfr_+ ]{u}$ term can be absorbed into the RHS of \eqref{eq:real-final-fred-est} and the other term is a multiple of $\norm[M, N]{u}$. This yields the Fredholm estimate \eqref{eq: var ord Fredholm est}. 

\subsection{Invertibility on variable order spaces}
We now prove Theorem~\ref{variable order Fredholm est} using the result of Lemma~\ref{lem:variable order Fredholm}.
Given that the map \eqref{eq:first-invertible} is Fredholm, it only remains to show that the kernel and cokernel are both trivial. In fact, due to the formal self-adjointness of the operator $P$, this amounts to showing that if $Pu = 0$, and either $u \in H^{s,\sfr_+}$ or $H^{s, \sfr_-}$, then $u=0$. As the argument is essentially the same in both cases, we only consider the case that $u \in H^{s,\sfr_+}$.

So, assume that $u \in H^{s,\sfr_+}$ and $Pu = 0$. Then, $u$ is in
$H^{s, -1/2 + \delta}$ microlocally in a neighbourhood of
$\mathcal{R}_-$, so we can apply Proposition~\ref{radial point prop},
part (ii), and deduce that $u$ is in $H^{s, L}$ for arbitrarily large
$L$ microlocally near $\mathcal{R}_-$. The propagation theorem, 
Proposition~\ref{standard-prop}, then shows that $u$ is in $H^{s, L}$
for arbitrary $L$ everywhere on $\Sigma$ except, possibly, at
$\mathcal{R}_+$. The elliptic estimate implies that in fact, $u$ is
microlocally trivial except possibly at $\mathcal{R}_+$, in the sense
that if $A$ is such that $\WF'(A)$ is disjoint from $\mathcal{R}_+$,
then $Au$ is smooth on $M$ and all derivatives rapidly vanishing at the boundary (in particular, $O(r^{-N})$ for every $N$). 

We can thus apply \cite[Prop.\ 12]{RBMSpec}, which tells us that if $u$  
has wavefront set contained in $\mathcal{R}_+$, and $Pu \in \mathcal{S}$, then $u$ has the form 
$$
u = r^{-(n-1)/2} e^{i\lambda r} \sum_{j=0}^\infty r^{-j} v_j(y), \quad r \to \infty, \text{ where } v_j \in C^\infty(\partial M). 
$$
On the other hand, the ``boundary pairing'' lemma \cite[Prop.\ 13]{RBMSpec}
shows that the leading coefficient $v_0$ in the expansion of $u$ satisfies 
$$
- 2 i \lambda \int_{\p M} |v_0|^2 = 2 \re \int_{M}  u \overline{Pu } .
$$
Since the right hand side is zero, $v_0 \equiv 0$ and thus $u \in H^{\infty, - 1/2 +
  \delta}(M)$ for $\delta$ small enough. Thus, $u$ is above threshold decay at
both radial sets, and again using Proposition~\ref{radial point prop},
part (ii) (at the outgoing rather than incoming radial set) and $Pu =
0$, it follows that $u$ vanishes to infinity order along with its
derivatives, i.e.\ $u \in  \dot C^\infty(M) $.
Finally, we can apply  \cite[Theorem 17.2.8]{Hor}, or alternatively \cite{FHOO}, to deduce that $u
\equiv 0$. This completes the proof of Theorem~\ref{variable order Fredholm est}.


\begin{rem}
The fact that $P^{-1}$ on these variable order spaces is
equal to the action of the outgoing resolvent is shown in \cite[Sect.\ 11]{RBMSpec}, or \cite{Vasy-LAP}. 
 \end{rem}

\subsection{Module regularity}
The next step is to adapt the argument above to the module regularity spaces $H_+^{s,\sw;\kappa, \smo}$ instead of variable order spaces $H^{s, \sfr_+}$. We shall prove each of the microlocal estimates above in the module regularity setting. 

 \begin{prop}[microlocal elliptic regularity --- module version] \label{elliptic reg mod}
 Let $u \in \mathcal{S}'$ and let $Q_1, G_1 \in \Psi_{\emph{sc}}^{0,0}$ be such that
 $\WF'(Q_1) \subset \Ell(G_1) \cap \Ell(P)$. Assume $G_1Pu \in H_+^{s-2,
  \sw; \kappa,\smo}$. Then $Q_1u \in H_+^{s, \sw; \kappa,\smo}$, and for all $M,N \in \R$,
 there is a constant $C > 0$ such that if $u \in H^{M,N}$, then
 \begin{equation} \label{eq: elliptic reg mod}
 \| Q_1u \|_{s, \sw; \kappa, \smo} \le C \left( \| G_1Pu\|_{s-2, \sw; \kappa,\smo}  + \| u\|_{M,N} \right).
 \end{equation} 
 \end{prop}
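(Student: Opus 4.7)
My plan is to reduce to the standard (non-module) elliptic estimate in Proposition \ref{elliptic reg} by building an elliptic parametrix for $P$ on $\Ell(P)$ and commuting the module generators through it. The key observation that makes this work is a filtration count: any generator of $\mathcal{M}_+$ or $\mathcal{N}$ lies in $\Psi_{\text{sc}}^{1,1}$, whereas a parametrix $E$ of $P$ on its elliptic set lies in $\Psi_{\text{sc}}^{-2,0}$. The product $\mathsf{A} E$ is of order $(-1,1)$, so the commutator $[\mathsf{A}, E]$ drops one in each filtration and is again of order $(-2,0)$, i.e., the \emph{same} order as $E$. Iterated commutation therefore does not raise the parametrix order, and the induction will close.

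Concretely, I would proceed as follows. Using the hypothesis $\WF'(Q_1) \subset \Ell(G_1) \cap \Ell(P)$, the standard scattering elliptic parametrix construction produces $E \in \Psi_{\text{sc}}^{-2,0}$ with $\WF'(E) \subset \Ell(G_1) \cap \Ell(P)$ together with $\tilde R \in \Psi_{\text{sc}}^{-\infty,-\infty}$ such that
\begin{equation*}
Q_1 = E\, G_1 P + \tilde R,
\end{equation*}
where the $G_1$ factor is inserted by microlocally inverting $G_1$ on $\WF'(E) \subset \Ell(G_1)$ and absorbing the resulting smoothing error into $\tilde R$. For any $|\alpha| \le \kappa$, $|\beta| \le \smo$, applying $\mathsf{A}^\alpha \mathsf{B}^\beta$ to $Q_1 u = E(G_1 P u) + \tilde R u$ splits the output into two pieces. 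The $\tilde R u$ piece is immediately controlled by $C \|u\|_{M,N}$ since $\mathsf{A}^\alpha \mathsf{B}^\beta \tilde R \in \Psi_{\text{sc}}^{-\infty,-\infty}$.

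For the main term $\mathsf{A}^\alpha \mathsf{B}^\beta E(G_1 P u)$, induction on $|\alpha|+|\beta|$ using the commutator observation above yields the expansion
\begin{equation*}
\mathsf{A}^\alpha \mathsf{B}^\beta E \;=\; \sum_{|\alpha'| + |\beta'| \le |\alpha| + |\beta|} E_{\alpha'\beta'}\, \mathsf{A}^{\alpha'} \mathsf{B}^{\beta'} \pmod{\Psi_{\text{sc}}^{-\infty,-\infty}},
\end{equation*}
with each $E_{\alpha'\beta'} \in \Psi_{\text{sc}}^{-2, 0}$. By the mapping property of scattering pseudodifferential operators (Lemma \ref{lem:psdo-mapping}), each term bounds by $C \|\mathsf{A}^{\alpha'} \mathsf{B}^{\beta'} G_1 P u\|_{s-2,\sw}$, and the sum is therefore at most $C\|G_1 P u\|_{s-2, \sw; \kappa, \smo} + C\|u\|_{M,N}$. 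Summing finally over $|\alpha| \le \kappa$ and $|\beta| \le \smo$ produces \eqref{eq: elliptic reg mod}.

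The main technical nuisance I expect is the bookkeeping of the commutator expansion, since the generators $\mathsf{A}_j$, $\mathsf{B}_k$ do not commute pairwise; however, because $\mathcal{M}_+$ and $\mathcal{N}$ are closed under commutators with $\Psi_{\text{sc}}^{0,0}$-coefficients, each pairwise commutator reshuffles into a $\Psi_{\text{sc}}^{0,0}$-combination of lower-degree products of generators, and the induction closes without difficulty. It is worth noting that this elliptic module estimate uses only the microlocal invertibility of $P$ and therefore does \emph{not} require the $P$-criticality of $\mathcal{N}$ at the radial sets (Lemma \ref{thm:posititivy-criticality}); that hypothesis will instead be essential for the propagation and radial-point estimates in the non-elliptic regime.
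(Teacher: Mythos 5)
Your argument is correct, but it takes a genuinely different route from the paper's. The paper proves this by induction on the module orders $(\kappa, \smo)$: it commutes the generators $\mathsf{A}^\alpha \mathsf{B}^\beta$ through both $Q_1$ and $G_1P$, applies the scalar elliptic estimate (Proposition~\ref{elliptic reg}) with $A_1\cdots A_m B_1 \cdots B_l u$ in the role of the test distribution, and then invokes the inductive hypothesis to control the multi-commutator terms of lower module degree. You instead construct an elliptic parametrix $E \in \Psi_{\text{sc}}^{-2,0}$ with $Q_1 = E G_1 P + \tilde R$ and commute the generators through $E$ in a single pass, using the filtration count $(1,1) + (-2,0) - (1,1) = (-2,0)$ to see that iterated commutation does not raise the parametrix order. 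This is essentially the ``dual'' of the paper's observation that $[A_j, P]$ divided by $x$ remains in $\Psi_{\text{sc}}^{2,0}$; your version opens the box of Proposition~\ref{elliptic reg} and internalizes the parametrix, which buys a flatter argument with no nested induction on regularity orders, at the cost of having to manage the full commutator expansion in one step. Both routes use only the closure of $\mathcal{M}_+$ and $\mathcal{N}$ under commutators (over $\Psi_{\text{sc}}^{0,0}$) and the ellipticity of $P$ on $\WF'(Q_1)$, and your closing observation that $P$-criticality of $\mathcal{N}$ plays no role here matches the paper.

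One minor imprecision worth fixing: you state the expansion with the constraint $|\alpha'| + |\beta'| \le |\alpha| + |\beta|$, but what you actually obtain (and need, so that each term on the right is dominated by $\|G_1 P u\|_{s-2, \sw; \kappa, \smo}$) is the pair of separate constraints $|\alpha'| \le |\alpha|$ and $|\beta'| \le |\beta|$. These do hold: since the generators already stand in the order $\mathsf{A}^\alpha \mathsf{B}^\beta$, you first commute $E$ past the $\mathsf{B}$'s, each commutator absorbing one $\mathsf{B}$ into a coefficient of order $(-2,0)$ (and introducing no new $\mathsf{A}$'s), and then commute the resulting coefficients past the $\mathsf{A}$'s, similarly absorbing $\mathsf{A}$'s; any reordering that arises from the non-commutativity of the generators is rewritten into standard form using the closure of each module under commutators with $\Psi_{\text{sc}}^{0,0}$ coefficients, without increasing either exponent. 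With this sharpening the argument closes as you describe.
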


\begin{proof} We prove this by induction on $(\kappa, k)$. For $(\kappa,k) = (0,0)$ this is just Proposition~\ref{elliptic reg}.
Now assume, for a given $(\kappa, k)$ that the result is true for all
$(\kappa', k') < (\kappa, k)$ in the sense that $\kappa' \leq \kappa$,
$k'\leq k$ and $(\kappa',k') \neq (\kappa, k)$. Then, generators $A_1, \dots, A_m$ of $\mathcal{M}_+$ and $B_1, \dots, B_l$ of $\mathcal{N}$, we have 
\begin{multline}
A_1 \cdots A_m B_1 \cdots B_l Q_1 u = Q_1 A_1 \cdots A_m B_1 \cdots B_l u \\ + \sum_{j=1}^m A_1 \cdots [A_j, Q_1] \cdots A_m B_1 \cdots B_l u+ \sum_{j=1}^l A_1 \cdots A_m B_1 \cdots [B_j, Q_1] \cdots B_l u .
\end{multline}
We can shift the commutator factors to the left of the product modulo
double commutator factors, shift the  double commutator factors to the
left modulo triple commutator factors and so on. Notice that all of
these multiple commutator factors are order $(0,0)$, with microlocal
support no bigger than $\WF'(Q_1)$, hence contained in the elliptic
set of $G_1$. Note first that $G_1 P A_1 \dots A_m B_1 \dots B_lu \in
H^{s-2, \sw}.$  Indeed this can be seen by writing the operator in
terms of commutators as
\begin{multline}
G_1 P A_1 \cdots A_m B_1 \cdots B_l = A_1 \cdots A_k B_1 \cdots B_l
G_1 P \\ + \sum_{j=1}^m A_1 \cdots [G_1 P, A_j] \cdots A_m B_1 \cdots
B_l + \sum_{j=1}^l A_1 \cdots A_m B_1 \cdots [G_1 P, B_j] \cdots B_l,
\end{multline}
so using that $[G_1 P, A_j],  [G_1 P, B_j] \in \Psi^{2, 0}$, we can therefore apply 
Proposition~\ref{elliptic reg} to obtain 
\begin{equation}\label{comm-elliptic}
  \begin{gathered}
    \| A_1 \cdots A_m B_1 \cdots B_l Q_1 u \|_{s,\sw} \lesssim \| G_1
    P A_1 \cdots A_m B_1 \cdots B_l u \|_{s-2, \sw} \\
    + \text{
      commutator terms} + \| u \|_{M, N}.
  \end{gathered}
\end{equation}
and we can perform a similar process as above, shifting the commutator factors to the left modulo double commutator factors, shifting those to the left modulo triple commutator factors, and so on. Each of these multiple commutator factors are order $(2,0)$. Substituting into \eqref{comm-elliptic} we obtain
\begin{equation}
\begin{split}
\| A_1 \cdots A_m B_1 \cdots& B_l Q_1 u \|_{s,\sw} \lesssim \| A_1 \cdots
A_m B_1 \cdots B_l G_1 P u \|_{s-2, \sw} \\&+ \sum \| \tilde{C}_\bullet \prod
A_\bullet \prod
B_\bullet u \|_{s, \sw}+ \sum \| C_\bullet \prod
A_\bullet \prod
B_\bullet u \|_{s-2, \sw} + \|u\|_{M,N},
\end{split}
\end{equation}
where $C_\bullet \in \Psi_{\text{sc}}^{2,0}$ and $\tilde{C}_\bullet
\in \Psi_{\text{sc}}^{0,0}$ are multi-commutators with wavefront set
contained in $\WF'(Q_1)$, and we have fewer than $\kappa+k$ factors of the $A_\bullet$ and the $B_\bullet$ in total. The other terms in \eqref{comm-elliptic} are estimated similarly. We thus obtain 
\begin{equation}
\| A_1 \cdots A_m B_1 \cdots B_l Q_1 u \|_{s,\sw} \lesssim \| G_1 P u \|_{s-2, \sw; \kappa, \smo} + \sum_{(\kappa',k') < (\kappa,k)} \| G' u \|_{s, \sw; \kappa', k'} + \| u \|_{M, N},
\end{equation}
where $G'$ is chosen so that $\WF'(Q_1) \subset \Ell(G') \subset
\WF'(G') \subset \Ell(G_1)$ and $\WF(I - G')  \cap \WF'(Q_1) = \varnothing$. These conditions imply that $G' C = C$
modulo an operator of order $(-\infty, -\infty)$ which contributes to the $ \| u \|_{M, N}$ term.  We apply the inductive assumption to the term $G'u$, where $G'$ now plays the role of $Q_1$, and arrive at 
\begin{equation}
\| A_1 \cdots A_k B_1 \cdots B_l Q_1 u \|_{s,\sw} \lesssim \| G_1 P u \|_{s-2, \sw; \kappa, \smo}  + \| u \|_{M, N}. 
\end{equation}
After summing over all possible choices of the $A_1 \dots A_m$ and the
$ B_1 \dots B_l$ we obtain \eqref{eq: elliptic reg mod}.  \end{proof}

\begin{prop}[propagation of regularity estimate --- module version] \label{standard-prop mod}
   Let $u \in \mathcal{S}'$ and let $Q_2, Q_2', G_2 \in \Psi_{\emph{sc}}^{0,0}$. Assume $\WF'(Q_2) \subseteq \Ell(G_2) \setminus (\mathcal{R}_+ \cup \mathcal{R}_-)$. Moreover, assume that \eqref{eq: flow prop} holds. 
    If $Q_2'u \in H_+^{s, \sw; \kappa,\smo}$ and $G_2Pu \in H_+^{s-2, \sw+ 1; \kappa,\smo}$, then $Q_2u \in H_+^{s,\sw; \kappa,\smo}$, and for all $M, N$ there is $C > 0$
   such that if $u \in H^{M,N}$, then 
   \begin{equation}
     \label{eq: standard-prop mod}
     \norm[s,\sw; \kappa,k]{ Q_2 u } \le C \left(\norm[s,\sw; \kappa,k]{Q_2'u} + \| G_2 P u \|_{s-2, \sw +1; \kappa,\smo} + \norm[M,N]{u}\right).
 \end{equation}
  \end{prop}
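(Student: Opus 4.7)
The plan is to mirror the inductive proof of Proposition~\ref{elliptic reg mod} just given in the excerpt, inducting on $\kappa + k$ with base case $(\kappa, k) = (0, 0)$ supplied directly by Proposition~\ref{standard-prop}. To bound $\|A^\alpha B^\beta Q_2 u\|_{s,\sw}$ for $|\alpha| \le \kappa$ and $|\beta| \le k$, I first commute all the module generators to the inside of $Q_2$, producing
$A^\alpha B^\beta Q_2 u = Q_2 A^\alpha B^\beta u + \sum_{\mathrm{multi\text{-}comm}} \tilde C_\bullet \prod A_\bullet \prod B_\bullet u$,
where each $\tilde C_\bullet \in \Psi_{\text{sc}}^{0,0}$ has microlocal support inside $\WF'(Q_2) \subset \Ell(G_2)$ and is attached to strictly fewer than $\kappa + k$ module factors. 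Each remainder term has the form of a lower-module-order problem microlocalized by some $G'$ elliptic on $\WF'(\tilde C_\bullet)$ and satisfying the flow hypothesis \eqref{eq: flow prop}, so it is handled by the inductive hypothesis.

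For the main term $\|Q_2 A^\alpha B^\beta u\|_{s,\sw}$, I apply Proposition~\ref{standard-prop} (with $A^\alpha B^\beta u$ in place of $u$) to obtain the bound by $\|Q_2' A^\alpha B^\beta u\|_{s,\sw} + \|G_2 P A^\alpha B^\beta u\|_{s-2,\sw+1} + \|A^\alpha B^\beta u\|_{M',N'}$. The first summand, after commuting $Q_2'$ inside the module product, produces $\|Q_2' u\|_{s,\sw;\kappa,\smo}$ plus inductively controlled remainders. The third is absorbed into $\|u\|_{M,N}$ by choosing $M$ and $N$ sufficiently negative so that $A^\alpha B^\beta\colon H^{M,N}\to H^{M',N'}$ with the orders matching.

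The substantive step is the middle term. Write $G_2 P A^\alpha B^\beta u = G_2 A^\alpha B^\beta Pu + G_2 [P, A^\alpha B^\beta] u$. Commuting $G_2$ past the module factors in the first summand produces $\|G_2 P u\|_{s-2,\sw+1;\kappa,\smo}$ together with $\Psi_{\text{sc}}^{0,0}$-commutator terms with fewer module factors, acting on $Pu$, which are absorbed into the data norm. For the second summand I Leibniz-expand
$[P, A^\alpha B^\beta] = \sum A^{\alpha_1} B^{\beta_1} [P, C] A^{\alpha_2} B^{\beta_2}$,
where $C$ is a single generator of $\mathcal{M}_+$ or $\mathcal{N}$ and $|\alpha_1|+|\alpha_2|+|\beta_1|+|\beta_2| = |\alpha|+|\beta|-1$. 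Since $C \in \Psi_{\text{sc}}^{1,1}$ and $P \in \Psi_{\text{sc}}^{2,0}$, each $[P, C] \in \Psi_{\text{sc}}^{2,0}$, and further commuting the outer module factors past $[P, C]$ only produces iterated commutators of module generators with an element of $\Psi_{\text{sc}}^{2,0}$, which remain in $\Psi_{\text{sc}}^{2,0}$ by the bi-filtered product rule. Rearranging, each resulting term is of the form $D_\bullet A^{\alpha'} B^{\beta'} u$ with $D_\bullet \in \Psi_{\text{sc}}^{2,0}$ supported in $\WF'(G_2)$ and $|\alpha'| + |\beta'| \le \kappa + k - 1$; applying the inductive hypothesis to a cutoff $G'$ elliptic on $\WF'(D_\bullet)$ gives $A^{\alpha'} B^{\beta'}u \in H_+^{s,\sw}$ microlocally on $\WF'(D_\bullet)$, hence $D_\bullet A^{\alpha'} B^{\beta'} u \in H^{s-2,\sw} \hookrightarrow H^{s-2, \sw+1}$, as desired.

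The main obstacle is the bookkeeping in this last step: ensuring that the iterated Leibniz expansion of $[P, A^\alpha B^\beta]$ can be reorganized as $\sum D_\bullet A^{\alpha'} B^{\beta'}$ with the outer operator in $\Psi_{\text{sc}}^{2,0}$ (rather than suffering extra loss from outer module factors), and that the inductive hypothesis can legitimately be invoked on each resulting piece with a cutoff $G'$ satisfying the propagation flow condition \eqref{eq: flow prop}. Crucially, because we are microlocally away from the radial sets $\mathcal{R}_\pm$ by hypothesis, no appeal to the $P$-criticality of Lemma~\ref{thm:posititivy-criticality} is needed: the naive order $(2,0)$ bound on the commutators, combined with the elementary inclusion $H^{s-2,\sw} \subset H^{s-2,\sw+1}$, provides the required $+1$ of spatial decay built into the target space on the right-hand side of \eqref{eq: standard-prop mod}.
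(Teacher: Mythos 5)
There is a genuine gap, and it lies exactly in the step you flagged as the main obstacle. You conclude with ``$D_\bullet A^{\alpha'} B^{\beta'} u \in H^{s-2,\sw} \hookrightarrow H^{s-2,\sw+1}$,'' but this inclusion is reversed in the paper's conventions. Since $H^{s,\sw}$ is defined by $\ang{D}^s\ang{z}^\sw f\in L^2$, a \emph{larger} spatial weight demands \emph{more} decay and gives a \emph{smaller} space: $H^{s,\sw_1}\subset H^{s,\sw_2}$ exactly when $\sw_1\ge\sw_2$. So $H^{s-2,\sw}\not\subset H^{s-2,\sw+1}$. Consequently the naive $(2,0)$ order count on $[P,A^\alpha B^\beta]$ produces terms in $H^{s-2,\sw}$, which is \emph{strictly weaker} than the target $H^{s-2,\sw+1}$ you need. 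The extra order of spatial decay does not come for free, and your explicit claim that ``the elementary inclusion $H^{s-2,\sw}\subset H^{s-2,\sw+1}$ provides the required $+1$ of spatial decay'' is precisely where the argument breaks.

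The missing ingredient is the \emph{ellipticity of the modules on $\WF'(Q_2)$}, which is exactly why the hypothesis $\WF'(Q_2)\subseteq\Ell(G_2)\setminus(\mathcal{R}_+\cup\mathcal{R}_-)$ was added to the module version (the paper's remark preceding the proof calls this out). Away from $\mathcal{R}_\pm$, at every point of $\WF'(Q_2)$ there is an elliptic element of $\mathcal{N}$, hence of $\mathcal{M}_+$. As a result the module norm $\|\cdot\|_{s,\sw;\kappa,\smo}$ is equivalent, microlocally on $\WF'(Q_2)$, to the ordinary scattering Sobolev norm $\|\cdot\|_{s+\kappa+\smo,\sw+\kappa+\smo}$, and the entire proposition reduces to Proposition~\ref{standard-prop} applied with indices shifted by $\kappa+\smo$. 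No induction, no commutator bookkeeping, and indeed no $P$-criticality (you were right about that part). If you want to keep the inductive commutator framework, you would still need this ellipticity: you would factor each $D_\bullet\in\Psi_{\text{sc}}^{2,0}$ microlocally as $D_\bullet = D'_\bullet A_l + (\text{lower order})$ with $A_l$ an elliptic module generator and $D'_\bullet\in\Psi_{\text{sc}}^{1,-1}$, so that $D'_\bullet A_l A^{\alpha'}B^{\beta'} u\in H^{s-1,\sw+1}\subset H^{s-2,\sw+1}$; but that is the ellipticity argument in disguise, and the paper's one-line reduction is considerably cleaner.
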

  
\begin{rem}  
Here we included the extra assumption that $\WF'(Q_2)$ is disjoint from the radial sets, so that the modules $\mathcal{M}_+$ and $\mathcal{N}$ both become \emph{elliptic} on $\WF'(Q_2)$, in the sense that at each point of $\WF'(Q_2)$ there exists a module element of  $\mathcal{N}$ (and hence also $\mathcal{M}_+$) that is elliptic. (It was not necessary to include this assumption in Proposition~\ref{standard-prop}, but we could have done so as the Proposition gives no information at the radial sets.) 
\end{rem}
\begin{proof}
When the modules are elliptic the proof becomes almost trivial. We note that the module norm $\| \cdot \|_{s,\sw; \kappa, \smo}$ is equivalent, microlocally on $\WF'(Q_2)$, to the $\| \cdot \|_{s+\kappa+k,\sw+\kappa+\smo}$ norm. So the Proposition is actually equivalent to the previous one, with a shift in orders $s$ and $\sw$ by $\kappa+k$. 
\end{proof}

In the next proposition, all operators have microlocal support in a compact region of $\scTstar M$ by assumption, thus disjoint from fibre-infinity. Hence the differential order is irrelevant for both the operators and the spaces. We write it $*$ to emphasize this irrelevance. 

\begin{prop}[Radial point estimates --- module version] \label{radial point prop mod}
  \
  \begin{enumerate}
\item  Below threshold regularity radial point estimate: Assume $\sw < -1/2$.
  Let $Q_3, Q_3', G_3 \in \Psi_{\emph{sc}}^{0,0}$. Let $U, U'$ denote two open neighborhoods of $\mathcal{R}_+$ with $U \Subset U' \Subset  \Tsc^*_{\p M} M$, and assume that $U \subset \Ell(Q_3) \subset \WF'(Q_3) \subset  \Ell(G_3) \subset U'$. Assume that $\WF'(Q_3')$ is contained in $U' \setminus U$ and that \eqref{eq: flow prop 2} holds. 
    If $Q_3'u \in H_+^{*, \sw; \kappa,\smo}$ and $G_3Pu \in H_+^{*, \sw + 1; \kappa,\smo}$, then $Q_3u \in H_+^{*,\sw; \kappa,\smo}$, and for all $M, N$ there is $C > 0$
   such that if $u \in H^{M,N}$, then
   \begin{equation}
     \label{eq: low-reg mod}
     \norm[*,\sw; \kappa,k]{ Q_3 u } \le C \left(\norm[*,\sw; \kappa,k]{Q_3'u} + \| G_3 P u \|_{*, \sw +
     1; \kappa,\smo} + \norm[M,N]{u}\right).
  \end{equation}   

  \item Above threshold regularity: Assume $\sw,\sw' >
    -1/2$. Let $ U_- \Subset \scTstar_{\partial M} M$ be a sufficiently small neighborhood of $\mathcal{R}_-$. Then for all $Q_4, G_4 \in \Psi_{\emph{sc}}^{0,0}$ such that 
    $$
    \mathcal{R}_- \subset \Ell(Q_4) \subset \WF'(Q_4) \subset \Ell(G_4) \subset  U_-, 
    $$
 if  $G_4 Pu \in H_+^{*, \sw+1; \kappa,\smo}$ and $G_4 u \in H_+^{*,\sw'; \kappa,\smo}$, then $Q_4u \in H_+^{*,\sw; \kappa,\smo}$. Moreover, for all $M,N$, there is $C > 0$ so that if $u \in H^{M,N}$, then
    \begin{equation}
      \label{eq: high-reg mod}
      \norm[*,\sw; \kappa,k]{Q_4u} \le C\left(  \norm[*,  \sw'; \kappa,k]{G_4 u} + \norm[*, \sw +
1; \kappa,k]{G_4 Pu}  + \norm[M,N]{u}\right).
    \end{equation}
 \end{enumerate}

Moreover, all of the above holds with $H_+^{*, \sw; \kappa,\smo},
\mathcal{R}_\pm$ replaced by $H_-^{*, \sw; \kappa,\smo}, \mathcal{R}_{\mp}$.
\end{prop}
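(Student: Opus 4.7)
The plan is to prove both parts by induction on the total module order $\kappa + k$, with the base case $\kappa = k = 0$ being exactly Proposition~\ref{radial point prop}. The inductive step will reduce the module regularity statements (i) and (ii) to the standard radial point estimate applied to the module-differentiated functions $v = \mathsf{A}^\alpha \mathsf{B}^\beta u$ with $|\alpha| \leq \kappa$, $|\beta| \leq k$. The key algebraic input will be the $P$-criticality of $\mathcal{N}$ at both radial sets (Lemma~\ref{thm:posititivy-criticality}) together with an analogous $P$-criticality of $\mathcal{M}_+$ at $\mathcal{R}_+$, which I would verify by a parallel computation exploiting the fact that the distinguished generator $A_N = r(D_r - \lambda) = -xD_x - \lambda/x$ annihilates the outgoing oscillation $e^{i\lambda r}$. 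At $\mathcal{R}_-$ the same generator has principal symbol $\nu - \lambda = -2\lambda \neq 0$ and is therefore elliptic, so module regularity with respect to $\mathcal{M}_+$ near $\mathcal{R}_-$ reduces to ordinary scattering regularity, simplifying part (ii).

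The core computation will be
\[
P(\mathsf{A}^\alpha \mathsf{B}^\beta u) = \mathsf{A}^\alpha \mathsf{B}^\beta (Pu) + [P,\, \mathsf{A}^\alpha \mathsf{B}^\beta]\, u .
\]
The first term inherits the improved decay $H^{*, \sw+1}$ from the hypothesis $G_\bullet Pu \in H_+^{*, \sw+1;\kappa,k}$. The commutator, expanded via Leibniz, produces a sum of terms of the form $\mathsf{D}_1 \cdot [P, \mathsf{E}] \cdot \mathsf{D}_2\, u$, where $\mathsf{E}$ is a single module generator and $\mathsf{D}_1, \mathsf{D}_2$ are products of module generators of total order at most $\kappa+k-1$. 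By $P$-criticality, each such $[P, \mathsf{E}]$ admits an expression $-i x \sum_l C_l A_l$ with $A_l$ a module generator and $C_l \in \Psi_{\text{sc}}^{1,0}$ whose principal symbol vanishes at the relevant radial set. After commuting everything into standard form, each commutator contribution amounts to at most $\kappa+k$ module generators acting on $u$, multiplied by a scattering operator of order $(1,-1)$ carrying the extra factor of $x$. I expect this to place the entire commutator in $H^{*, \sw+1}$ microlocally near $\mathcal{R}_\pm$, modulo lower module-order remainders controlled by the inductive hypothesis.

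With these commutator estimates in hand, I would apply Proposition~\ref{radial point prop} to $v = \mathsf{A}^\alpha \mathsf{B}^\beta u$ near the relevant radial set: part (i) yields the below-threshold bound on $\|Q_3 v\|_{*, \sw}$ in terms of $\|Q_3' v\|_{*, \sw}$, $\|G_3 P v\|_{*, \sw+1}$, and the background norm $\|v\|_{M,N}$; part (ii) yields the corresponding above-threshold bound from $\|G_4 v\|_{*, \sw'}$ and $\|G_4 P v\|_{*, \sw+1}$. The hypotheses $Q_3' v \in H^{*, \sw}$ and $G_4 v \in H^{*, \sw'}$ follow from $Q_3' u \in H_+^{*, \sw;\kappa,k}$ and $G_4 u \in H_+^{*, \sw';\kappa,k}$ after commuting $\mathsf{A}^\alpha \mathsf{B}^\beta$ past $Q_3'$ or $G_4$, at the cost of lower-order module terms absorbed inductively. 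Summing over $|\alpha| \leq \kappa$, $|\beta| \leq k$ yields \eqref{eq: low-reg mod} and \eqref{eq: high-reg mod}. The parallel statement for $H_-^{*, \sw; \kappa, k}$ with radial sets swapped will follow verbatim, using the $P$-criticality of $\mathcal{M}_-$ at $\mathcal{R}_-$ (its distinguished generator $r(D_r + \lambda)$ annihilates $e^{-i\lambda r}$) and the ellipticity of this generator at $\mathcal{R}_+$.

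The hard part will be the careful bookkeeping required to verify that the gain of one power of $x$ from $P$-criticality genuinely survives the iterated commutator expansion, and that every error term is either of lower module order (so the inductive hypothesis applies) or of higher-order decay (so it lies in the smoother target space $H^{*, \sw+1}$). A subsidiary technicality in part (ii) is that the above-threshold estimate requires a priori control $G_4 v \in H^{*, \sw'}$ on the module-differentiated function, which must be extracted from the module-regularity hypothesis $G_4 u \in H_+^{*, \sw'; \kappa, k}$ together with a compatibility check that the inductive chain does not demand module regularity beyond what is being proved.
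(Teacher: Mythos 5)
Your proposal takes a genuinely different route from the paper, and it has two serious gaps that I do not believe can be repaired without essentially replacing your strategy by the paper's.

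First, and most fundamentally, the commutator $[P, \mathsf{A}^\alpha \mathsf{B}^\beta]u$ is \emph{not} of lower module order. Expanding by Leibniz, each term replaces one module generator $\mathsf{A}_j$ by $[P,\mathsf{A}_j] = -ix\sum_k C_{jk}A_k$; the factor $x$ gives a gain of one order of decay, but $A_k$ is again a module generator, so the term retains a full $\kappa + k$ module factors acting on $u$. To estimate this term near $\mathcal{R}_+$ you would already need $\mathsf{A}^\alpha\mathsf{B}^\beta u \in H^{*,\sw}$ there --- which is exactly the conclusion you are trying to prove. So the step ``modulo lower module-order remainders controlled by the inductive hypothesis'' is circular. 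The vanishing of $\sigma_{1,0}(C_{jk})$ at $\mathcal{R}_+$ also does not help in the way you suggest: $\mathcal{R}_+$ is a single point of the compactified cotangent bundle, not the whole boundary $\{x=0\}$, so vanishing there confers no automatic spatial decay. That vanishing is useful only inside a positive-commutator quadratic-form argument, where it guarantees that the $C_{jk}$ contributions are small relative to the sign-definite threshold term $C_0 \sim -\lambda(2\sw+1)$. This is precisely the mechanism of the paper's proof, which follows \cite[Section 6]{HMV2004}: one sets up the quadratic form $\sum_\alpha \langle u', i[A_{\alpha,\sw+1/2}^*Q^*QA_{\alpha,\sw+1/2}, P] u'\rangle$, shows the matrix $C'$ of coefficient operators is positive definite near $\mathcal{R}_+$ (using the $P$-criticality of $\mathcal{N}$ to kill the off-diagonal and non-threshold diagonal terms), and then runs a Cauchy--Schwarz absorption argument with a regularizer $(1+\eta r)^{-1}$. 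You do not invoke any of this, and without a sign-definite structure the same-module-order commutator terms cannot be closed.

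Second, your claimed ``$P$-criticality of $\mathcal{M}_+$ at $\mathcal{R}_+$'' is false, and is not claimed in the paper either --- Lemma~\ref{thm:posititivy-criticality} asserts $P$-criticality only for the smaller module $\mathcal{N}$. A direct computation with the distinguished generator $A_+ = r(D_r - \lambda)$ gives $ix^{-1}[A_+, D_r^2] = (-2D_r + 2ix)A_+$, and the coefficient has base symbol $-2\nu = -2\lambda \neq 0$ at $\mathcal{R}_+$; indeed the sign is wrong for positivity. The paper handles $\mathcal{M}_+$-regularity near $\mathcal{R}_+$ by an entirely different mechanism: after establishing $\mathcal{N}$-regularity of order $\smo$, one rewrites the equation as $(D_r + \lambda)r^{-1}A_+ u = Pu - i(n-1)D_r u / r - r^{-2}\Omega u$, observes $D_r + \lambda$ is elliptic on $\WF'(Q_3)$, and inverts it microlocally, trading one order of $\mathcal{N}$-regularity for one order of $A_+$-regularity. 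This ``equation-manipulation'' step is what actually produces the $\kappa \geq 1$ regularity at $\mathcal{R}_+$; a commutator argument does not work there because $\mathcal{M}_+$ is not $P$-critical (indeed not even $P$-positive with the right sign) at the sink.

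Your observation that $\mathcal{M}_+$ is elliptic at $\mathcal{R}_-$ (so module regularity there is just a shift of $\sw$ by $\kappa$) is correct and is used in the paper's treatment of part (ii). But the heart of the proposition --- below-threshold $\mathcal{N}$-regularity at $\mathcal{R}_+$ and the promotion to $\mathcal{M}_+$-regularity --- requires the positive commutator and the PDE-factorization argument, respectively, neither of which appears in your outline.
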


\begin{proof}
In this case, the argument is more elaborate than the previous two proofs, and relies on the construction of a positive commutator. The key fact we use is Lemma~\ref{thm:posititivy-criticality}, that is, the $P$-positivity (in fact $P$-criticality) of module $\mathcal{N}$.  It is very similar to the argument from 
\cite[Section 6]{HMV2004}, where test modules were introduced. To avoid a long exposition about test modules and how positive commutator estimates are used to prove module regularity, we will use Section 6 of \cite{HMV2004} as a basis and only indicate the minor differences that arise in the present case. 

We first prove \eqref{eq: low-reg mod} for $\kappa = 0$, that is, when only the module $\mathcal{N}$ is involved. We fix  a basis $A_0 = \Id, \dots, A_N$ of the module $\mathcal{N}$, and use the notation $A_{\alpha}$, $\alpha = (\alpha_1, \dots, \alpha_N)$ a multi-index, for the operator 
$$
A_1^{\alpha_1} \dots A_N^{\alpha_N}
$$
and $A_{\alpha, \sw}$ for the operator $x^{-\sw} A_\alpha$. 
We note that the $A_\alpha$, as $\alpha$ ranges over all multi-indices of length $m$, together with $\Id$ forms a basis for $\mathcal{N}^m$, the vector space of sums of $m$-fold products of elements of $\mathcal{N}$, as a module over $\Psi_{\text{sc}}^{0,0}$. 

We prove the estimate by induction on $k$, the module order. For $k=0$ the result is precisely Proposition~\ref{radial point prop}. We now assume inductively that result has been proved for all $k' < k$. 
The positive commutator estimate arises from the following operator identity, which is equation (6.16) in \cite{HMV2004}. In the following, $Q$ is arbitrary, but we will choose it to be an operator which is microlocally equal to the identity on $\WF'(Q_3)$, and with $\WF'(Q) \subset \Ell(G_3)$. In the following identity, the $C_{jk}$ are defined by the commutators of $P$ with basis elements $A_j$, as in \eqref{off diag symbols vanish}. 
\begin{equation}
  \label{eq:23}
  \begin{gathered}
    i [A_{\alpha, \sw + 1/2}^* Q^* Q A_{\alpha, \sw + 1/2}, P]
    = A_{\alpha, \sw} Q^* \left( C_0  +  C_0^* + \sum_{j = 1}^{N}
      \alpha_j (C_{jj} + C_{jj}^*) \right) Q A_{\alpha, \sw} \\
    + \sum_{|\beta| = k, \beta \neq \alpha} A_{\alpha, \sw} Q^* C_{\alpha \beta} Q A_{\beta, \sw} +
    \sum_{|\beta| = k, \beta \neq \alpha} A_{\beta, \sw} Q^* C_{\alpha \beta}^* Q A_{\alpha, \sw}  \\ +
    A_{\alpha, \sw}^* Q^* E_{\alpha, \sw} 
   +  E_{\alpha, \sw}^* Q A_{\alpha, \sw} +   A_{\alpha, \sw + 1/2}^* i [Q^* Q , P] A_{\alpha, \sw + 1/2}
  \end{gathered}
\end{equation}
where
\begin{equation}
  \label{eq:24}
  \begin{split}
    \sigma_{\text{base},0}(C_0) \rvert_{\mathcal{R}_+} &=  -\lambda (2 \sw + 1)   \\
    \sigma_{\text{base},0}(C_{\alpha \beta}) \rvert_{\mathcal{R}_+} &= 0, \  C_{\alpha \beta} \in \Psi_{\text{sc}}^{1,0}(M),  \\
    E_{\alpha, \sw} &= x^{-\sw} E_\alpha, \quad E_\alpha \in
    \mathcal{N}^{k-1} . 
  \end{split}
\end{equation}
The key point above is that the symbol of $C_0$, arising from the $2\nu x \partial_x$ component of minus the Hamilton vector field from \eqref{ac-Hvf} hitting the $x^{-\sw}$ factor,  has a definite sign near $\mathcal{R}_+$ --- positive for $\sw$ less than the threshold exponent $-1/2$. Moreover,  the $P$-criticality of $\mathcal{N}$ means that the diagonal operators $C_{jj}$ have symbols vanishing at  $\mathcal{R}_+$ --- cf. \eqref{off diag symbols vanish}.  Similarly, the off-diagonal terms $C_{\alpha \beta}$ vanish at $\mathcal{R}_+$ due to \eqref{off diag symbols vanish}. Now, we define a matrix $C' = (C'_{\alpha \beta})$ of operators, as the indices $\alpha$, $\beta$ vary over multi-indices of length $k$, as follows: for $\alpha \neq \beta$, 
$$
C'_{\alpha \beta} = C_{\alpha \beta} + C_{\beta \alpha}^*
$$ 
 and on the diagonal, we define 
 \begin{equation}
C'_{\alpha \alpha} = C_0  +  C_0^* + \sum_{j = 1}^{N}
      \alpha_j (C_{jj} + C_{jj}^*).\label{eq:C'alphaalpha}
          \end{equation}
 Thus, due to \eqref{eq:24}, the symbol of $C'$ at $\mathcal{R}_+$ is 
 diagonal with positive entries, and it is therefore 
 positive as an matrix, provided that  the microlocal support of $Q$ is sufficiently close to $\mathcal{R}_+$. This means that we can write 
 $$
 Q ^* C' Q = Q^* (B^* B + G) Q, 
 $$
 where $B$ is a matrix of operators of order $(*,0)$ and $G$ a matrix
 of operators of order $(*, -1)$. For a function $u$ we also write
 $Au$ for $(Q A_{\alpha, \sw} u)$, regarded as a column vector
 indexed by multi-indices $\alpha$ of length $m$.
 Thus, in this compact notation we can write the first two lines on
 the RHS of  \eqref{eq:23} as $A^* (B^* B + G) A$.

 Now we follow the argument of the proof of \cite[Proposition 6.7]{HMV2004}. We let $u'$ be an element of $H_+^{*,\sw; \kappa, 0}$. We have, in matrix notation, 
 \begin{equation}\begin{gathered}
 \sum_{|\alpha| = k} \big\langle u', i [A^*_{\alpha, \sw+1/2} Q^* Q A_{\alpha, \sw+1/2}, P] u' \big\rangle \\ 
 = \| BAu' \|^2 + \ang{Au', G Au'} + \sum_{|\alpha| = k} \big( \ang{Q A_{\alpha, \sw} u', E_{\alpha, \sw} u'} + \ang{E_{\alpha, \sw} u', Q A_{\alpha, \sw} u'} \big) \\
 + \sum_{|\alpha| = k} \ang{A_{\alpha, \sw} u', F A_{\alpha, \sw} u'}. 
 \end{gathered}\end{equation}
 Here $F = [Q^* Q, P]$ has operator wavefront set in $U' \setminus U$,
 in particular, disjoint from $\mathcal{R}_+$, and for elements which
 are understood to be vectors of distributions, the inner product is
 the direct sum inner product.
 
 We may assume that there is a $\tilde Q$ microlocally equal to the identity on $\WF'(Q)$, and with $\WF'(\tilde Q) \subset \Ell(G)_3)$. We can therefore write 
 $$
 \ang{A_{\alpha, \sw} u', F A_{\alpha, \sw} u'}  =  \ang{\tilde QA_{\alpha, \sw} u', F A_{\alpha, \sw} u'} + \ang{Eu',  A_{\alpha, \sw} u'}
 $$
 where $E$ is order $(-\infty, -\infty)$. Making this substitution,
 rearranging and applying the Cauchy-Schwarz inequality, followed by
 the inequality $ab \leq \epsilon a^2 + \epsilon^{-1} b^2$, we obtain,
 for some $C$ independent of $u'$, and all norms understood to be
 $L^2$-norms unless otherwise stated,
\begin{equation}\label{eq:BA}\begin{gathered}
\| BAu' \|^2 \leq \sum_\alpha \Big| \ang{Q A_{\alpha, \sw+1/2} u', Q A_{\alpha, \sw+1/2} Pu'} \Big|  \\
+  \epsilon   C \Big(  \| Au' \|^2 + \sum_\alpha \| \tilde Q A_{\alpha, \sw} u' \|^2 \Big) \\
+ \epsilon^{-1} \Big( \| G Au' \|^2 + 2 \sum_\alpha \big( \| E_{\alpha, \sw} u' \|^2 + \|  F A_{\alpha, \sw} u' \|^2 \big) \Big) + C \| u' \|_{M,N}^2. 
\end{gathered}\end{equation}
We can treat the commutator term similarly (this is not done in \cite{HMV2004}, since there it was assumed that $Pu'$ is Schwartz). Notice that $Q - x^{-1/2} Q x^{1/2}$ is an operator of order $(0, -1)$. Combined with $A_{\alpha, \sw+1/2}$ this gives us an element of the $(k-1)$-th power of the module $\mathcal{M}_+$, which we shall write (abusing notation somewhat) as $E_{\alpha, \sw+1/2}$. Then we have 
$$
\ang{Q A_{\alpha, \sw+1/2} u', Q A_{\alpha, \sw+1/2} Pu'} = \ang{Q A_{\alpha, \sw} u', x^{-1/2} Q A_{\alpha, \sw+1/2} Pu'} + \ang{E_{\alpha, \sw} u', x^{-1/2} Q A_{\alpha, \sw+1/2} Pu'}
$$
and therefore 
$$
\Big| \ang{Q A_{\alpha, \sw+1/2} u', Q A_{\alpha, \sw+1/2} Pu'} \Big| \leq  \epsilon \| Q A_{\alpha, \sw} u' \|^2  +   \| E_{\alpha, \sw} u' \|^2 + (1 + \epsilon^{-1}) \| x^{-1/2} Q A_{\alpha, \sw+1/2} Pu' \|^2. 
$$
Summing over $\alpha$ and combining this with \eqref{eq:BA} we have 
\begin{equation}\label{eq:BA2}\begin{gathered}
\| BAu' \|^2 \leq  \epsilon \Big( 4\| Au' \|^2 + \sum_\alpha \| \tilde Q A_{\alpha, \sw} u' \|^2 \Big) \\
+ \epsilon^{-1} \Big( \| G Au' \|^2 + 3 \sum_\alpha \big( \| E_{\alpha, \sw} u' \|^2 + \|  F A_{\alpha, \sw} u' \|^2  +2 \| x^{-1/2} Q A_{\alpha, \sw+1/2} Pu' \|^2\big) \Big) + C \| u' \|_{M,N}^2. 
\end{gathered}\end{equation}
The terms proportional to $\epsilon$ can be absorbed in the LHS, up to a term of the form $C \| u' \|_{M,N}^2$. In fact, on the microlocal support of $Q$, $B$ has a microlocal inverse, that we will denote $B^{-1}$ (despite not being an actual inverse of $B$). So we have $A = B^{-1} B A + E'$, where $E'$ has order $(-\infty, -\infty)$. Then, estimating $B^{-1}$ by its operator norm, we can absorb the $\| A u' \|^2$ terms provided $\epsilon$ is small compared to $\| B^{-1} \|$, while the $E'$ term only contributes a multiple of $\|u' \|_{M,N}^2$. 

We now notice that $G A$ can be treated as being in the  $(k-1)$-th power of the module since $G$ has order $(*,-1)$. So this term, as well as the $E_{\alpha, \sw}$ term, can be estimated using the inductive assumption. Similarly, we can commute the $F$ factor to the right of the $A_{\alpha, \sw}$ (up to terms in the $(k-1)$-th power of the module) and then replace it by $Q_3'$ since it can be written $F  = F' Q_3' + E''$ for some $E''$ of spatial order $-\infty$. In exactly the same way, we can commute the $Q$ to the right of the $A_{\alpha, \sw+1/2}$ factor and then replace it with $G_3$. Similarly, on the LHS of \eqref{eq:BA2},  $Q$ can be moved to the right of the $A_{\alpha, \sw}$, and then $B$ can be removed just as for the $Au'$ term above. Moreover, as $Q$ is microlocally equal to the identity on $\WF'(Q_3)$, we can replace it with $Q_3$ on the LHS. 
After these manipulations, we obtain the estimate 
\begin{equation}\label{BAest}
\| Q_3u' \|^2_{*, \sw; 0, \smo} \leq  C \Big( \| G_3 Pu' \|_{*, \sw+1;
  0, \smo}^2 +   \| Q_3' u' \|_{*, \sw; 0 , \smo}^2 
+  \| u' \|_{M,N}^2 \Big) .
\end{equation}
Now we let $u' = u'(\eta) := (1 + \eta r)^{-1}u $, $u \in H_+^{*, \sw;
  0, \smo - 1}$, for $\eta > 0$ tending to zero. Then $u' \in H_+^{*,
  \sw; 0, \smo}$ for each $\eta > 0$, so the above computation is valid. Assuming that 
$Q_3'u \in H_+^{*, \sw; 0, \smo}$ and $G_3Pu \in H_+^{*, \sw + 1; 0, \smo}$, then the RHS of \eqref{BAest} stays bounded as $\eta \to 0$. Therefore, the LHS also stays bounded, and using the strong convergence of $(1 + \eta r)^{-1}$ to the identity as in \cite[Lemma 4.3]{HMV2004}, we see that we obtain estimate \eqref{BAest} also with $u' = u$. 

Next, we shall show a slight strengthening of \eqref{eq: low-reg mod}  for $\kappa = 0$: we shall show that 
 \begin{equation}
     \label{eq: low-reg mod-stronger-0}
     \norm[*,\sw; 1, \smo-1]{ Q_3 u } \le C \left(\norm[*,\sw; 0,\smo]{Q_3'u} + \| G_3 P u \|_{*, \sw +
     1; 0,\smo} + \norm[M,N]{u}\right), \quad \smo \geq 1.
  \end{equation}   

To show this, notice that we have already shown that $Q_3 u$ is in $H^{*, \sw; 0, \smo}_+$ with the required estimate. So it only remains to prove an estimate for the additional element $A_+ := r(D_r - \lambda)$ that is in $\mathcal{M}_+$ but not in $\mathcal{N}$. To do this, 
we write $Pu$ in the form 
$$
Pu = (D_r + \lambda) (D_r - \lambda)u + i(n-1) \frac{D_r u}{r} + r^{-2} \Omega u , 
$$
where $\Omega$ involves only tangential differentiation of order at most two, with coefficients smooth on $M$. By assumption, this is in $H^{*, \sw + 1; 0, \smo}_+$ microlocally on $\WF'(Q_3)$. We rearrange as  
$$
(D_r + \lambda) r^{-1} A_+ u = Pu -  i(n-1) \frac{D_r u}{r} - r^{-2} \Omega u , 
$$
and apply Lemma~\ref{lem:psdo-mapping}. 
On the RHS, notice that $Pu$ by assumption is in $H^{*, \sw+1; 0, \smo}_+$ microlocally on $\WF'(Q_3)$;  that $D_r u /r$ is in $H^{*, \sw+1; 0, \smo}_+$ microlocally on $\WF'(Q_3)$; and finally that $r^{-2} \Omega u$ is in 
$H^{*, \sw+1; 0, \smo-1}_+$ microlocally on $\WF'(Q_3)$ (by viewing one of the $\partial_{y_i}$ factors in $\Omega$ as being in the module $\mathcal{N}$, and a second factor, $r^{-1} \partial_{y_j}$, as in $\Psi_{\text{sc}}^{1, 0}$, leaving an additional vanishing factor $r^{-1}$). We see then that 
$$
(D_r + \lambda) r^{-1} A_+ u \in H^{*, \sw+1; 0, \smo-1}_+
$$
microlocally on $\WF'(Q_3)$. Now using the ellipticity of $D_r + \lambda$ on this set, we find that 
$r^{-1} A_+ u \in H^{*, \sw+1; 0, \smo-1}_+$, which is equivalent to $A_+ u \in H^{*, \sw; 0, \smo-1}_+$ microlocally on $\WF'(Q_3)$. Together with the fact that we have already $Q_3 u \in H^{*, \sw; 0, \smo}_+$
shows that $Q_3 u \in H^{*, \sw; 1, \smo-1}_+$, with the required estimate. 

Now we show, by induction on $\kappa$, that we have the following estimate for all $(\kappa, \smo)$ provided that $\smo \geq 1$:
\begin{equation}
     \label{eq: low-reg mod-stronger}
     \norm[*,\sw; \kappa +1, \smo-1]{ Q_3 u } \le C \left(\norm[*,\sw; \kappa,\smo]{Q_3'u} + \| G_3 P u \|_{*, \sw +
     1; \kappa,\smo} + \norm[M,N]{u}\right).
  \end{equation}   
We have already shown this for $\kappa = 0$. So given $\kappa > 0$,
assume that \eqref{eq: low-reg mod-stronger} has already been proved
for all $(\kappa', \smo)$ with $\kappa' < \kappa$. The only thing left
to prove is to show that $A_+ u \in H^{*, \sw; \kappa, \smo - 1}_+$
microlocally on $\WF'(Q_3)$, with the corresponding estimate. By
induction, this follows if we show that $A_+ u$ is in $H^{*, \sw;
  \kappa-1, \smo}_+$ microlocally on $\WF'(Q'_3)$ and $P A_+ u$ is in
$H^{*, \sw+1; \kappa-1, \smo}_+$ microlocally on $\WF'(G_3)$. The
first statement is immediate from the induction hypothesis. 
For the second, we commute $P$ and $A_+$, obtaining, for  certain constants $a, b$, 
$$
P A_+u = A_+ Pu  - 2 D_r r^{-1} A_+ u + r^{-1} (a D_r u + b u) - 2 r^{-2} \Omega u,
$$
where $\Omega$ involves only tangential differentiation of order at most two, with coefficients smooth on $M$. 
Since by assumption, $Pu \in H^{*, \sw +1; \kappa, \smo}_+$, microlocally on $\WF'(Q_3)$ (an assumption in force throughout this paragraph, but which we shall omit repeating), we have $A_+ Pu \in H^{*, \sw +1; \kappa-1, \smo}_+$. Since we already know that $u \in H^{*, \sw; \kappa, \smo}_+$, we see that the term $r^{-1} D_r A_+ u$ is in $H^{*, \sw +1; \kappa-1, \smo}_+$. Similarly the term $r^{-1} (a D_r u + b u)$  is in $H^{*, \sw+1; \kappa, \smo}_+$. 
Finally, we split the two tangential derivatives of $\Omega$ as above
to see that $r^{-2} \Omega u \in H^{*, \sw +1; \kappa-1, \smo}_+$. We
conclude that $P A_+ u$ is in $H^{*, \sw+1; \kappa-1, \smo}_+$ and
therefore $A_+ u$ is in $H^{*, \sw; \kappa, \smo-1}_+$, which combined with $u \in H^{*, \sw; \kappa, \smo}_+$ shows that $u \in H^{*, \sw; \kappa+1, \smo-1}_+$, with the corresponding estimate. The proof of \eqref{eq: low-reg mod-stronger} is complete, and immediately implies \eqref{eq: low-reg mod}.

We next turn to the proof of \eqref{eq: high-reg mod}. This works quite differently in relation to the two modules. At the incoming radial set $\mathcal{R}_-$, the module $\mathcal{M}_+$ is elliptic, while $\mathcal{N}$ is characteristic. The effect of the $\kappa$th power of the module $\mathcal{M}_+$ is thus just to increase the spatial order $\sw$ by $\kappa$. So, without loss of generality, we may assume that $\kappa=0$.

We then employ a very similar argument to the one above. Notice that,  instead of 
having $\sigma_{\text{base},0}(C_0) =  -\lambda (2 \sw + 1)$ at the radial set, as above, we now have 
$\sigma_{\text{base},0}(C_0)  =  \lambda (2 \sw + 1)$. On the other hand, now
$\sw > -1/2$, so the $2 \sw + 1$ factor has also switched sign, so
this symbol remains positive at the radial set (now
$\mathcal{R}_-$). Using the fact that the module $\mathcal{N}$ is
$P$-critical, we find that the matrix $C'$ in
this case is again positive definite at the incoming radial set. Then
we run the same argument as above, with the following twist: In
this case, the $F$ term arising from $[Q^* Q, P]$ has the same sign as
$C'$, namely it is positive, as it arises from minus the (rescaled)
Hamilton vector field $\mathsf{H}_p$ hitting $\sigma(Q)^2$. Taking
$\sigma(Q)$ to be a function only of $|\mu|_h$ at $x=0$ we see that
from \eqref{ac-Hvf} that $-\mathsf{H}_p (\sigma(Q)^2)$ is
nonnegative. Since this has the same sign as that of $C'$, to leading
order, we can discard this term up to a lower order term. This lower
order term accounts for the presence of $\| G_4 u \|_{*, \sw', \smo}$ in
the estimate; at first sight it appears that we could take $\sw'= \sw-
1/2$ (and then iterate to reduce $\sw'$ as much as we like), but the
regularization required to make the estimate hold requires that $\sw'$
is greater than the threshold value of $-1/2$. See the proof of
\cite[Proposition 5.26]{va18}, between (5.61) and (5.62), for the
details of the regularization step.

This proves the proposition for $H_+^{*, \sw; \kappa,\smo}$. For the space
$H_-^{*, \sw; \kappa,\smo}$, where the index $\kappa$ now indicates module regularity with respect to $\mathcal{M}_-$, a similar argument applies. Here, it is important that the module $\mathcal{N}$ is $P$-critical and not just $P$-positive.  In this case, the above threshold estimate is localized near $\mathcal{R}_+$.
For $\mathcal{M}_-$ regularity we proceed as in the proof of
$\mathcal{M}_+$ for $H_+^{*, \sw; \kappa,\smo}$, i.e.\ we use the ellipticity of
$\mathcal{M}_-$ elements at $\mathcal{R}_+$.  For $\mathcal{N}$
regularity, in the expression \eqref{eq:C'alphaalpha}
we have that $\sigma_{\text{base},0}(C_0) = - \lambda (2 \sw + 1)$ where
now $\sw > -1/2$ so that the symbol of $C_0$ is actually negative in
this case.  For $C'_{\alpha \alpha}$ to have a sign, we must then know
that the other terms defining it do are not too positive. The fact that $\mathcal{N}$ is $P$-critical means they
vanish, and hence the matrix $C'$ is negative definite, rather than positive definite, near $\mathcal{R}_+$.
Then all proceeds as above.
\end{proof}

\begin{cor}\label{cor:low} Suppose $-3/2 < \sw' <  \sw < -1/2$ and $\kappa \geq 1$. Then, under the same assumptions as in part (ii) of Proposition~\ref{radial point prop mod}, we have \eqref{eq: high-reg mod}. 
\end{cor}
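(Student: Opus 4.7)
The plan is to reduce to the above-threshold estimate of Proposition~\ref{radial point prop mod}(ii) by exploiting the ellipticity of the module generator $A_+ = r(D_r - \lambda)$ at the radial set $\mathcal{R}_-$. The scattering principal symbol of $A_+$ is $\nu - \lambda$, which equals $-2\lambda \neq 0$ at $\mathcal{R}_- = \{x=0,\nu=-\lambda,\mu=0\}$. Hence $A_+$, and therefore the entire module $\mathcal{M}_+$, is elliptic on a neighborhood of $\mathcal{R}_-$; after possibly shrinking $U_-$ in the hypothesis we may arrange $\WF'(G_4) \subset U_- \subset \Ell(A_+)$.

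The key technical input to establish first is a microlocal equivalence near $\mathcal{R}_-$: for any $\tilde\sw \in \mathbb{R}$, the $\mathcal{M}_+$-module space $H_+^{*, \tilde\sw; \kappa, \smo}$ agrees, modulo errors absorbable in the background space $H^{M, N}$, with the $\mathcal{N}$-regularity space $H_+^{*, \tilde\sw + \kappa; 0, \smo}$ on $\Ell(A_+)$. The forward direction follows from microlocal elliptic regularity (Proposition~\ref{elliptic reg mod}) applied to $A_+^\kappa \in \Psi_{\text{sc}}^{\kappa,\kappa}$: the condition $A_+^\kappa \mathsf{B}^\beta u \in H^{*, \tilde\sw}$ for all $|\beta| \le \smo$ yields $\mathsf{B}^\beta u \in H^{*, \tilde\sw + \kappa}$ microlocally on $\Ell(A_+)$, i.e.\ $u$ lies in $H_+^{*, \tilde\sw + \kappa; 0, \smo}$ microlocally. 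The reverse direction is a direct order count: each of the at most $\kappa + \smo$ factors in a product $\mathsf{A}^\alpha \mathsf{B}^\beta$ with $|\alpha| \le \kappa$, $|\beta| \le \smo$ has scattering order $(1,1)$, so applying such a product to an element of $H_+^{*, \tilde\sw + \kappa; 0, \smo}$ lands one at least in $H^{*, \tilde\sw}$. Both directions require absorbing commutator errors between $A_+^\kappa$ and the angular generators, which are handled by induction on $\kappa$ exactly as in the proof of Proposition~\ref{elliptic reg mod}.

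Granted the equivalence, choose auxiliary cutoffs $G_4^\flat, Q_4^\flat \in \Psi_{\text{sc}}^{0,0}$ with $\WF'(Q_4) \Subset \Ell(Q_4^\flat)$, $\WF'(Q_4^\flat) \Subset \Ell(G_4^\flat)$, $\WF'(G_4^\flat) \Subset \Ell(G_4)$, all contained in $\Ell(A_+)$. The hypotheses translate, via the equivalence, to $G_4^\flat u \in H_+^{*, \sw'+\kappa; 0, \smo}$ and $G_4^\flat Pu \in H_+^{*, \sw+1+\kappa; 0, \smo}$, with corresponding norm bounds. Since $\sw > \sw' > -3/2$ and $\kappa \ge 1$, the shifted weights satisfy $\sw + \kappa > -1/2$ and $\sw' + \kappa > -1/2$: both are strictly above threshold. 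Apply Proposition~\ref{radial point prop mod}(ii) with $\sw$, $\sw'$, $\kappa$ replaced by $\sw+\kappa$, $\sw'+\kappa$, $0$ respectively (keeping the angular order $\smo$), to obtain
\begin{equation*}
\| Q_4 u \|_{*,\, \sw+\kappa;\, 0,\, \smo} \lesssim \| G_4^\flat u \|_{*,\, \sw'+\kappa;\, 0,\, \smo} + \| G_4^\flat Pu \|_{*,\, \sw+\kappa+1;\, 0,\, \smo} + \| u \|_{M, N}.
\end{equation*}
Translating the left-hand side back via the equivalence and the right-hand side via the forward equivalence produces \eqref{eq: high-reg mod}.

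The main obstacle is the careful bookkeeping needed to establish the microlocal equivalence uniformly in the cutoffs, in particular tracking the commutators between $A_+^\kappa$ and the angular factors $\mathsf{B}^\beta$. These commutators are of strictly lower module order (or equivalently gain an order of spatial decay), so they are absorbed by the inductive hypothesis on $\kappa$, but the induction must be set up cleanly enough that the intermediate cutoffs can be chosen consistently. Once the equivalence is established, the invocation of Proposition~\ref{radial point prop mod}(ii) at module order zero is essentially immediate, and the corollary follows.
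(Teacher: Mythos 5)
Your proof is correct and follows essentially the same route as the paper: you observe that the generator $A_+ = r(D_r - \lambda)$ of $\mathcal{M}_+$ is elliptic near $\mathcal{R}_-$ (symbol $\nu - \lambda = -2\lambda \neq 0$ there), use this to trade $\kappa$ orders of $\mathcal{M}_+$-module regularity for $\kappa$ orders of spatial weight microlocally near $\mathcal{R}_-$, and then apply Proposition~\ref{radial point prop mod}(ii) at module order zero with the shifted weights $\sw + \kappa,\ \sw' + \kappa > -1/2$. This is precisely the paper's one-line argument, fleshed out with the symbol computation, the microlocal equivalence of the two scales of spaces, and the bookkeeping of commutators; the extra detail you supply is accurate and does not change the underlying mechanism.
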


\begin{proof}
This follows since the module $\mathcal{M}_+$ is elliptic at $\mathcal{R}_-$. So when $\kappa \geq 1$, the estimate \eqref{eq: high-reg mod} is equivalent to the same estimate with $\sw, \sw'$ increased by $\kappa$, and $\kappa$ set to zero. 
Under the assumption that $-3/2 < \sw' <  \sw < -1/2$ and $\kappa \geq 1$ this puts us in the range of applicability of part (ii) of Proposition~\ref{radial point prop mod}. 
\end{proof}

\subsection{Invertibility on module regularity spaces}
Our final piece of preparation for the proof of Theorem \ref{thm:Fred prop} is the following  result relating our module regularity spaces to variable order spaces.

\begin{lemma}\label{thm:containment-mod-var}
Assume $\mathsf{l}_+ \in C^\infty( \overline{{}^{\emph{sc}}T^*M})$ satisfies
\eqref{weight-radial} and \eqref{weight-monotone}.  Then
for $\epsilon < \delta$ and $\sw= -1/2 - \epsilon$,
  \begin{equation}
    \label{eq:14}
    \kappa\ge 1\ 
 \Rightarrow  H_{+}^{s,\sw; \kappa, \smo} \subset H^{s, \mathsf{l}_+}.
  \end{equation}
\end{lemma}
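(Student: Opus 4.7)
The plan is to split $u$ microlocally into a piece near the outgoing radial set $\mathcal{R}_+$, where the weights $\sw$ and $\sfl_+$ are nearly equal, and a piece away from $\mathcal{R}_+$, where the module $\mathcal{M}_+$ is elliptic and can be used to gain a full order of spatial decay.

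First I will take $\Id = Q_+ + Q' \in \Psi_{\text{sc}}^{0,0}(M)$ with $\WF'(Q_+)$ contained in a small neighborhood $U$ of $\mathcal{R}_+$ on which $\sfl_+ \equiv -1/2 - \delta$, and $\WF'(Q')$ disjoint from some $U' \Subset U$. Since $\epsilon < \delta$, on $U$ the constant weight satisfies $\sw = -1/2 - \epsilon \geq -1/2 - \delta = \sfl_+$, so the microlocal inclusion $H^{s, \sw} \hookrightarrow H^{s, \sfl_+}$ on $U$ applies to $Q_+ u \in H^{s, \sw}$, yielding $Q_+ u \in H^{s, \sfl_+}$.

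For $Q' u$, the key observation is that $\mathcal{M}_+$ is elliptic at every point of $\WF'(Q')$. Indeed, the base symbols of $A_N = r(D_r - \lambda)$ and of the angular generators $x D_{y_j}$ are $\nu - \lambda$ and $\mu_j$ respectively; these are jointly nonvanishing at every boundary point outside $\mathcal{R}_+ = \{x = 0,\ \nu = \lambda,\ \mu = 0\}$, and the interior generators $A_k'$ provide ellipticity in $\{x > c\}$. Given $\kappa \geq 1$, every $A \in \mathcal{M}_+$ satisfies $A u \in H^{s, \sw}$, so microlocal elliptic regularity (Proposition \ref{elliptic reg}), applied to a finite collection of elliptic generators covering $\WF'(Q')$ and combined via a subordinate partition of unity, gives $Q' u \in H^{s+1, \sw + 1}$. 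In particular $Q' u \in H^{s, \sw + 1}$, and since $\sw + 1 = 1/2 - \epsilon \geq -1/2 + \delta \geq \sfl_+$ everywhere (using $\epsilon + \delta \leq 1$, which we may assume), we obtain $Q' u \in H^{s, \sfl_+}$.

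Summing yields $u = Q_+ u + Q' u \in H^{s, \sfl_+}$. The main obstacle, more bookkeeping than conceptual, is setting up the ellipticity-based gain uniformly across $\WF'(Q')$: one must cover this set by finitely many microlocal neighborhoods on each of which a specific generator of $\mathcal{M}_+$ is elliptic, apply Proposition \ref{elliptic reg} on each, and patch. The weight comparisons on either side of the partition are then immediate from the hypotheses $\epsilon < \delta$ and $\kappa \geq 1$.
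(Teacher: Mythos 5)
Your proof is correct and follows essentially the same route as the paper's: a microlocal partition of the identity into a piece near $\mathcal{R}_+$ (handled by the direct weight comparison $\sw = -1/2-\epsilon > -1/2-\delta = \sfl_+$ on $U$) and a piece away from $\mathcal{R}_+$ (handled by the ellipticity of $\mathcal{M}_+$ there together with Proposition~\ref{elliptic reg} to gain one order of spatial decay, then $\sw+1 \geq \max\sfl_+$). The paper makes the preliminary reduction to $\kappa=1$, $k=0$ explicit, but your argument uses only $\kappa\ge 1$ in the same way, so the proofs coincide in substance.
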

\begin{proof}
  Since $H^{s,\sw;\kappa, \smo}_+ \subset H_+^{s,\sw;1,0} $, to show
  \eqref{eq:14}, it suffices to assume $\kappa = 1$ and $k = 0$. Let $U$ be a small neighborhood of $\mathcal{R}_+$ near which $\mathsf{l}_+ = -1/2 - \delta$, and let $V \Subset U$ be a smaller neighborhood of $\mathcal{R}_+$. For each $\q \in  \overline{{}^{\text{sc}}T^*M} \setminus V$, there is an element $A_\q$ of $\mathcal{M}_+$, elliptic on a neighborhood $U_\q$ of $\q$. Form a partition of unity subordinate to the cover of $\overline{{}^{\text{sc}}T^*M}$, consisting of $U$ and finitely many of the $U_\q$, say, $U_{\q_1}, \dots U_{\q_m}$. We take $\{Q_{\q_j}\}_{j=1} ^m, \, Q \in \Psi_{\text{sc}}^{0,0}$ to be the corresponding left quantizations of the microlocal cutoffs that comprise the partition of unity. Clearly, $Qu \in
  H^{s,\sw}$, since $u \in H^{s,\sw}$. Thus $Qu \in H^{s, \mathsf{l}_+}$ since, on $\WF'(Q)$, we have $\mathsf{l}_+ = -1/2 - \delta <
  -1/2 - \epsilon = \sw$.  
  
  On the other hand, by the assumption of module regularity, each $A_{\q_j} u \in H^{s,\sw}$. Because $A_{\q_j}$ is order $(1,1)$ and $\WF'(Q_{\q_j}) \subset \Ell (A_{q_j})$, microlocal elliptic regularity (Proposition~\ref{elliptic reg})  asserts that  $Q_{\q_j}u \in H^{s+1, \sw+ 1}$. Then we note that $H^{s+1, \sw+ 1}  \subset H^{s, \mathsf{l}_+}$ since $\sw +1 = 1/2 - \epsilon \ge -1/2 + \delta = \max \mathsf{l}_+$ for $\delta$ sufficiently small.
\end{proof}

\begin{rem} The point of this Lemma is that the different behaviour at the incoming and outgoing radial set is enforced by the module regularity instead of by a variable weight function, due to the fact that the module $\mathcal{M}_+$ is elliptic at the incoming radial set $\mathcal{R}_-$ but characteristic at the ougoing radial set $\mathcal{R}_+$. 
\end{rem}

We are now in a position to prove Theorem \ref{thm:Fred prop}. 

\begin{proof}[Proof of Theorem \ref{thm:Fred prop}] Let $s \in \RR$, $\sw \in (-3/2, -1/2)$, $\kappa \geq 1$ and $k$ be given. 
We first combine the estimates \eqref{eq: elliptic reg mod}, \eqref{eq: standard-prop mod}, \eqref{eq: low-reg mod} and \eqref{eq: high-reg mod} (in the latter case, for $\sw \in (-3/2, -1/2)$ as allowed by Corollary~\ref{cor:low}). This is done exactly as in Section~\ref{subsec:global}, so we do not repeat the argument.  We obtain, for $u \in H^{M,N}$ such that $Pu \in H^{s,\sw; \kappa,\smo}_+$, that $u \in H^{s,\sw; \kappa,\smo}_+$ and 
\begin{equation}\label{global-module-reg-est}
\| u \|_{s, \sw; \kappa,\smo} \leq C \Big(  \| Pu\|_{s-2, \sw; \kappa,\smo} + \| u\|_{M,N} \Big) .
\end{equation}
We next use Lemma~\ref{thm:containment-mod-var} to assert that $H_+^{s, \sw; \kappa, \smo} \subset
H^{s, \sfr_+}$, provided $\sw$ is sufficiently close to $-1/2$. The proof Lemma \ref{thm:containment-mod-var} may be trivially modified to also show  $H_+^{s-2, \ell ; \kappa, k} \subset H^{s-2, \sfr_+ +1 }$.
Thus we have the following diagram: 
\begin{equation}\label{eq:Setup}
	\xymatrix{ \mathcal{X}^{s, \sfr_+}\ar[r]^{P} & \mathcal{Y}^{s-2, \sfr_+ + 1} \\
	 \mathcal{X}^{s, \sw; \kappa, \smo}_+   \ar@{^(->}[u]  &
         \mathcal{Y}^{s-2, \sw + 1; \kappa , \smo}_+  \ar@{^(->}[u]  }.
\end{equation}
Our goal is to show that the restriction of $P$ to $\mathcal{X}^{s, \sw; \kappa, \smo}$ yields a bijection  $\mathcal{X}^{s, \sw; \kappa, \smo} \to \mathcal{Y}^{s-2, \sw + 1; \kappa , \smo}_+$. Injectivity follows immediately since the top row is an injective map. To show surjectivity, we suppose $f \in  \mathcal{Y}^{s-2, \sw+ 1; \kappa, 
  \smo}_+  =  H^{s-2, \sw + 1; \kappa , \smo}_+$. In, particular $f \in
H^{s - 2, \sfr_+ + 1}$. So, by surjectivity of the top row there is
$u \in \mathcal{X}^{s, \sfr_+}$ with $Pu = f$.  Then, 
thanks to \eqref{global-module-reg-est}, we see that, provided $(M,N)$
are sufficiently small, $u \in H^{s, \sw; \kappa,\smo}_+$. As a
bounded bijection, the map $\mathcal{X}^{s, \sw; \kappa, \smo}_+ \to
\mathcal{Y}^{s-2, \sw + 1; \kappa , \smo}_+$ is automatically a
Hilbert space isomorphism. \end{proof}

\section{Proof of the Main Theorem}\label{sec:proof}
In this section, we find a nonlinear eigenfunction with prescribed incoming data by finding a fixed point of the map in \eqref{Phi}, which we shall show is a contraction map on the space $\mathcal{X}_+^{2, \sw; 1, \smo}$. 
Here and below we fix $s=2$, and let $\smo$ be any integer strictly larger than $(n-1)/2$. We also put $\sw = -1/2 - \delta$, for some fixed $\delta$ with $0 < \delta \leq (4p)^{-1} \leq 1/8$.

\subsection{Linear eigenfunction}

Fix $f \in H^{\smo+2}(\p M)$.  Let $u_0$ be the unique solution to the free equation
\begin{equation*}
P(\lambda)u_0= 0,
\end{equation*}
subject to the condition that the coefficient on its incoming part at
infinity is $f$.  By \cite{RBMSpec}, if $f$ is $C^\infty$,  we have a decomposition
\begin{equation}
u_0 = r^{-(n-1)/2} (e^{- i \lambda  r} g_- + e^{i \lambda r}
g_+),\label{eq:u_f_decomp}
\end{equation}
where the $g_\pm \in C^\infty(M)$, and $g_- |_{\partial M} = f$. For
$f$ of finite regularity, this expansion only makes sense in a
distributional sense. In our case, we only use the `leading part' of
this expansion to express the linear eigenfunction $u_0$ as an element
in $H_-^{2, \sw;\mathsf{} 1, \smo}\oplus H_+^{2, \sw; 1, \smo}$. 
Thus, let 
$$
u_-(r, y)  = \chi(r) r^{-(n-1)/2} e^{- i \lambda  r} f(y) , \quad u_+ := u_0 - u_-. 
$$
Here $\chi$ is a cutoff function, supported in $r > R$ and identically equal to $1$ for $r \geq 2R$.
By inspection, we see that $u_- \in H_-^{2,\sw; 1, \smo} \cap H_-^{0,\sw; 1, \smo+2}$. Moreover, it is clear that if $\| f \|_{H^{\smo+2}(\partial M)}$ is sufficiently small, then $u_-$ is small in the norms of both these spaces. 

Moreover, by direct calculation
we have 
$$
(Pu_-) (r, y) = \tilde \chi(r) r^{-(n+3)/2} e^{- i \lambda  r} g(r, y), \quad x = r^{-1}
$$
where $\tilde\chi$ is a similar cutoff function, equal to $1$ on $\supp \chi$ and supported in $r > R$, and $g(r, y)$ is a smooth function of $r^{-1}$ with values in $H^k(\partial M)$. The key point is the gain of two powers of $r^{-1}$ as $r \to \infty$.  It follows that 
$$
Pu_- \in H_-^{0, \sw+2; 1, \smo} . 
$$
Now we want to view this as an element of $H_+^{0, \ell + 1; 1, \smo}$; to accommodate the $\mathcal{M}_+$-module regularity of order $k=1$, we lose one order of vanishing. Thus 
$$
Pu_- \in H_+^{0, \sw + 1; 1, \smo} = \mathcal{Y}_+^{0, \sw + 1; 1, \smo}.
$$
Then we claim that $u_+$ is equal to $- R(\lambda + i0) ( Pu_- )$. 
Indeed, $v := u_- - R(\lambda + i0) \big( Pu_- \big)$ solves 
$$
Pv = 0
$$
and according to Theorem~\ref{thm:Fred prop},  $R(\lambda + i0) \big( Pu_- \big)$ is in $\mathcal{X}_+^{2, \sw; 1, \smo}$; in particular, it has no incoming data at order $r^{-(n-1)/2}$ due to the module regularity at the incoming radial set ($\mathcal{M}_+$ is elliptic at this set). Thus, $v$ is the linear eigenfunction with incoming data equal to $f$, so it coincides with $u_0$ by definition. Hence $u_0 - u_- =  v - u_- = - R(\lambda + i0) ( Pu_- )$.


\subsection{Contraction mapping on $\mathcal{X}_+^{2, \sw; 1, \smo}$}

We return to the discussion of Section~\ref{subsec:strategy}. There, it was explained how finding a nonlinear eigenfunction amounts to finding a fixed point of the map $\Phi$ given by 
\begin{equation}\label{Phi2}
\Phi(w) = u_+ + R(\lambda + i0) \big( N[u_- + w] \big). 
\end{equation}

Let us check that a fixed point $w$ provides us with a nonlinear eigenfunction $u := u_- + w$. Adding $u_-$ to both sides of \eqref{Phi2}, we obtain 
\begin{equation}\label{Phi3}
\Phi(w) + u_- = w + u_- = u_+ + u_- + R(\lambda + i0) \big( N[u_- + w] \big) = u_0 + R(\lambda + i0) \big( N[u_- + w] \big).
\end{equation}
Thus, 
\begin{equation}\label{Phi4}
u =  u_0 + R(\lambda + i0) \big( N[u] \big).
\end{equation}
Now we apply $P$ to both sides. This annihilates $u_0$ and we find that 
$
Pu = N[u],
$
as claimed.

We now show that $\Phi$ is a contraction mapping on $\mathcal{X}_+^{2, \sw; 1, \smo}$, provided that $\| f \|_{H^{\smo+2}(\partial M)}$ is sufficiently small (and hence $\| u_- \|_{H_+^{2, \sw; 1, \smo}}$ is small), and provided that $w$ is small. The first thing to check is that $\Phi$ \emph{is} a mapping on this space. 

We have already seen that $u_+$ lies in this space, since $u_+$ is in $H_+^{2, \sw; 1, \smo}$ and $Pu_+ = - P u_- $ is in the space $H_+^{0, \sw + 1; 1, \smo}$ from the discussion above. 

Next, recall that the nonlinear term $N[v]$ is a product of  $\tilde p \geq p$ factors of the form $Qv$ or  $Q\overline{v}$, where $Q$ is a scattering differential operator of order $(2,0)$ (in the case of $\RR^n$ it just means a combination of the usual coordinate partial derivatives multiplied by $C^\infty(M)$ functions; see Remark~\ref{rem:clarify}). This is, therefore, a product of $\tilde p$ factors, each of which lies in $H_+^{0, \sw; 1, \smo}$. It follows that $N[u_- + w]$ is a finite sum of products of such factors. We have already seen that
$u_-$ lies in $H_-^{2, \sw; 1, \smo}$, and $w$ by assumption lies in $H_+^{2, \sw; 1, \smo}$. Also, we notice that complex conjugation is an involution between $H_-^{0, \sw; 1, \smo}$ and $H_+^{0, \sw; 1, \smo}$.
So $N[u_-+w]$ is a sum of products of factors, each of which lies in $H_-^{0, \sw; 1, \smo}$ or $H_+^{0, \sw; 1, \smo}$. 
Applying Corollary~\ref{cor:prod}, we find that the product lies in $H_+^{0, \sw'; 1, \smo}$, provided that for all $\tilde p \geq p$,  
\begin{equation}\label{rr'}
\sw' \leq \tilde p\sw + \frac{(\tilde p-1)n}{2} - 1. 
\end{equation}
We would like to know when this product is in $H^{0, \sw + 1; 1, \smo}$. This is the case provided that 
\begin{equation}\label{weight-ineq}
\sw + 1  \leq \tilde p\sw + \frac{(\tilde p-1)n}{2} - 1 \quad \Longleftrightarrow \quad 2 \leq (\tilde p-1) \sw  + \frac{(\tilde p-1)n}{2}.
\end{equation}
Since $\sw \geq -5/8$, the RHS is increasing in $\tilde p$. So it is only necessary to demand \eqref{weight-ineq} for $\tilde p = p$. Since $\sw < -1/2$, a necessary condition is that 
\begin{equation}\label{pn-req}
2 < (p-1) \frac{n-1}{2}, 
\end{equation}
which is precisely condition \eqref{pn}. When this holds, we automatically have 
\begin{equation}
5/2 \leq (p-1) \frac{n-1}{2}  \label{eq:999}
\end{equation}
since $n$ and $p$ are integers. It is straightforward to check that provided $0 < \delta \leq (4p)^{-1}$, given \eqref{eq:999}, we have \eqref{weight-ineq}, and in fact, in anticipation of Proposition~\ref{prop:expansions}, we note that  we can take $\sw' = 3/4$ in \eqref{rr'}.

Next, we verify that $\Phi$ is a contraction on a small ball in $\mathcal{X}_+^{2, \sw; 1, \smo}$, provided that the prescribed incoming data $f$ is small in $H^{\smo+2}(\partial M)$. We have
\begin{equation}\label{Ndiff}
\Phi(w_1) - \Phi(w_2) = R(\lambda + i0) \big( N[u_- + w_1] - N[u_- + w_2] \big).
\end{equation}
Since $N$ is a monomial of degree $p$, the RHS is a sum of terms the form  $Q(w_1 - w_2)$ or its complex conjugate, times a monomial of degree $p-1$ in various $Q'u_-$, $Q''w_1$ or $Q'''w_2$ or their complex conjugates, where the $Q$, $Q'$, etc., are scattering differential operators of order $(2,0)$.

Let $\eta > 0$ be a small parameter, to be chosen later. By direct calculation, we see that the map 
\begin{equation}\label{fu-}
f \mapsto u_-, \quad u_- = \chi(r) r^{-(n-1)/2} e^{- i \lambda  r} f(y),
\end{equation}
is a bounded map from $H^{\smo+2}(\partial M)$ to $H_-^{2, \sw; 1, \smo}$, so we may assume that $u_-$ is sufficiently small in this norm, say $\leq \eta$. Supposing that $w_1$ and both $w_2$ are both less than  $\eta$ in the norm  $H_+^{2, \sw; 1, \smo}$, then we find that $N[u_- + w_1] - N[u_- + w_2]$ is a finite number, say $c(p)$, of terms each of which is in $H_+^{0, \sw+1; 1, \smo}$ by Corollary~\ref{cor:prod} with norm in this space bounded by 
$$
C \| w_1 - w_2 \|_{H_+^{2, \sw; 1, \smo}} \Big(  \| u_- \|_{H_-^{2, \sw; 1, \smo}} +  \| w_1 \|_{H_+^{2, \sw; 1, \smo}}  +  \| w_2 \|_{H_+^{2, \sw; 1, \smo}} \Big)^{p-1} . 
$$
Applying $R(\lambda + i0)$, the inverse of $P$ acting between $\mathcal{Y}_+^{0, \sw + 1; 1, \smo}$ and $\mathcal{X}_+^{2, \sw ; 1, \smo}$, the norm of $\Phi(w_1) - \Phi(w_2)$ in $\mathcal{X}_+^{2, \sw ; 1, \smo}$ is at most $\| w_1 - w_2 \|_{H_+^{2, \sw; 1, \smo}}$ times $c(p) C\| R(\lambda + i0) \| (3\eta)^{p-1}$. 

It follows that provided $\eta$ is chosen small enough so that $c(p) C\| R(\lambda + i0) \| (3\eta)^{p-1}$ is strictly less than $1$, the map $\Phi$ is a contraction on the ball of radius $\eta$ centred at the origin in $\mathcal{X}_+^{2, \sw ; 1, \smo}$. By the contraction mapping theorem, we deduce the existence of a fixed point $w \in \mathcal{X}_+^{2, \sw ; 1, \smo}$. In view of the previous discussion this furnishes us with a nonlinear eigenfunction $u_- + w$. 


        
\subsection{Outgoing boundary data}
Continuing the proof of Theorem~\ref{thm:main2}, we show that $w$, the fixed point of $\Phi$ given incoming data $f$, has zero incoming boundary data and well-defined outgoing boundary data. 

\begin{prop}\label{prop:expansions}
Let $u = u_- + w$ be the nonlinear eigenfunction constructed above given $f \in H^{\smo+2}(\partial M)$. Then $u$ has an asymptotic expansion at infinity of the form
\begin{equation}
u = r^{-(n-1)/2} \Big( e^{-i\lambda r} f(y) + e^{i\lambda r} b(y) + O(r^{-\epsilon'}) \Big), \quad r \to \infty, 
\end{equation}
for some $\epsilon' > 0$, where $b \in H^{k}(\partial M)$.  
\end{prop}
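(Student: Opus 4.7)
The plan is to analyze the two terms in the decomposition
\begin{equation*}
u = u_0 + R(\lambda + i0)(N[u]),
\end{equation*}
which follows by rearranging the fixed-point identity $w = \Phi(w)$ and recalling $u_0 = u_- + u_+$, and to produce an asymptotic expansion for each term separately.

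For the linear part $u_0$: since $P u_0 = 0$ and $f \in H^{k+2}(\partial M)$, the classical asymptotic expansion of linear Helmholtz eigenfunctions due to Melrose \cite[Prop. 12]{RBMSpec} applies. In the smooth setting this yields a full polyhomogeneous expansion; in our finite-regularity setting, a straightforward density argument (approximating $f$ by smooth data, using continuity of the linear scattering map $f \mapsto g_0$) produces the leading order
\begin{equation*}
u_0 = r^{-(n-1)/2}\left(e^{-i\lambda r} f(y) + e^{+i\lambda r} g_0(y)\right) + O(r^{-(n-1)/2 - \epsilon_0}),
\end{equation*}
with $g_0 \in H^{k}(\partial M)$ and $\epsilon_0 > 0$ (the loss of two Sobolev orders from $f$ to $g_0$ mirrors the two-order smoothing of the resolvent in \eqref{eq:Fred-map}).

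For the nonlinear correction $v := R(\lambda + i0)(N[u])$: the essential input is that, by the product estimate of Corollary~\ref{cor:prod} used in the contraction argument, $N[u] \in H_+^{0, 3/4; 1, k}$, and the weight $\sw' = 3/4$ is strictly above the critical threshold $1/2$. Because $v$ lies in $\mathcal{X}_+$, it has no incoming wavefront set at $\mathcal{R}_-$. Melrose's boundary pairing formula \cite[Prop. 13]{RBMSpec}, or equivalently a finite-regularity version of the outgoing expansion at $\mathcal{R}_+$ (the "sink" radial set), then produces
\begin{equation*}
v = \chi(r) r^{-(n-1)/2} e^{+i\lambda r} b'(y) + O(r^{-(n-1)/2 - \epsilon_1}),
\end{equation*}
with $b' \in H^{k}(\partial M)$ and $\epsilon_1 > 0$. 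Setting $b := g_0 + b'$ and $\epsilon' := \min\{\epsilon_0, \epsilon_1\}$ then completes the proof.

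The main obstacle is the second step. The extraction of $b'$ at $\mathcal{R}_+$ must be carried out in spite of the fact that $v$ itself sits only at the below-threshold weight $\sw = -1/2 - \delta$ and that $N[u]$ is merely in a module-regularity Sobolev space rather than Schwartz as in the cleanest statement of Melrose's result. The key is to combine the above-threshold decay $\sw' > 1/2$ of $Pv$ with the $\mathcal{M}_+$-module regularity of $v$ already encoded in $\mathcal{X}_+^{2,\sw;1,k}$; together these imply that $e^{-i\lambda r} r^{(n-1)/2} v$ admits a trace on $\partial M$ in $H^{k}(\partial M)$, which identifies $b'$. One may alternatively carry this out by a direct boundary pairing: pair $N[u]$ against the linear outgoing solution with arbitrary test data $h \in H^{-k}(\partial M)$ and use Proposition~\ref{radial point prop mod}(ii) together with an above-threshold propagation argument to justify integrating by parts.
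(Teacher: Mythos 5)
Your plan identifies the right ingredients (the above-threshold weight $\sw'=3/4$ for $N[u]$, and the $\mathcal{M}_+$-module regularity of $w$), but it has two genuine gaps.

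First, you decompose $u=u_0+v$ rather than $u=u_-+w$ as the paper does. The advantage of the paper's split is that $u_-$ is \emph{explicit}, with exactly the required incoming asymptotics, and $w$ is the piece with full $\mathcal{M}_+$-module regularity to which the technical lemma is then applied. Your split requires you to separately establish the finite-regularity asymptotics of $u_0$, i.e.\ that for $f\in H^{\smo+2}(\partial M)$ the linear eigenfunction has an $r^{-(n-1)/2}(e^{-i\lambda r}f+e^{+i\lambda r}g_0)$ leading term with an $O(r^{-(n-1)/2-\epsilon_0})$ error. You wave at a ``density argument,'' but what must be controlled is uniformity of the remainder term across an approximating sequence $f_n\to f$, which is not immediate from Melrose's polyhomogeneous expansion for smooth data. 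This is an extra burden your split creates and you do not discharge it; the paper avoids it entirely by working with the explicit $u_-$.

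Second, and more seriously, the step you flag as ``the main obstacle'' — extracting the trace $b'=\lim_{r\to\infty} r^{(n-1)/2}e^{-i\lambda r}v(r,\cdot)$ — is not actually carried out. You assert that above-threshold decay of $Pv$ together with $\mathcal{M}_+$-regularity of $v$ ``imply that $e^{-i\lambda r}r^{(n-1)/2}v$ admits a trace,'' and you offer a boundary-pairing alternative, but neither argument is given. The paper proves a dedicated lemma here and the content is nontrivial: one splits $w=Bw+(\Id-B)w$ with a microlocal cutoff $B$ near $\mathcal{R}_+$, writes $P$ in the factored form
\begin{equation*}
P=\Big(D_r+\lambda\Big)\Big(D_r-\lambda-\tfrac{i(n-1)}{2r}\Big)+\tfrac{i(n-1)}{2r^2}\,r(D_r-\lambda)+r^{-2}Q+r^{-2}\tilde Q,
\end{equation*}
inverts the factor $D_r+\lambda$ (which is elliptic on $\WF'(B)$), uses the module regularity to absorb the tangential and $r(D_r-\lambda)$ terms, and concludes that $D_r\wtilde_+\in r^{-\epsilon'}L^1((R,\infty);H^\smo(\partial M))$, from which the existence and rate of convergence of the limit follow by integrating in $r$. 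The incoming piece $(\Id-B)w$ requires a parallel argument inverting $D_r-\lambda$, and one must then use the ellipticity of $\mathcal{M}_+$ off $\mathcal{R}_+$ to force its limit to vanish. None of this is present in your sketch, so the decisive step of the proposition is missing.
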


\begin{proof}
We know that $Pu = N[u]$, and, in view of the discussion below \eqref{pn-req}, that  
the RHS is in $H^{0, 3/4; 1,\smo}_+$. 

The proof is therefore completed by the following lemma. 
\end{proof}

\begin{lemma}
Suppose  $\sw \in (-3/2, -1/2)$, $\smo > (n-1)/2$,  and that $w \in H^{2, \sw; 1,\smo +2}_+$ satisfies the equation 
\begin{equation}\label{ueqn}
P w = F, \quad F \in H_+^{0
, 1/2 + \epsilon; 1, \smo}(M)
\end{equation}
for some $\epsilon > 0$. 
Then $\lim_{r \to \infty} r^{(n-1)/2} e^{-i\lambda r} w(r, \cdot)$ exists in $H^{\smo}(\partial M)$. Letting $b  \in H^k(\partial M)$ denote the limit, we have 
\begin{equation}\label{w-asympt}
r^{(n-1)/2} e^{-i\lambda r} w(r,\cdot) - b  =  O(r^{-\epsilon'}) \text{ in } H^{\smo}(\partial M), \quad r \to \infty
\end{equation}
for any $\epsilon' < \epsilon$. 
\end{lemma}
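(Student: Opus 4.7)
Set $\phi(r, y) := r^{(n-1)/2} e^{-i\lambda r} w(r, y)$ for $r$ large. The aim is to show that $b(y) := \lim_{r\to\infty}\phi(r, y)$ exists in $H^\smo(\partial M)$ with $\|\phi(r, \cdot) - b\|_{H^\smo(\partial M)} = O(r^{-\epsilon'})$ for any $\epsilon' < \epsilon$. By Lemma~\ref{lem:mu-change}, $e^{-i\lambda r} w \in H^{2, \sw; 1, \smo+2}_{\mathcal{M}_0}$, and the identity
\begin{equation*}
r(D_r - \lambda) w = r^{-(n-1)/2} e^{i\lambda r}\bigl[\tfrac{i(n-1)}{2}\phi - xD_x \phi \bigr]
\end{equation*}
shows that $\phi$ acquires one order of b-radial regularity (in $xD_x$) together with $\smo+2$ tangential derivatives, in a weighted $L^2$ space on $[R_0, \infty) \times \partial M$. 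Since $1 + (\smo+2) > n/2$, the corresponding b-Sobolev embedding yields that $\phi(r, \cdot)$ is continuous with values in $H^{\smo+2}(\partial M)$ and uniformly bounded as $r \to \infty$.

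Substituting $w = r^{-(n-1)/2} e^{i\lambda r}\phi$ into $Pw = F$ and expanding $\Delta_g$ in polar coordinates produces a first-order linear ODE in $r$ (with operator-valued coefficients in $y$)
\begin{equation*}
(\partial_r + 2 i\lambda)\psi = -\check F + \frac{1}{r^2} A \phi + V\phi + E\phi, \qquad \psi := \partial_r\phi,
\end{equation*}
where $\check F := r^{(n-1)/2} e^{-i\lambda r} F$, $A := \Delta_{\partial M} + \tfrac{(n-1)(n-3)}{4}$, and $E = O(r^{-3})$ collects corrections coming from the asymptotically (but not exactly) conic structure of $g$. The hypothesis $F \in H^{0, 1/2+\epsilon; 0, \smo}_+$ translates to $r^{n+2\epsilon}\|F(r, \cdot)\|_{H^\smo(\partial M)}^2 \in L^1(dr)$ near infinity, so Cauchy--Schwarz gives $\int_R^\infty \|\check F(r, \cdot)\|_{H^\smo(\partial M)}\, dr \lesssim R^{-\epsilon}$. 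The remaining right-hand side terms are uniformly $O(r^{-2})$ in $H^\smo(\partial M)$ by the boundedness from the previous paragraph.

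Multiplying by $e^{2i\lambda r}$ and integrating from $r$ to $+\infty$, the boundary term at infinity vanishes because module regularity forces $\psi = O(1/r)$. An oscillatory integration by parts (available because the $\mathcal{M}_+$-regularity of $F$ controls $\partial_r \check F$) transforms $\int_r^\infty e^{2i\lambda s} \check F\, ds$ into a boundary term plus a remainder gaining a factor of $r^{-1}$; combined with the integrated bound above, this yields $\|\psi(r, \cdot)\|_{H^\smo(\partial M)} \lesssim r^{-1-\epsilon'}$ for any $\epsilon' < \epsilon$. A final integration gives $\|\phi(r, \cdot) - b\|_{H^\smo(\partial M)} \leq \int_r^\infty \|\partial_s\phi(s, \cdot)\|_{H^\smo}\, ds \lesssim r^{-\epsilon'}$.

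The main technical obstacle is this last sharpening step: the integrated bound on $\check F$ alone is not sufficient for a naive pointwise estimate (in fact b-Sobolev embedding only gives the pointwise growth $\|\check F(r, \cdot)\|_{H^\smo} \lesssim r^{\epsilon}$), and one must exploit the oscillation $e^{2i\lambda r}$ in the ODE to gain a factor of $r^{-1}$ per integration by parts. That only a single such integration by parts is needed---and hence only the $\kappa=1$ module regularity of the hypotheses---is the microlocal reason that the stated hypotheses on $w$ and $F$ suffice for the full rate of convergence.
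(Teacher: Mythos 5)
Your approach is genuinely different from the paper's: you work directly with $\phi = r^{(n-1)/2}e^{-i\lambda r}w$ on the whole of $M$, reduce $Pw = F$ to a scalar first-order ODE in $r$ for $\psi = \partial_r\phi$, and then try to extract the rate of convergence by oscillatory integration by parts with pointwise bounds from b-Sobolev embedding. The paper instead splits $w = w_+ + w_- = Bw + (\Id-B)w$ with a microlocal cutoff $B$ near $\mathcal{R}_+$, factors $P$ so that an \emph{elliptic} factor can be inverted microlocally on the support of each piece (namely $D_r + \lambda$ on $\WF'(B)$, and $D_r - \lambda$ on $\WF'(\Id-B)$), and runs the whole estimate at the $L^2$ level, obtaining $D_r\wtilde_\pm \in r^{-1/2-\epsilon}L^2(dr; H^\smo(\partial M))$ and then invoking Cauchy--Schwarz to get $r^{-\epsilon'}L^1$. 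In addition the paper must and does show separately that the limit of $\wtilde_- = r^{(n-1)/2}e^{+i\lambda r}w_-$ vanishes, using that $\mathcal{M}_+$ is elliptic off $\mathcal{R}_+$ to upgrade $w_-$ to $H^{s,\sw+1}$ and hence above threshold decay.

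Your write-up has several concrete gaps. First, the claim that $\phi$ is \emph{uniformly bounded} in $H^{\smo+2}(\partial M)$ does not follow: with $\sw = -1/2-\delta$, the hypothesis $w \in H^{0,\sw}$ gives $r^{-\delta}\phi \in L^2(dt)$ in the b-variable $t = \log r$, so with one b-derivative in $L^2$ one only gets $\phi = O(r^\delta)$; a priori boundedness is part of what needs to be proved. Second, your quoted pointwise estimate $\|\check F(r,\cdot)\|_{H^\smo} \lesssim r^\epsilon$ is wrong by more than a full power of $r$: the correct computation (using $r^{1/2+\epsilon}\check F$ together with one b-derivative in $L^2(dr)$, i.e.\ $r^{1+\epsilon}\check F \in H^1(dt)$) gives $\check F = o(r^{-1-\epsilon})$. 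This matters because the boundary term in your integration by parts is precisely this pointwise quantity times $e^{2i\lambda r}/(2i\lambda)$; with the bound you wrote down the boundary term would blow up and the argument would fail. Third, and most importantly structurally, you do not address the fact that the parametrix $\int_r^\infty e^{2i\lambda s}(\cdots)\,ds$ is only ``elliptic'' away from $\mathcal{R}_-$: the portion of the right-hand side microsupported near $\mathcal{R}_-$ oscillates like $e^{-2i\lambda s}$ in the conjugated picture, so the phase cancels and integration by parts gains nothing. One must instead use the ellipticity of $\mathcal{M}_+$ off $\mathcal{R}_+$ to get an extra order of decay there (as the paper does explicitly for $w_-$); without this step the crucial upgrade from $\psi = O(r^{-\epsilon})$ to $\psi = O(r^{-1-\epsilon'})$ does not go through globally, only near $\mathcal{R}_+$. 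Finally, your treatment of the tangential term $r^{-2}A\phi$ is incomplete: a naive integration of an $O(r^{-2})$ term in the ODE gives only $\psi = O(r^{-1})$, which is not summable; one integration by parts brings back a $\psi$ term (through $\partial_s(s^{-2}A\phi)$), and while the extra $s^{-2}$ weight ultimately makes this harmless, the justification is missing. In short, the $L^2$-plus-Cauchy--Schwarz route the paper takes avoids pointwise Sobolev embedding entirely and never needs to address these regularity and oscillation issues.
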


\begin{rem} Since $H^{\smo}(\partial M)$ embeds into $C(\partial M)$, due to the assumption $\smo > (n-1)/2$, this also shows that we have the asymptotic \eqref{w-asympt} pointwise in $y \in \partial M$. \end{rem}

\begin{proof}
It suffices to decompose $w = w_+ + w_-$, where 
$$
r^{(n-1)/2} e^{-i\lambda r} w_+(r,\cdot) -  b =  O(r^{-\epsilon'}) \text{ in } H^{\smo}(\partial M), \quad r \to \infty
$$
and 
$$
r^{(n-1)/2} e^{+i\lambda r} w_-(r, \cdot) =  O(r^{-\epsilon'}) \text{ in } H^{\smo}(\partial M), \quad r \to \infty. 
$$
We do this by choosing a pseudodifferential cutoff $B \in
\Psi_{\text{sc}}^{0,0}$ such that $B$ is microlocally equal to the identity
near a neighbourhood $U$ of the outgoing radial set $\mathcal{R}_+$, and microlocally equal
to zero outside some slightly larger neighbourhood  $V$, i.e., for some
open neighborhoods $U \subset V$ of $\mathcal{R}_+$ we have 
$\WF'(I - B) \cap U = \varnothing, \WF'(B) \subset V$. Then we set 
\begin{equation}
w_+ = Bw, \quad w_- = (\Id - B) w.
\end{equation}
From \eqref{ueqn} we get 
\begin{equation}
P w_+ = P (Bw) = B F + [P,
B]w.\label{eq:29}
\end{equation}
We claim that the RHS is in $H_+^{0, 1/2 + \epsilon; 0, \smo}$. Certainly
this is true for the term $BF$ since $F$ is in this space 
and $B \in \Psi_{\text{sc}}^{0,0}$. For the
term $[P, B]w$, we claim that
\begin{equation}
[P, B] = r^{-2} A \mbox{ where } A \in
\mathcal{M}_+.\label{eq:28}
\end{equation}
Since $w \in H_+^{2, \sw; 1, \smo+2}$, this would imply $A w  \in H_+^{2, \sw; 0, \smo +2}$, and so $[P, B]w = r^{-2} A w \in H_+^{2, \sw + 2; 0, \smo}
\subset H_+^{0, 1/2 + \epsilon; 0, \smo}$ for  $\epsilon$ sufficiently small.  But \eqref{eq:28} follows immediately for $V$
sufficiently small since $r^2 [P, B]$ has order $(1, 1)$, and  is
microsupported in $V \setminus U$. Therefore it is characteristic at $\mathcal{R}_+$, which is a sufficient condition for an operator of order $(1,1)$ to belong to $\mathcal{M}_+$.

Write $\wtilde_+ = \chi(r) r^{(n-1)/2} e^{-i\lambda r} w_+$, where $\chi$ is supported in $r > R$ and identically $1$ near $r \geq 2R$, and we assume $R$ is sufficiently large that the support of $\chi$ is contained in a collar neighbourhood of the boundary.
Our first goal is to show that $\wtilde_+(r,y)$ has a limit $b(y)$ as $r \to \infty$, and that $\wtilde_+(r,y) - b(y) = O(r^{-\epsilon'})$. 
To do this, we write the operator $P$ in the form \eqref{P-adapted}: 
\begin{equation}\label{ueqn2}
P =  D_r^2 - \lambda^2 - \frac{i (n-1)}{r} D_r + r^{-2}
Q + r^{-2} \tilde Q, 
\end{equation}
where $Q$ is a second order differential operator involving only tangential $D_{y_j}$ derivatives, and $\tilde Q$ is a scattering differential operator of order $(1,0)$. We then apply this decomposition to \eqref{eq:29} and rearrange to obtain 
\begin{multline}\label{ueqn3}
\Big( D_r + \lambda \Big) \Big( D_r -  \lambda  - \frac{i (n-1)}{2r} \Big) w_+ \\ = BF + [P, B]w + \frac{i (n-1)}{2r^2} (r(D_r - \lambda)) w_+ + \Big(r^{-2} Q  + r^{-2} \tilde Q \Big) w_+  .
\end{multline}
Notice that the RHS is in $H_+^{0, 1/2 + \epsilon; 0, \smo}$, using assumption \eqref{ueqn} for $F$, the $\mathcal{M}_+$ module regularity of order $1$ for the term involving $r(D_r - \lambda) \in \mathcal{M}_+$, and the $\mathcal{N}$ module regularity of order $\geq 2$ for the tangential derivatives. 
Moreover, the operator $D_r + \lambda$ is elliptic everywhere except at the set $\{ \nu = -\lambda \}$; in particular, it is elliptic on $\WF'(B)$, provided that $V$ is taken sufficiently small.  Thus we may invert this operator microlocally, obtaining 
\begin{equation}\label{ueqn4}
 \Big(D_r -  \lambda - \frac{i(n-1)}{2r}  \Big) w_+  \in H_+^{0, 1/2 + \epsilon; 0, \smo}.
\end{equation}
Now, observing that 
$$
D_r \wtilde_+ = r^{(n-1)/2} e^{-i\lambda r} \Big( D_r -  \lambda -  \frac{i(n-1)}{2r}  \Big) w_+ + (D_r \chi) r^{(n-1)/2} e^{-i\lambda r}  w_+,
$$
we find that 
$$
D_r \wtilde_+ \in H_+^{0, 1/2 + \epsilon - (n-1)/2; 0, \smo} \Longleftrightarrow D_r \wtilde_+ \in  r^{n/2 - 1 - \epsilon} L^2\big( (([R, \infty), r^{n-1} dr); H^{\smo}(\partial M) \big)
$$
where we used the support property of $D_r \chi$ for the inclusion in $H_+^{0, n/2 -1 + \epsilon; 0, \smo}$ of the second term.  We can express this with respect to the measure $dr$ as follows: 
$$
D_r \wtilde_+ \in  r^{-1/2 - \epsilon} L^2\big( ([R, \infty),  dr); H^{\smo}(\partial M) \big) \subset r^{-\epsilon'} 
L^1\big( ([R, \infty),  dr); H^{\smo}(\partial M) \Big).
$$
Notice that, by assumption, $w$ is locally $H^1$ in $r$ with values in $H^k(\partial M)$, so it is therefore in $H^k(\partial M)$ for each fixed $r$. We can therefore integrate to infinity and find that 
$$
b(y) = \wtilde_+(R, y) + \int_R^\infty \partial_r \wtilde_+(r', y) \, dr'
$$
is well-defined as an element of $H^{\smo}(\partial M)$. 
Moreover, 
$$
\wtilde_+(r, y) - b(y) = -\int_r^\infty \frac{d}{dr'}  \wtilde_+(r', y) \, dr' = O_{H^{\smo}(\partial M)}(r^{-\epsilon'}). 
$$

A very similar argument can be applied to the $w_-$ term. We define 
$\wtilde_- = r^{(n-1)/2} e^{i\lambda r} w_-$ and compute as above. 
However, we switch the sign of $\lambda$ in \eqref{ueqn3} to obtain
\begin{multline}\label{ueqn5}
\Big( D_r - \lambda \Big) \Big( D_r +  \lambda  - \frac{i (n-1)}{2r} \Big) w_- \\ = (\Id - B)F - [P, B]w + \frac{i (n-1)}{2r} (D_r + \lambda) w_- + \Big(-r^{-2} \Delta_{\partial M} + r^{-1}Q'  \Big) w_-  .
\end{multline}
where $Q' = Q_1' + r^{-1}Q_2'$, $Q_i' \in \Psi_{\text{sc}}^{2, 0}, i =
1,2$ with $Q_1'$ a scattering differential operator involving only
tangential derivatives.
On the microlocal support of $\Id - B$, the module $\mathcal{M}_+$ is elliptic, hence $w_-$ is actually in $H^{2, \sw + 1; 0, \smo+2}$ in this region. So the third term on the RHS is in $H_+^{1, \sw +2 ; 0, \smo+2}$.
We see that the RHS is in $H_+^{0, 1/2 + \epsilon; 0, \smo}$ as before. 
We may assume that $D_r -  \lambda$ is elliptic on the microsupport of $\Id - B$, so we may invert it microlocally to obtain 
\begin{equation}\label{ueqn6}
\Big( D_r +  \lambda  - \frac{i (n-1)}{2r} \Big) w_- \in H_+^{0, 1/2 + \epsilon; 0, \smo}. 
\end{equation}
Now, observing that 
$$
D_r \wtilde_- = r^{(n-1)/2} e^{i\lambda r} \Big( D_r + \lambda -  \frac{i(n-1)}{2r}  \Big) w_+ + (D_r \chi) r^{(n-1)/2} e^{i\lambda r}  w_+,
$$
we find that 
$$
D_r \wtilde_- \in r^{-1/2 - \epsilon} L^2\big( (([R, \infty),  dr); H^{\smo}(\partial M) \big) \subset r^{-\epsilon'} 
L^1\big( (([R, \infty),  dr); H^{\smo}(\partial M).
$$
The rest of the argument can be followed to obtain a limit 
$$
b_-(y) = \lim_{r \to \infty} \wtilde_-(r, y) 
$$
in $H^{\smo}(\partial M)$, with
$$
\wtilde_-(r, y) - b_-( y) =  O_{H^{\smo}(\partial M)}(r^{-\epsilon'}).  
$$
This is only possible if $b_-$ vanishes identically. Indeed, otherwise
  $w_-$ would fail to be in  $H^{s, -1/2}$; however, since  $\WF'(\Id - B)
  \cap \mathcal{R}_+ = \varnothing$ and the module 
$\mathcal{M}_+$ is  elliptic off $\mathcal{R}_+$, $(\Id - B) w
\in H^{s, \sw + 1}$.  This completes the proof of the lemma. \end{proof}


\subsection{Uniqueness} 
To complete the proof of Theorem~\ref{thm:main2}, we show that the solution $u$ obtained above is unique in the following sense:

\begin{prop}\label{thm:uniqueness} Suppose that \eqref{pn} is satisfied, that $N$ satisfies the conditions of Theorem~\ref{thm:main2}, and that $f$ is sufficiently small in $H^{\smo+2}(\partial M)$, so that the proof above of the existence of a nonlinear eigenfunction $u$ with incoming data $f$  is valid. Let $u_-$ be given in terms of $f$ by \eqref{fu-}. 

Then the solution $u$ is unique in the following sense. 
  Let $u_1, u_2$ satisfy $P u_i = N[u_i]$
  and assume $u_i - u_- = w_i$ both lie in $H^{2,\sw;1,\smo}_+$.  Then there
  exists  $\eta > 0$ such that 
$$
\norm[H^{\smo+2}(\partial M)]{f}, \norm[H^{2, \sw; 1,\smo}_+]{w_1},
\norm[H^{2, \sw; 1,\smo}_+]{w_2} < \eta \implies u_1 = u_2.
$$
\end{prop}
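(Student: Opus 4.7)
The plan is to show that any such $u_i$ yields a fixed point $w_i$ of the same contraction map $\Phi$ that was used in the existence argument, and then invoke uniqueness of fixed points of a contraction on a sufficiently small ball.

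First I would verify that each $w_i = u_i - u_-$ is a fixed point of the map
\[
\Phi(w) = u_+ + R(\lambda + i0)\bigl(N[u_- + w]\bigr)
\]
acting on $H^{2,\sw;1,\smo}_+$. Since $P u_i = N[u_i]$ and $u_i = u_- + w_i$, we have
\[
P w_i = N[u_- + w_i] - P u_-.
\]
The analysis already used for the existence part shows that $P u_- \in \mathcal{Y}^{0,\sw+1;1,\smo}_+$, while Corollary \ref{cor:prod} (together with the smallness-permitted computation below \eqref{pn-req}) shows $N[u_- + w_i] \in \mathcal{Y}^{0,\sw+1;1,\smo}_+$ whenever $u_-$ and $w_i$ are small in the relevant spaces. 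Hence $P w_i \in \mathcal{Y}^{0,\sw+1;1,\smo}_+$, so $w_i \in \mathcal{X}^{2,\sw;1,\smo}_+$, and by Theorem \ref{thm:Fred prop} we may invert $P$ on these spaces to obtain
\[
w_i = R(\lambda+i0)\bigl(N[u_- + w_i]\bigr) - R(\lambda+i0)(P u_-) = u_+ + R(\lambda+i0)\bigl(N[u_- + w_i]\bigr) = \Phi(w_i).
\]

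Next I would invoke the contraction estimate established in the existence proof: choosing $\eta > 0$ small enough that $c(p)\,C\,\|R(\lambda+i0)\|\,(3\eta)^{p-1} < 1$, and recalling that the map $f \mapsto u_-$ is bounded from $H^{\smo+2}(\partial M)$ to $H^{2,\sw;1,\smo}_-$, we may further shrink $\eta$ so that $\|f\|_{H^{\smo+2}(\partial M)} < \eta$ forces $\|u_-\|_{H^{2,\sw;1,\smo}_-} \leq \eta$ as well. Under these smallness hypotheses, the computation from the existence section gives
\[
\|\Phi(w_1) - \Phi(w_2)\|_{\mathcal{X}^{2,\sw;1,\smo}_+} \leq c(p)\,C\,\|R(\lambda+i0)\|\,(3\eta)^{p-1}\,\|w_1 - w_2\|_{H^{2,\sw;1,\smo}_+} < \|w_1 - w_2\|_{H^{2,\sw;1,\smo}_+},
\]
for any $w_1, w_2$ in the closed $\eta$-ball of $H^{2,\sw;1,\smo}_+$. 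Since both $w_1$ and $w_2$ lie in this ball by hypothesis and both are fixed points of $\Phi$, taking $w_1 = \Phi(w_1)$, $w_2 = \Phi(w_2)$ and substituting yields $\|w_1 - w_2\| < \|w_1 - w_2\|$, forcing $w_1 = w_2$ and hence $u_1 = u_2$.

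The only substantive thing to check is the first step, namely that $w_i$ genuinely lies in $\mathcal{X}^{2,\sw;1,\smo}_+$ with $P w_i$ in the corresponding target space, so that Theorem \ref{thm:Fred prop} may be applied to conclude $w_i = \Phi(w_i)$; this hinges on the algebra property of Corollary \ref{cor:prod} which was already the core of the existence argument. I do not anticipate a serious obstacle, since the a priori hypothesis $w_i \in H^{2,\sw;1,\smo}_+$ is exactly what is needed to feed $w_i$ into the nonlinear term and reproduce the fixed-point formulation; uniqueness then reduces to the standard Banach fixed-point uniqueness on the ball of radius $\eta$.
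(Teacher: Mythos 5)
Your proof is correct and follows essentially the same route as the paper's: verify that each $w_i$ lies in $\mathcal{X}^{2,\sw;1,\smo}_+$ by confirming $Pw_i = N[u_-+w_i]-Pu_- \in \mathcal{Y}^{0,\sw+1;1,\smo}_+$, apply $R(\lambda+i0)$ from Theorem \ref{thm:Fred prop} to obtain $w_i = \Phi(w_i)$, and conclude by uniqueness of fixed points of the contraction $\Phi$ on the small ball. One small clarification: membership $N[u_-+w_i]\in\mathcal{Y}^{0,\sw+1;1,\smo}_+$ requires only that $u_-, w_i$ lie in the relevant spaces (via Corollary \ref{cor:prod}), not that they be small; smallness enters only in the contraction estimate.
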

\begin{proof}
Let $u_1$ and $u_2$ be nonlinear eigenfunctions as in the proposition. It suffices to show that the corresponding $w_i$ are both fixed points of the map $\Phi$, since a contraction map has only one fixed point. 

We first note that the $w_i$ are in $\mathcal{X}_+^{2, \sw; 1, \smo}$. First note that $u_- \in H_-^{2,  \sw; 1, \smo}$ and $w_i$ is by assumption in $H^{2,\sw;1,\smo}_+$, so, as we saw above, this means that $N[u_i] = N[u_- + w_i] \in H_+^{0, \sw +1; 1, \smo}$. Since $Pw_i = - Pu_- + N[u_i] \in H_+^{0, \sw +1; 1, \smo}$, this confirms that $w_i \in \mathcal{X}_+^{2, \sw; 1, \smo}$. 

Next, from 
$$
P(u_- + w_i) = N[u_i],
$$
we apply $R(\lambda + i0)$ and note that $R(\lambda + i0) Pw_i = w_i$ since $w_i \in \mathcal{X}_+^{2, \sw; 1, \smo}$, while, as we have seen, $R(\lambda + i0) P u_- = - u_+$. Therefore, 
$$
-u_+ + w_i = R(\lambda + i0) N[u_- + w_i]
$$
and this rearranges to $\Phi(w_i) = w_i$ for each $i$. The proof is complete. 
%
\end{proof}

\bibliographystyle{plain}
\bibliography{helmholtz}
\end{document}